\theoremstyle{definition}    
\newtheorem{theorem}{Theorem}   
\newtheorem*{thm}{Theorem}   
\newtheorem{lemma}{Lemma}   
\newtheorem{prop}{Proposition}
\newtheorem{example}{Example} 
\newtheorem{rem}{Remark}
\renewcommand*{\epsilon}{\varepsilon}                                   
\newcommand*{\nach}{\rightarrow}                                        
\newcommand*{\sep}{\, | \,}                                             
\newcommand*{\N}{\mathbb{N}}                                            
\newcommand*{\R}{\mathbb{R}}                                            
\newcommand*{\B}{\mathcal{B}}                                           
\renewcommand*{\S}{\mathcal{S}}                                           
\newcommand*{\A}{\mathcal{A}}                                           
\newcommand*{\AI}{\mathfrak{A}}                                         
\newcommand*{\SI}{\mathfrak{S}}                                         
\renewcommand*{\P}{\mathcal{P}}                                         
\newcommand*{\leer}{\emptyset}                                          
\newcommand*{\0}{\mathcal{O}}                                           
\newcommand*{\id}{\mathrm{id}}                                          
\renewcommand*{\l}{\ell} 				                                          
\newcommand*{\norm}[1]{\left\| #1 \right\|}                             
\newcommand*{\abs}[1]{\left| #1 \right|}                                
\newcommand*{\floor}[1]{\left\lfloor #1 \right\rfloor}                  
\newcommand*{\ceil}[1]{\left\lceil #1 \right\rceil}                     
\newcommand*{\link}[1]{(\ref{#1})}                                      
\newcommand*{\distr}[2]{\left\langle #1, #2 \right\rangle}                         
\renewcommand{\max}[1]{ \mathop{\mathrm{max}}\left\{#1\right\} }        
\renewcommand{\min}[1]{ \mathop{\mathrm{min}}\left\{#1\right\} }
\newcommand*{\spann}[1]{\mathop{\mathrm{span}}\left\{#1\right\}}        
\renewcommand{\tilde}[1]{ \widetilde{#1} }        											
\newcommand{\BIGOP}[1]{\mathop  
 {\mathchoice 
        {\raise-0.22em\hbox{\huge $#1$}} 
        {\raise-0.05em\hbox{\Large $#1$}}{\hbox{\large $#1$}}{#1}
 }}
\newcommand{\wrt}{w.r.t.\ }
\newcommand{\ie}{i.e.\ }
\title{The Complexity of linear Tensor Product Problems in (Anti-) Symmetric Hilbert Spaces\footnote{This is an extended version of a same-named paper by the author which was published in the Journal of Approximation Theory~\cite{W12}. Here all the proofs, as well as some additional assertions, are explicitly included.}}   
\author{Markus Weimar\footnote{Mathematisches Institut, Universit\"{a}t Jena, Ernst-Abbe-Platz 2, 07743 Jena, Germany. Email: markus.weimar@uni-jena.de. Web: http://users.minet.uni-jena.de/\textasciitilde{}weimar.}}
\begin{document}   
   
\maketitle   
   
\begin{abstract}   
We study linear problems $S_d$ defined on tensor products of Hilbert spaces
with an additional (anti-) symmetry property. 
We construct a linear algorithm that uses finitely many continuous linear functionals
and show an explicit formula for its worst case error in terms of the eigenvalues 
$\lambda$ of the operator $W_1=S_1^\dagger S_1$ of the univariate problem.
Moreover, we show that this algorithm is optimal with respect to a wide class of
algorithms and investigate its complexity.
We clarify the influence of different (anti-) symmetry conditions
on the complexity, compared to the classical unrestricted problem.
In particular, for symmetric problems with $\lambda_1\leq1$ we give characterizations 
for polynomial tractability and strong polynomial
tractability in terms of $\lambda$ and the amount of the assumed symmetry.
Finally, we apply our results to the approximation problem of
solutions of the electronic Schr\"{o}dinger equation.
\end{abstract}       
   
{\bf Keywords:} Antisymmetry, Hilbert spaces, Tensor Products, Complexity.


\section{Introduction}\label{sect_Intro}


In the theory of linear operators $S_d\colon H_d \nach G_d$ 
defined between Hilbert spaces
it is well-known that we often observe the the so-called 
\textit{curse of dimensionality} if we deal with $d$-fold 
tensor product problems. That is, the complexity of approximating 
the operator $S_d$ by algorithms using finitely many pieces of information 
increases exponentially fast with the dimension $d$.

In the last years there have been various approaches to break this
exponential dependence on the dimension, e.g., we can
relax the error definitions. 
Another way to overcome the curse is to introduce weights in order to
shrink the space of problem elements $H_d$. In the case of 
function spaces this approach is motivated by 
the assumption that we have some additional a priori knowledge about the
importance of several (groups of) variables.

In the present paper we describe an essentially 
new kind of a priori knowledge.
We assume the problem elements $f\in H_d$ to be 
\textit{(anti-) symmetric}.
This allows us to vanquish the curse and obtain different types of tractability. 

The problem of approximating \textit{wave functions}, 
e.g., solutions of the \textit{electronic Schr\"{o}dinger equation}, 
serves as an important example from computational chemistry and physics.
In quantum physics wave functions $\Psi$ describe quantum states 
of certain $d$-particle systems. 
Formally, these functions depend on $d$ blocks of variables $y_j$, 
which represent the spacial coordinates and certain additional intrinsic parameters, 
e.g., the \textit{spin}, of each particle within the system.
Due to the \textit{Pauli principle}, the only wave functions $\Psi$ 
which are physically admissible are antisymmetric 
in the sense that 
$\Psi(y) = (-1)^{\abs{\pi}} \Psi(\pi(y))$ for all $y$ 
and all permutations $\pi$ on a subset $I\subset\{1,\ldots,d\}$
of particles with the same spin.
Here $(-1)^{\abs{\pi}}$ denotes the sign of $\pi$.
The above relation means that $\Psi$ only changes its sign if we replace 
particles by each other which possess the same spin.
For further details on this topic we refer to
\autoref{sect_wave} of this paper and the references given there.
Inspired by this application we illustrate our results with 
some simple toy examples at the end of this section.

To this end, let $H_1$ and $G_1$ be infinite dimensional separable Hilbert spaces 
of univariate functions $f\colon D\subset\R \nach \R$ 
and consider a compact linear operator 
$S_1\colon H_1\nach G_1$ with singular values $\sigma=(\sigma_j)_{j\in\N}$.
Further, let $\lambda=(\lambda_j)_{j\in\N}=(\sigma_j^2)_{j\in\N}$ denote the 
sequence of the squares of the singular values of $S_1$. 
Finally, assume $S_d\colon H_d \nach G_d$ to be the $d$-fold tensor product problem.
We want to approximate $S_d$ by linear algorithms using a finite number 
of continuous linear functionals.

By $n^{\rm ent}(\epsilon,d)$ we denote the minimal number of information operations 
needed to achieve an approximation with worst case error 
at most $\epsilon>0$ on the unit ball of $H_d$. 
The integer $n^{\rm ent}(\epsilon,d)$ is called 
\textit{information complexity} of the \textit{entire} tensor product problem.
Further, consider the subspace of all $f\in H_d$ that are \textit{fully symmetric},
i.e.,
\begin{gather*}
			f(x)=f(\pi(x)) \quad \text{for all } x\in D^d \text{ and all permutations } \pi \text{ of } \{1,\ldots,d\}.
\end{gather*}
The minimal number of linear functionals needed 
to achieve an $\epsilon$-approximation
for this subspace is denoted by $n^{\rm sym}(\epsilon,d)$.
Finally, define the subspace of all functions $f\in H_d$ 
that are \textit{fully antisymmetric} by the condition
\begin{gather*}
			f(x)=(-1)^{\abs{\pi}} f(\pi(x)) \quad \text{for all } x\in D^d \text{ and all } \pi
\end{gather*}
and denote the information complexity with respect to this subspace 
by $n^{\rm asy}(\epsilon,d)$.

Since $H_d$ is a Hilbert space, the optimal algorithm 
for the entire tensor product problem is well-known. 
Moreover, it is known that its worst case error, 
and therefore also the information complexity,
can be expressed in terms of $\lambda$, 
\ie in terms of the squared singular values 
of the univariate problem operator $S_1$, see, e.g., 
Sections 4.2.3 and 5.2 in Novak and Wo{\'z}niakowski~\cite{NW08}.
It turns out that this algorithm, 
applied to the (anti-)~symmetric problem, 
calculates redundant pieces of information. 
Hence, it can not be optimal in this setting.

In preparation for our algorithms, \autoref{sect_antisym} is devoted 
to (anti-) symmetric subspaces in a more 
general fashion than in this introduction.
Moreover, there we study some basic properties.
In \autoref{sect_OptAlgos} we conclude formulae of algorithms 
for linear tensor product problems defined on these subspaces.
We show their optimality in a wide class of algorithms 
and deduce an exact expression for the $n$-th minimal error 
in terms of the squared singular values of $S_1$.
\autoref{theo_opt_algo} summarizes the main results.
Finally, we use this error formula to obtain 
tractability results in \autoref{sect_complexity}
and apply them to wave functions in \autoref{sect_wave}.
\vspace{5pt}

Our results yield that in any case (if we deal with the absolute error criterion)
\begin{gather*}
			n^{\rm asy}(\epsilon,d) \leq n^{\rm sym}(\epsilon,d) \leq n^{\rm ent}(\epsilon,d)
			\quad \text{for every } \epsilon>0 \text{ and all } d\in\N,
\end{gather*}
where for $d=1$ the terms coincide,
since then we do not claim any (anti-) symmetry.
To see that additional (anti-) symmetry conditions may reduce the 
information complexity dramatically consider the simple case of a linear 
operator $S_1$ with singular values $\sigma$ such that $\lambda_1=\lambda_2=1$ 
and $\lambda_j=0$ for $j\geq 3$.
Then the information complexity of the entire tensor product problem
can be shown to be
\begin{gather*}
			n^{\rm ent}(\epsilon,d)=2^d \quad \text{for all } d\in\N \text{ and } \epsilon < 1.
\end{gather*}
Hence, the problem suffers from the curse of dimensionality and is therefore 
\textit{intractable}.
On the other hand, our results show that in the fully symmetric setting we
have \textit{polynomial tractability}, because
\begin{gather*}
			n^{\rm sym}(\epsilon,d)=d+1 \quad \text{for all } d\in\N \text{ and } \epsilon < 1.
\end{gather*}
It can be proved that in this case the complexity of the fully antisymmetric
problem decreases with increasing dimension $d$ and, finally, 
the problem even gets trivial.
In detail, we have
\begin{gather*}
			n^{\rm asy}(\epsilon,d)=\max{3-d,0} \quad \text{for all } d\in\N \text{ and } \epsilon < 1,
\end{gather*}
which yields \textit{strong polynomial tractability}.

Next, let us consider a more challenging problem where 
$\lambda_1=\lambda_2=\ldots=\lambda_m=1$ and 
$\lambda_j=0$ for every $j>m\geq2$. 
For $m=2$ this obviously coincides with the example studied above, 
but letting $m$ increase may tell us more about the structure
of (anti-) symmetric tensor product problems.
In this situation it is easy to check that
\begin{gather*}
			n^{\rm ent}(\epsilon,d)	= m^d 
			\quad \text{and} \quad 
			n^{\rm asy}(\epsilon,d) = \begin{cases}
																		\binom{m}{d}, & d \leq m\\
																		0, 						& d > m,
																\end{cases}
			\quad \text{for every $d\in\N$ and all $\epsilon < 1$.}
\end{gather*}
Since $\binom{m}{d}\geq 2^{d-1}$ for $d\leq\floor{m/2}$,
this means that for large $m$ the complexity in the antisymmetric case 
increases exponentially fast with $d$ up to a certain maximum.
Beyond this point it falls back to zero.
The information complexity in the symmetric setting is much harder to
calculate for this case. 
However, it can be seen that we have polynomial tractability, but
$n^{\rm sym}(\epsilon,d)$ needs to grow at least linearly with $d$
such that the symmetric problem can not be strongly polynomially tractable,
whereas this holds in the antisymmetric setting. 
The entire problem again suffers from the curse of dimensionality.

The reason why antisymmetric problems are that much easier than 
their symmetric counterparts is that from the antisymmetry condition
it follows that $f(x)=0$ if there exist coordinates $j$ and $l$ such that
$x_j=x_l$. 
Another explanation for the good tractability behavior
of antisymmetric tensor product problems
might be the \textit{initial error} $\epsilon_d^{\rm init}$.
For every choice of $\lambda$ it tends to zero as $d$ grows, 
what is not necessarily the case for the corresponding entire 
and the symmetric problem, respectively.
In fact, we have
\begin{gather*}
		\epsilon^{\rm init}_{d,\rm ent} = \epsilon^{\rm init}_{d, \rm sym} = \lambda_1^{d/2}, \quad \text{ whereas} \quad \epsilon^{\rm init}_{d, \rm asy} = \prod_{j=1}^d \lambda_j^{1/2}.
\end{gather*}

For a last illustrative example consider the case $\lambda_1=1$
and $\lambda_{j+1}=j^{-\beta}$ for some $\beta\geq0$ and all $j\in\N$.
That means that we have the two largest singular values $\sigma_1=\sigma_2$ of $S_1$ equal to one.
The remaining series decays like the inverse of some polynomial.
If $\beta=0$ the operator $S_1$ is not compact, 
since $(\lambda_m)_{m\in\N}$ does not tend to zero. 
Hence, all the information complexities are infinite in this case.
For $\beta>0$, any $\delta>0$ and some $C>0$ it is
\begin{gather*}
			n^{\rm ent}(\epsilon,d)\geq 2^{d}, \quad n^{\rm sym}(\epsilon,d) \geq d+1
			\quad \text{and} \quad n^{\rm asy}(\epsilon,d)\leq C \epsilon^{-(2/\beta+\delta)},
			\quad \text{for all } \epsilon<1, d\in\N.
\end{gather*}
Thus, again for the entire problem we have the curse, whereas the
antisymmetric problem is strongly polynomially tractable.
Once more, the symmetric problem can shown to be polynomially tractable.
Note that in this example the antisymmetric case is not trivial,
because all $\lambda_j$ are strictly positive.
If we replace $j^{-\beta}$ by $\log^{-1}(j+1)$ in this example we obtain
(polynomial) intractability even in the antisymmetric setting.

Altogether these examples show that exploiting an a priori knowledge
about (anti-) symmetries of the given tensor product problem can
help to obtain tractability, but it does not make the problem trivial
in general. 
We conclude the introduction with a partial summary 
of our main complexity results.

\begin{thm}
			Let $\lambda=(\lambda_m)_{m\in\N}$ denote the non-increasing sequence 
			of the squared 		singular values of $S_1\colon H_1 \nach G_1$ 
			and assume $\lambda_2>0$.
			Then for the information complexity of (anti-) symmetric 
			linear tensor product problems $S_d$ we obtain the following characterizations:
			\begin{itemize}
					\item The \textit{fully symmetric} problem is strongly 
								polynomially tractable \wrt 
								the normalized error criterion iff $\lambda\in\l_\tau$ 
								for some $\tau>0$ and $\lambda_1>\lambda_2$.
								Furthermore, in the case $\lambda_1\leq 1$ the problem is strongly polynomially tractable 
								\wrt the absolute error criterion iff 
								$\lambda\in\l_\tau$ and $\lambda_2<1$.
					\item The \textit{fully antisymmetric} problem is strongly 
								polynomially tractable \wrt 
								the absolute error criterion iff $\lambda\in\l_\tau$ 
								for some $\tau>0$.
			\end{itemize}
\end{thm}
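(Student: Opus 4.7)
\emph{Strategy.} By \autoref{theo_opt_algo} the squared worst case error of the optimal algorithm for the (anti)symmetric $d$-fold tensor product problem is the $(n+1)$-st entry of the non-increasing rearrangement of the multisets
\begin{gather*}
\Lambda_d^{\rm sym}=\Bigl\{\prod_{k\geq 1}\lambda_k^{m_k}\,\Big|\,(m_k)_k\subset\N_0,\;\sum_k m_k=d\Bigr\},\\
\Lambda_d^{\rm asy}=\{\lambda_{i_1}\cdots\lambda_{i_d}\mid 1\leq i_1<\cdots<i_d\},
\end{gather*}
respectively. Under the absolute criterion $n(\epsilon,d)=\#\{\mu\in\Lambda_d\mid\mu>\epsilon^2\}$, so the Markov-type estimate $n(\epsilon,d)\,\epsilon^{2\tau}\leq\sum_{\mu\in\Lambda_d}\mu^\tau$ reduces SPT to a uniform-in-$d$ bound on the right-hand side for some $\tau>0$. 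That $\lambda\in\l_\tau$ is necessary for SPT will follow from the case $d=1$, where the problem coincides with the unrestricted univariate one and SPT with exponent $p$ forces $\lambda_n=\mathcal{O}(n^{-2/p})$, hence $\lambda\in\l_\tau$ for every $\tau>p/2$.

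\emph{Antisymmetric case.} The $\tau$-th power sum over $\Lambda_d^{\rm asy}$ is the $d$-th elementary symmetric function $e_d(\lambda^\tau)$. The multinomial bound $d!\,e_d(x)\leq\bigl(\sum_j x_j\bigr)^d$ combined with $a^d/d!\leq\exp{a}$ yields $e_d(\lambda^\tau)\leq\exp{\sum_j\lambda_j^\tau}$ uniformly in $d$, so $\lambda\in\l_\tau$ immediately implies SPT with exponent $p=2\tau$. Together with the $d=1$ converse from the previous paragraph, this completes the antisymmetric characterization.

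\emph{Symmetric case.} Splitting a multi-index $(m_k)$ with $\sum m_k=d$ according to the value of $m_1$ and using $\lambda_1^{\tau m_1}\leq 1$ (valid because $\lambda_1\leq 1$) gives
\begin{gather*}
\sum_{\mu\in\Lambda_d^{\rm sym}}\mu^\tau=\sum_{m_1=0}^d\lambda_1^{\tau m_1}\,h_{d-m_1}(\lambda_2^\tau,\lambda_3^\tau,\ldots)\leq\sum_{\ell=0}^\infty h_\ell(\lambda_2^\tau,\ldots)=\prod_{k\geq 2}(1-\lambda_k^\tau)^{-1},
\end{gather*}
where $h_\ell$ is the complete homogeneous symmetric function of degree $\ell$. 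The product converges precisely when $\lambda_2<1$ and $\lambda\in\l_\tau$, which gives sufficiency under the absolute criterion. The normalized criterion reduces to the absolute one by rescaling $\tilde\lambda:=\lambda/\lambda_1$: the symmetric eigenvalues all scale by $\lambda_1^d$, so the normalized error for $\lambda$ equals the absolute error for the problem with squared singular values $\tilde\lambda$. Since $\tilde\lambda_1=1$, the previous criterion becomes $\tilde\lambda_2<1$ and $\tilde\lambda\in\l_\tau$, i.e., $\lambda_1>\lambda_2$ and $\lambda\in\l_\tau$.

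\emph{Main obstacle.} The delicate point is the necessity of the \emph{strict} inequalities, which summability bounds alone do not capture. I would exhibit concrete multiplicities: the $d+1$ multisets $(d-k,k,0,0,\ldots)$, $k=0,\ldots,d$, produce eigenvalues $\lambda_1^{d-k}\lambda_2^k$ which all coincide whenever $\lambda_1=\lambda_2$ (normalized criterion; common value $\lambda_1^d$) or $\lambda_1=\lambda_2=1$ (absolute criterion under $\lambda_1\leq 1$; common value $1$). In either case one obtains $n^{\rm sym}(\epsilon,d)\geq d+1$ for every $\epsilon<1$, contradicting any bound independent of $d$. Combined with the $d=1$ reduction for the $\l_\tau$-part, this closes both converses and yields the stated characterizations.
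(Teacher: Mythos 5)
Your proof is correct and takes a genuinely different, more algebraic route than the paper. Where the paper proves the summary theorem as a corollary of a modified Novak--Wo\'zniakowski criterion (\autoref{prop_NW}) together with an inductive technical lemma (\autoref{lemma_symNEW}) for the symmetric power sums and a factorial bound combined with a logarithmic growth condition~\link{suf_condition} for the antisymmetric ones, you observe directly that the $\tau$-th power sum over the antisymmetric eigenvalues is the elementary symmetric polynomial $e_d(\lambda^\tau)$ and over the symmetric eigenvalues is the complete homogeneous $h_d(\lambda^\tau)$, and then use the closed-form identities $e_d(x)\leq \frac{1}{d!}\bigl(\sum_j x_j\bigr)^d\leq \exp{\sum_j x_j}$ and $\sum_{\ell\geq 0} h_\ell(x)=\prod_k(1-x_k)^{-1}$. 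The former makes the uniform-in-$d$ bound in the antisymmetric case one line, where the paper phrases the same $1/d!$ estimate in terms of a condition $\ln(d!)/d\geq\ln\norm{\lambda\sep\l_\tau}^\tau$ that must then be verified for $d$ large; the latter replaces \autoref{lemma_symNEW}'s induction entirely, and the condition $\lambda_2<1$ (or $\lambda_2<\lambda_1$ after normalization) appears transparently as the convergence radius of the geometric product. Your $\ell_\tau$-necessity via the unrestricted $d=1$ problem is also simpler than the paper's cube argument in \autoref{prop_general}, which is built to handle partially symmetric problems with $\#I_d<d$; since the summary theorem only concerns the fully (anti-)symmetric cases, the $d=1$ shortcut suffices. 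The trade-off is generality: the paper's machinery covers arbitrary $I_d$ (and hence the conditions $b_d\in\0(1)$ or $b_d\in\0(\ln d)$ in Theorems 2--5), whereas the symmetric-function identities exploit precisely the full symmetry. The multiplicity counterexamples you give for the strictness of $\lambda_1>\lambda_2$ resp.\ $\lambda_2<1$ match the paper's argument in spirit and are complete.
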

\noindent In contrast, it is known, see Novak and Wo{\'z}niakowski \cite{NW08}, that
\begin{itemize}
					\item	the \textit{entire} tensor product problem is 
								never (strongly) polynomially tractable \wrt to normalized error criterion.
								Moreover, the problem is strongly 
								polynomially tractable \wrt 
								the absolute error criterion iff $\lambda\in\l_\tau$ 
								for some $\tau>0$ and $\lambda_1<1$.
\end{itemize}


\section{Spaces with (anti-) symmetry conditions}\label{sect_antisym}


Motivated by the example of wave functions in \autoref{sect_Intro},
we mainly deal with \textit{function spaces} in this section.
To this end, we start by defining (anti-) symmetry properties 
for functions which will lead us to orthogonal projections, 
mapping the function space onto its
subspace of (anti-) symmetric functions.
It will turn out that these projections applied to a given basis 
in the tensor product Hilbert function space lead us 
to handsome formulae for orthonormal bases of the subspaces.
In a final remark we generalize our approach 
and define (anti-) symmetry conditions 
for \textit{arbitrary} tensor product Hilbert spaces 
based on the deduced results for function spaces.
\vspace{5pt}

We use a general approach to (anti-) symmetric functions, 
as it can be found in Section~2.5 of Hamaekers~\cite{H09}.
Therefore, for a moment, consider an abstract separable Hilbert space $F$ 
of real-valued functions defined on a domain $\Omega \subset \R^d$. 
In this part of the paper let $d \geq 2$ be fixed. 
The inner product on $F$ is denoted by $\distr{\cdot}{\cdot}_F$.
Moreover, let $I=I(d) \subset \{1,\ldots,d\}$  
be an arbitrary given non-empty subset of coordinates.
Then we define the set
\begin{gather*}
				\S_I = \{ \pi \colon \{1,\ldots,d\} \nach \{1,\ldots,d\} \sep \pi \text{ bijective and } \pi \big|_{\{1,\ldots,d \} \setminus I} = \id\}
\end{gather*}
of all permutations on $\{1,\ldots,d\}$ that leave the complement of $I$ fixed. 
Obviously, the cardinality of this set is given by $\# \S_I = (\# I)!$, 
where $\#$ denotes the number of elements of a set.
For a given $\pi \in \S_I$ we define the mapping
\begin{gather*}
				\pi' \colon \Omega \nach \R^d, \quad x=(x_1,\ldots,x_d) \mapsto \pi'(x)=(x_{\pi(1)}, \ldots, x_{\pi(d)}).
\end{gather*}
To abbreviate the notation we identify $\pi$ and $\pi'$ with each other.

For an appropriate definition of partial (anti-) symmetry of functions $f\in F$ 
we need the following simple assumptions. For every $\pi \in \S_I$ we assume
\begin{enumerate}[label=(A\arabic*)]
				\item \label{A1} $x\in \Omega$ implies $\pi(x)\in\Omega$,
				\item \label{A2} $f\in F$ implies $f(\pi(\cdot)) \in F$ and
				\item \label{A3} there exists $c_{\pi} \geq 0$ (independent of $f$) such that $\norm{f(\pi(\cdot)) \sep F} \leq c_{\pi} \norm{f \sep F}$.
\end{enumerate}
Note that these assumptions always hold if $F$ is a $d$-fold 
tensor product Hilbert space $H_d=H_1 \otimes\ldots\otimes H_1$ equipped with a cross norm,
as described in the examples of the previous section.

Now we call a function $f \in F$ \textit{partially symmetric with respect to $I$} 
(or \textit{$I$-symmetric} for short) if a permutation $\pi\in\S_I$ 
applied to the argument $x$ does not affect the value of $f$. 
Hence,
\begin{gather}\label{sym}
				f(x)=f(\pi(x)) \quad \text{for all} \quad x\in \Omega \quad \text{and every} \quad \pi \in \S_I.
\end{gather}

Moreover, we call a function $f \in F$ 
\textit{partially antisymmetric with respect to $I$}
(or \textit{$I$-antisymmetric}, respectively) 
if $f$ changes its sign by exchanging the variables 
$x_i$ and $x_j$ with each other, where $i,j\in I$.
That is, we have 
\begin{gather}\label{antisym}
				f(x)=(-1)^{\abs \pi} f(\pi(x)) \quad \text{for all} \quad x\in \Omega \quad \text{and every} \quad \pi \in \S_I,
\end{gather}
where $\abs \pi$ denotes the \textit{inversion number} of the permutation $\pi$. 
The term $(-1)^{\abs{\pi}}$ therefore coincides with the \textit{sign}, 
or \textit{parity of $\pi$} and is equal to the determinant 
of the associated permutation matrix.
In the case $\#I=1$ we do not claim any (anti-) symmetry, 
since the set $\S_I = \{ \id \}$ is trivial.
For $I=\{1,\ldots, d\}$ functions $f$ which satisfy 
\link{sym} or \link{antisym}, respectively, are called 
\textit{fully (anti-) symmetric}.

Note that, in particular, formula \link{antisym} yields that 
the value $f(x)$ of (partially) antisymmetric functions $f$ 
equals zero if $x_i=x_j$ with $i \neq j$ and $i,j\in I$. 
For (partially) symmetric functions such an implication does not hold.
Therefore, the (partial) antisymmetry property is a somewhat more restrictive condition 
than the (partial) symmetry property with respect to the same subset $I$.
As we will see in the next sections this will also affect our complexity estimates.

Next, we define the so-called \textit{symmetrizer $\SI_I^F$} and 
\textit{antisymmetrizer $\AI_I^F$ on $F$ with respect to the subset $I$} by
\begin{gather*}
				\SI_I^F \colon F \nach F, \quad f\mapsto \SI_I^F (f) = \frac{1}{\#\S_I} \sum_{\pi \in \S_I} f(\pi(\cdot))
\end{gather*}
and
\begin{gather*}
				\AI_I^F \colon F \nach F, \quad f\mapsto \AI_I^F (f) = \frac{1}{\#\S_I} \sum_{\pi \in \S_I} (-1)^{\abs{\pi}} f(\pi(\cdot)).
\end{gather*}
If there is no danger of confusion we use the notation 
$\SI_I$ and $\AI_I$ instead of $\SI_I^F$ and $\AI_I^F$, respectively.
The following lemma collects together some basic properties which can be proved easily.
For details see the appendix of this paper.

\begin{lemma}\label{projection}
				Both the mappings $P_I\in\{\SI_I, \AI_I\}$ 
				define bounded linear operators on $F$ with $P_I^2=P_I$. 
				Thus, $\SI_I$ and $\AI_I$ provide orthogonal projections 
				of $F$ onto the closed linear subspaces
				\begin{gather}\label{antisymsubspace}
								\SI_I(F) = \{ f \in F \sep f \text{ satisfies } \link{sym} \}  \quad \text{and} 
								\quad	\AI_I(F) = \{ f \in F \sep f \text{ satisfies } \link{antisym} \} 
				\end{gather}
				of all partially (anti-) symmetric functions 
				(\wrt $I$) in $F$, respectively. 
				Hence, 
				\begin{gather}\label{orth_decomp}
								F = \SI_I(F) \oplus (\SI_I(F))^\bot = \AI_I(F) \oplus (\AI_I(F))^\bot.
				\end{gather}				
\end{lemma}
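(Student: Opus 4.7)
The plan is to verify, in sequence, the four properties hidden in the statement: linearity, boundedness, idempotence $P_I^2=P_I$, and finally that $P_I$ is an \emph{orthogonal} projection onto the claimed subspace. Throughout I write $U_\pi$ for the pullback operator $f\mapsto f(\pi(\cdot))$, which by \ref{A1}, \ref{A2}, \ref{A3} is a well-defined bounded linear operator on $F$.

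Linearity of both $\SI_I$ and $\AI_I$ is immediate, since each is a finite linear combination of the $U_\pi$ and each $U_\pi$ is pointwise linear. Boundedness drops out of \ref{A3} and the triangle inequality:
\[
\norm{P_I f \sep F}\;\leq\;\frac{1}{\#\S_I}\sum_{\pi\in\S_I} c_\pi\,\norm{f\sep F},
\]
so $\norm{P_I}\leq \frac{1}{\#\S_I}\sum_\pi c_\pi$. For the idempotence $P_I^2=P_I$ I would exploit the group structure of $(\S_I,\circ)$: for each fixed $\pi$ the map $\sigma\mapsto\sigma\circ\pi$ is a bijection of $\S_I$. Expanding $\SI_I(\SI_I f)$ as a double sum and reindexing $\tau:=\sigma\circ\pi$ collapses one of the two averages and yields $\SI_I f$. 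The analogous computation for $\AI_I$ additionally uses that the sign is a group homomorphism, so $(-1)^{\abs{\sigma\circ\pi}}=(-1)^{\abs\sigma}(-1)^{\abs\pi}$, which is exactly what makes the double sum factor properly.

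Next, I identify the range of $P_I$ with the right-hand side of \link{antisymsubspace}. The inclusion $\mathrm{ran}(\SI_I)\subseteq\{f:\link{sym}\}$ (resp.\ for $\AI_I$) is again a re-labelling argument: applying a further permutation $\rho$ to the argument of $\SI_I f$ merely permutes the summation index. Conversely, every $f$ already satisfying \link{sym} (resp.\ \link{antisym}) is a fixed point of $P_I$, since every summand equals $f$ (resp.\ $(-1)^{2\abs\pi}f=f$). Combined with idempotence, this shows that $P_I$ is a bounded idempotent whose range equals the closed subspace on the right of \link{antisymsubspace}; closedness follows from the range being the kernel of $\id-P_I$.

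Finally, to upgrade from a bounded idempotent to an \emph{orthogonal} projection, I would establish self-adjointness $P_I^*=P_I$. The key ingredient is that each $U_\pi$ is unitary on $F$ with $U_\pi^*=U_{\pi^{-1}}$ — this is where the remark immediately after \ref{A3} is used, since on tensor product Hilbert spaces with a cross norm the permutation action is precisely the tensor-factor swap, which is unitary. Because $\pi\mapsto\pi^{-1}$ is a bijection on $\S_I$ preserving the sign, taking adjoints inside the averaging sum reproduces the same average, giving $P_I^*=P_I$. The orthogonal decomposition \link{orth_decomp} is then the standard Hilbert space identity $F=\mathrm{ran}(P_I)\oplus\ker(P_I)$ applied to a self-adjoint projection. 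The step I expect to require the most care is this last one: the abstract hypotheses \ref{A1}–\ref{A3} only deliver boundedness of $U_\pi$, and the \emph{isometric} (indeed unitary) nature of $U_\pi$ has to be imported from the cross-norm tensor-product structure that underlies the applications of interest in the paper.
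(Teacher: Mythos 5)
Your proposal is correct and tracks the paper's own proof closely for the first three stages: linearity, boundedness via (A3), idempotence via the group structure of $\S_I$, and the identification of the range with the right-hand side of \link{antisymsubspace}. Your idempotence argument is a direct double-sum reindexing $\tau=\sigma\circ\pi$; the paper instead first shows $\mathrm{ran}(P_I)$ lies in the $I$-(anti-)symmetric subspace and then that such functions are fixed points of $P_I$, and deduces $P_I^2=P_I$ from that. These are the same algebraic fact packaged differently, so the substance is identical.

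Where you genuinely go beyond the paper is the final step. The appendix argument passes from ``$P_I$ is a bounded idempotent with closed range'' directly to the assertion that $(\id-P_I)(F)$ is the \emph{orthogonal} complement $(P_I(F))^\bot$, without ever verifying $P_I^*=P_I$. You correctly isolate self-adjointness as the missing ingredient, derive it from each pullback $U_\pi$ being unitary with $U_\pi^*=U_{\pi^{-1}}$ together with the sign being invariant under $\pi\mapsto\pi^{-1}$, and flag that unitarity is \emph{not} a consequence of (A1)--(A3) alone but is supplied by the cross-norm tensor-product structure the paper invokes right after stating (A3). That caveat is exactly right: under (A1)--(A3) alone one only obtains a bounded (not necessarily orthogonal) idempotent, and the decomposition $F=P_I(F)\oplus(P_I(F))^\bot$ then holds merely because $P_I(F)$ is a closed subspace of a Hilbert space, not because $\ker P_I$ coincides with that orthogonal complement. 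Your version of the proof is therefore slightly more careful than the paper's own on the one point that actually earns the word ``orthogonal'' in the lemma.
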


Note that the notion of partially (anti-) symmetric functions 
can be extended to more than one subset $I$.
Therefore, consider two non-empty subsets of coordinates 
$I,J\subset \{1,\ldots,d\}$ with $I\cap J = \leer$.
Then we call a function $f\in F$ 
\textit{multiple partially (anti-) symmetric with respect to $I$ and $J$} 
if $f$ satisfies \link{sym}, or \link{antisym}, 
respectively, for $I$ and $J$.
Since $I$ and $J$ are disjoint we observe that $\pi \circ \sigma = \sigma \circ \pi$
for all $\pi\in \S_I$ and $\sigma\in \S_J$.
Thus, the linear projections $P_I \in \{\SI_I, \AI_I\}$ and 
$P_J \in \{\SI_J, \AI_J\}$ commute on $F$, \ie
$P_I \circ P_J = P_J \circ P_I$.

Further extensions to more than two disjoint subsets of coordinates are possible.
We will restrict ourselves to the case of at most two (anti-) symmetry conditions, 
because in particular wave functions can be modeled as functions
which are antisymmetric with respect to $I$ and 
and $J=I^C$, where $I^C$ denotes the complement of $I$ in $\{1, \ldots, d\}$; 
see, e.g., \autoref{sect_wave} of this paper.
\vspace{5pt}

Up to this point the function space $F$ was an arbitrary separable 
Hilbert space of \mbox{$d$-variate} real-valued functions.
Indeed, for the definition of (anti-) symmetry 
we did not claim any product structure.
On the other hand, 
it is also motivated by applications to consider tensor product function spaces;
see, e.g., Section 3.6 in Yserentant \cite{Y10}.
In detail, it is well-known that so-called spaces of dominated mixed smoothness, 
e.g. $W_2^{(1,\ldots,1)}(\R^{3d})$, can be represented as certain tensor products; 
see Section 1.4.2 in Hansen \cite{H10}. 

Nevertheless, if we take into account such a structure, 
i.e., assume $F=H_d=H_1\otimes \ldots \otimes H_1$ ($d$ times),
where $H_1$ is a suitable Hilbert space of functions $f\colon D\nach\R$,
it is known that we can construct an orthonormal basis (ONB) of $F$
out of a given ONB of $H_1$.
In fact, if $\{\eta_i \sep i\in\N\}$ is an ONB of the underlying Hilbert
function space $H_1$ then the set of all $d$-fold tensor products
$\{\eta_{d,j} = \bigotimes_{l=1}^d \eta_{j_l} \sep j=(j_1,\ldots,j_d)\in\N^d\}$,
\begin{gather*}
			\eta_{d,j}(x) = \prod_{l=1}^d \eta_{j_l}(x_l), \quad x=(x_1,\ldots,x_d)\in D^d,
\end{gather*}
is mutually orthonormal in $H_d$ and forms
a basis.
To exploit this representation 
we start with a simple observation.
 
Let $j\in \N^d$ and $x\in D^d$, as well as a 
non-empty subset $I$ of $\{1,\ldots,d\}$ be arbitrarily fixed.
If we define $\sigma = \pi^{-1} \in S_I$ then
\begin{eqnarray}
				(\AI_I \eta_{d,j})(x) &=& \frac{1}{\#\S_I} \sum_{\pi \in \S_I} (-1)^{\abs{\pi}} \eta_{d,j}(\pi(x)) 
				= \frac{1}{\#\S_I} \sum_{\pi \in \S_I} (-1)^{\abs{\pi}} \prod_{m=1}^d \eta_{j_m}(x_{\pi(m)}) \nonumber\\
				&=& \frac{1}{\#\S_I} \sum_{\pi \in \S_I} (-1)^{\abs{\pi}} \prod_{m=1}^d \eta_{j_{\sigma(m)}}(x_m)
				= \frac{1}{\#\S_I} \sum_{\pi \in \S_I} (-1)^{\abs{\sigma^{-1}}} \eta_{d,\sigma(j)}(x) \label{pi_inside}\\
				&=& \frac{1}{\#\S_I} \sum_{\sigma \in \S_I} (-1)^{\abs{\sigma}} \eta_{d,\sigma(j)}(x). \nonumber
\end{eqnarray}
For simplicity, once again we identified $\pi(j)=\pi(j_1,\ldots,j_d)$ 
with $(j_{\pi(1)},\ldots,j_{\pi(d)})$ for multi-indices $j\in \N^d$.
Obviously, the same calculation can be made 
for $\SI_I$ without the factor $(-1)$. 
Since $x\in D^d$ was arbitrary we obtain
\begin{gather}\label{antisym_basis}
				\SI_I \eta_{d,j} = \frac{1}{\#\S_I} \sum_{\sigma \in \S_I} \eta_{d,\sigma(j)}	
				\quad \text{and} \quad \AI_I \eta_{d,j} = \frac{1}{\#\S_I} \sum_{\sigma \in \S_I} (-1)^{\abs{\sigma}} \eta_{d,\sigma(j)}
				\quad \text{for all} \quad j\in \N^d.
\end{gather}
Note that in general, \ie for arbitrary $j\in\N^d$ and $\sigma\in\S_I$, 
the tensor products $\eta_{d,\sigma(j)}$ and $\eta_{d,j}$ do not coincide,
because taking the tensor product is not commutative in general.
Therefore, $\SI_I$ is not simply the identity on $\{ \eta_{d,j} \sep j \in \N^d \}$.
On the other hand, 
we see that for different $j\in \N^d$ many of the functions 
$\SI_I \eta_{d,j}$ coincide.
Of course the same holds true for $\AI_I \eta_{d,j}$, 
at least up to a factor of $(-1)$.

We will see in the following that for $P_I\in\{\SI_I, \AI_I\}$ 
a linearly independent subset of all projections 
$\{ P_I\eta_{d,j} \sep j\in\N^d \}$ 
equipped with suitable normalizing constants can be used as
an ONB of the linear subspace $P_I(H_d)$ of $I$-(anti-)symmetric
functions in $H_d$.
To this end, we need a further definition.
For fixed $d\geq 2$ and $I\subset\{1,\ldots,d\}$, let us introduce a function
\begin{gather*}
				M_I =M_{I,d} \colon \N^d \nach \{0,\ldots,\#I\}^{\#I}
\end{gather*} 
which counts how often different integers occur in a 
given multi-index $j\in\N^d$ among the subset $I$ of coordinates,
ordered with respect to their rate.
To give an example let $d=7$ and $I=\{1,\ldots,6\}$. 
Then $M_{I,7}$ applied to $j=(12, 4, 4, 12, 6, 4, 4) \in \N^7$ 
gives the $\#I=6$ dimensional vector $M_{I,7}(j) = (3, 2, 1, 0, 0, 0)$, 
because $j$ contains the number ``$4$'' three times
among the coordinates $j_1,\ldots,j_6$, ``$12$'' two times and so on. 
Since in this example there are only three different numbers involved,
the fourth to sixth coordinates of $M_{I,7}(j)$ equal zero.
Obviously, $M_{I}$ is invariant under 
all permutations $\pi\in\S_I$ of the argument.
Thus, 
\begin{gather*}
				M_{I}(j) = M_{I}(\pi(j)) \quad \text{for all} \quad j\in\N^d \quad \text{and} \quad \pi \in \S_I.
\end{gather*}
In addition, since $M_{I}(j)$ is again a multi-index, we see that
$\abs{M_{I}(j)}=\#I$ and $M_{I}(j)!$ are well-defined for every $j\in\N^d$.
With this tool we are ready to state the following assertion
which can be shown using elementary arguments as well as \autoref{projection}; 
see the appendix.

\begin{lemma}\label{lemma_basis}
				Assume $\{\eta_{d,j} \sep j\in \N^d \}$ to be a given orthonormal 
				tensor product basis of the function space $H_d$ 
				and let $\leer \neq I=\{i_1,\ldots,i_{\# I}\} \subset\{1,\ldots,d\}$.
				Moreover, for $P_I\in\{\SI_I,\AI_I\}$ define functions $\xi_j \colon D^d \nach \R$, 
				\begin{gather*}
								\xi_j = \sqrt{\frac{\#\S_I}{M_{I}(j)!}} \cdot P_I(\eta_{d,j}) 
								\quad \text{ for } \quad j \in \N^d.
				\end{gather*}
				Then the set $\{ \xi_k \sep k \in \nabla_d \}$ builds an 
				orthonormal basis of the partially (anti-) symmetric subspace $P_I(H_d)$, 
				where $\nabla_d$ is given by
				\begin{gather}\label{def_nabla}
								\nabla_d = \begin{cases}
															\{ k \in \N^d \sep k_{i_1} \leq k_{i_2} \leq \ldots \leq k_{i_{\#I}}\},& \text{ if } P_I=\SI_I,\\
															\{ k \in \N^d \sep k_{i_1} < k_{i_2} < \ldots < k_{i_{\#I}}\},& \text{ if } P_I=\AI_I.
													\end{cases}
				\end{gather}
\end{lemma}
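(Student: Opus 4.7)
The plan is to reduce every claim—orthogonality, unit norm, and spanning—to an orbit–stabilizer computation for the action of $\S_I$ on $\N^d$ via formula \link{antisym_basis}. Writing $\epsilon(\pi)=1$ in the symmetric case and $\epsilon(\pi)=(-1)^{\abs{\pi}}$ in the antisymmetric case, \link{antisym_basis} together with the orthonormality of $\{\eta_{d,m}\}_{m\in\N^d}$ in $H_d$ yields
\begin{gather*}
        \distr{P_I\eta_{d,j}}{P_I\eta_{d,k}}_{H_d} = \frac{1}{(\#\S_I)^2} \sum_{\substack{\sigma,\tau\in\S_I \\ \sigma(j)=\tau(k)}} \epsilon(\sigma)\epsilon(\tau)
\end{gather*}
for all $j,k\in\N^d$, so everything reduces to counting signed pairs $(\sigma,\tau)$ with $\sigma(j)=\tau(k)$.

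The combinatorial heart is the stabilizer $\mathrm{Stab}_{\S_I}(j)=\{\rho\in\S_I\sep \rho(j)=j\}$. A permutation $\rho\in\S_I$ fixes $j$ iff it only permutes positions in $I$ on which $j$ takes the same value; hence $\mathrm{Stab}_{\S_I}(j)$ factors as a direct product of symmetric groups of orders $M_I(j)_1, M_I(j)_2, \ldots$, giving $\#\mathrm{Stab}_{\S_I}(j)=M_I(j)!$. Setting $j=k$ and reindexing $\rho=\tau^{-1}\sigma$, the displayed identity collapses to
\begin{gather*}
        \norm{P_I\eta_{d,j}}^2 = \frac{1}{\#\S_I} \sum_{\rho\in\mathrm{Stab}_{\S_I}(j)} \epsilon(\rho).
\end{gather*}
In the symmetric case this equals $M_I(j)!/\#\S_I$ unconditionally. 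In the antisymmetric case, if two $I$-coordinates of $j$ coincide then $\mathrm{Stab}_{\S_I}(j)$ contains a transposition $\tau_0$, and left multiplication by $\tau_0$ is a sign-reversing self-bijection of the stabilizer; thus the sum—hence $\AI_I\eta_{d,j}$—vanishes. Otherwise $M_I(j)!=1$ and again the norm squared equals $M_I(j)!/\#\S_I$. In both cases $\norm{\xi_j}=1$ precisely for $j\in\nabla_d$.

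For orthogonality of $\xi_k$ and $\xi_{k'}$ with distinct $k,k'\in\nabla_d$ the same count vanishes: the relation $\sigma(k)=\tau(k')$ for some $\sigma,\tau\in\S_I$ would yield $(\tau^{-1}\sigma)(k)=k'$, but each $\S_I$-orbit in $\N^d$ contains exactly one representative with (weakly or strictly) increasing $I$-coordinates, contradicting $k\neq k'$. For spanning, any $j\in\N^d$ can be transformed by some $\pi\in\S_I$ into a sorted form $\pi(j)$; if two $I$-coordinates of $j$ coincide—which in the antisymmetric case prevents the sorted representative from belonging to $\nabla_d$—then $P_I\eta_{d,j}=0$, otherwise substituting $j\mapsto\pi(j)$ in \link{antisym_basis} (together with $\epsilon(\pi)^2=1$) gives $P_I\eta_{d,j}=\epsilon(\pi)\,P_I\eta_{d,\pi(j)}$, identifying $P_I\eta_{d,j}$ as a scalar multiple of $\xi_k$ with $k=\pi(j)\in\nabla_d$. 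Since $\{P_I\eta_{d,j}\}_{j\in\N^d}$ spans $P_I(H_d)$ by \autoref{projection}, so does $\{\xi_k\}_{k\in\nabla_d}$. I expect the main obstacle to be the antisymmetric stabilizer sum, where one must simultaneously verify the correct normalizing constant \emph{and} the cancellation that restricts the basis exactly to strictly ordered multi-indices.
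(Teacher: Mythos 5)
Your proof is correct. For the orthonormality half it is essentially the paper's argument repackaged in orbit--stabilizer language: the paper likewise expands $\distr{P_I\eta_{d,j}}{P_I\eta_{d,k}}$ into a double sum over $\S_I$, observes that non-zero contributions force $j=k$ for $j,k\in\nabla_d$, and then counts $M_I(j)!$ matching permutations (your stabilizer) to recover the normalizing constant; your sign-reversing involution that kills $\AI_I\eta_{d,j}$ when $j$ has a repeated $I$-coordinate is the same cancellation the paper uses implicitly when noting $M_I(j)!=1$ on $\nabla_d$ in the antisymmetric case. Where you genuinely diverge is the spanning step. The paper fixes an arbitrary $f\in P_I(H_d)$, derives the coefficient relation $\epsilon(\pi)\distr{f}{\eta_{d,j}}=\distr{f}{\eta_{d,\pi(j)}}$, regroups the basis expansion over $\S_I$-orbits, and verifies $f=\sum_{k\in\nabla_d}\distr{f}{\xi_k}\,\xi_k$ directly. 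You instead show that each generator $P_I\eta_{d,j}$, for arbitrary $j\in\N^d$, is either zero or a scalar multiple of $\xi_{\pi(j)}$ with $\pi(j)\in\nabla_d$, and then invoke that $\{P_I\eta_{d,j}\}_{j\in\N^d}$ has dense span in $P_I(H_d)$ (a one-line consequence of \autoref{projection} together with continuity of $P_I$, which you should state explicitly rather than merely citing the lemma, since \autoref{projection} asserts only that $P_I$ is a bounded orthogonal projection). Your route avoids the coefficient identity entirely and is arguably shorter; the paper's route has the small advantage of simultaneously producing the explicit Fourier expansion $f=\sum_{k\in\nabla_d}\distr{f}{\xi_k}\,\xi_k$, which is reused later, e.g.\ in the proof of \autoref{theo_upperbound}. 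Both are valid, and you correctly flag the delicate point, namely that the antisymmetric vanishing on repeated $I$-coordinates is exactly what restricts the index set to strictly increasing multi-indices.
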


Observe that in the antisymmetric case the definition of $\xi_j$
for $j\in\nabla_d$ simplifies, since then $M_I(j)!=1$ for all $j\in\nabla_d$.
Moreover, note that in the special case $I=\{1,\ldots,\#I \}$ 
we have 
\begin{gather*}
				P_I(H_d) = P_I \left( \bigotimes_{j\in I} H_1 \right) \otimes \left( \bigotimes_{j\notin I} H_1 \right).
\end{gather*}
That is, we can consider the subspace of $I$-(anti-)symmetric functions 
$f\in H_d$ as the tensor product
of the set of all fully (anti-) symmetric $\#I$-variate functions 
with the $(d-\#I)$-fold tensor product of $H_1$.
Modifications in connection with multiple partially (anti-) symmetric functions are obvious.

Finally, note that \autoref{lemma_basis} also holds 
if the index set $\N$ of the univariate basis
$\{\eta_i \sep i\in\N\}$ is replaced by a more general 
countable set equipped with a total order.
But let us shortly focus on another generalization 
of the previous results.

\begin{rem}[Arbitrary tensor product Hilbert spaces]
Up to now we exclusively dealt with Hilbert \textit{function} spaces.
However, the proofs of \autoref{projection} and \autoref{lemma_basis} yield 
that there are only a few key arguments in connection with 
(anti-) symmetry such that we do not need this restriction.

Starting from the very beginning we need to adapt the definition 
of $I$-(anti-)symmetry due to \link{sym} and \link{antisym}.
Of course it is sufficient to define this property at first only for basis elements.
Therefore, if $E_d=\{\eta_{d,k} \sep k \in \N^d \}$ 
denotes a tensor product ONB of $H_d$
and $\leer \neq I\subset\{1,\ldots,d\}$ is given then
we call an element $\eta_{d,k} = \bigotimes_{l=1}^d \eta_{k_l}$
\textit{partially symmetric with respect to $I$} 
(or \textit{$I$-symmetric} for short), if
\begin{gather*}
			\eta_{d,k} = \eta_{d,\pi(k)} \quad \text{for all} \quad \pi \in \S_I,
\end{gather*}
where $\S_I$ and $\pi(k)=(k_{\pi(1)},\ldots,k_{\pi(d)})$ are defined as above.
Analogously we define \textit{$I$-antisymmetry} 
with an additional factor $(-1)^{\abs{\pi}}$.
Moreover, an arbitrary element in $H_d$ is called $I$-(anti-)symmetric
if in its basis expansion every element 
with non-vanishing coefficient possesses this property.

Next, the \textit{antisymmetrizer} $\AI_I$ is defined as the uniquely 
defined continuous extension of the linear mapping
\begin{gather*}
			\AI_I \colon E_d \nach H_d, \quad \AI_I(\eta_{d,k}) = \frac{1}{\# \S_I} \sum_{\pi \in \S_I} (-1)^{\abs{\pi}} \eta_{d,\pi(k)}
\end{gather*}
from $E_d$ to $H_d$.
Again the \textit{symmetrizer} $\SI_I$ is given in a similar way.
Hence, in the general setting we define the mappings using 
formula~\link{antisym_basis}, which we derived for the special case.
Note that the triangle inequality yields $\norm{P_I} \leq 1$, for $P_I\in\{\SI_I,\AI_I\}$.

Once more we denote the sets of all $I$-(anti-)symmetric elements of $H_d$
by $P_I(H_d)$, where $P_I\in \{\SI_I,\AI_I\}$.
Observe, that this can be justified
since the operators $P_I$ again provide orthogonal projections onto closed linear subspaces
as described in \autoref{projection}.

Finally, also the proof of \autoref{lemma_basis} can be adapted to the
general Hilbert space case. 
\end{rem}


\section{Optimal algorithms}\label{sect_OptAlgos}


In the present section we conclude optimal algorithms
for linear problems defined on (anti-) symmetric
subsets of tensor product Hilbert spaces 
as described in the previous paragraph.
Moreover, we deduce formulae for the $n$-th minimal errors of
these (anti-) symmetric problems and recover the
known assertions for the entire tensor product problem.

\subsection{Basic definitions and the main result}
Throughout the whole section we use the following notation. 
Let $H_1$ be a (infinite dimensional) separable Hilbert space
with inner product $\distr{\cdot}{\cdot}_{H_1}$ and let $G_1$ be
some arbitrary Hilbert space.
Furthermore, assume $S_1 \colon H_1 \nach G_1$ to be a compact linear operator 
between these spaces and consider its singular value decomposition.
That is, define the compact self-adjoint operator
$W_1=S_1^\dagger S_1 \colon H_1 \nach H_1$
and denote its eigenpairs with respect to a non-increasing 
ordering of the eigenvalues by $\{(e_i,\lambda_i) \sep  i\in\N\}$, i.e.
\begin{gather}\label{univariateeigenpairs}
				W_1(e_i) = \lambda_i e_i, \quad 
				\text{and} \quad \distr{e_i}{e_j}_{H_1} = \delta_{i,j} \quad 
				\text{with} \quad \lambda_1 \geq \lambda_2 \geq \ldots \geq 0.
\end{gather}
Then $\lambda=(\lambda_i)_{i\in\N}$ coincides with the sequence of the squared
singular values $\sigma^2=(\sigma_i^2)_{i\in\N}$ of $S_1$ 
and the set $\{ e_i \sep i \in \N\}$ forms an ONB of $H_1$;
see, e.g., Section 4.2.3 in Novak and Wo{\'z}niakowski~\cite{NW08}.
In the following we will refer to $S_1$ as the \textit{univariate problem} 
or \textit{univariate case}.


For $d\geq 2$, let $H_d = H_1 \otimes \ldots \otimes H_1$
be the $d$-fold tensor product space of $H_1$.
This means that $H_d$ is the closure of the set of all 
linear combinations of formal objects
$f=\bigotimes_{l=1}^d f_l$ with $f_l \in H_1$, 
called \textit{simple tensors} or \textit{pure tensors}.
Here the closure is taken with respect to the
inner product in $H_d$ which is defined such that
\begin{gather*}
				\distr{\bigotimes_{l=1}^d f_l}{\bigotimes_{l=1}^d g_l}_{H_d} = \prod_{l=1}^d \distr{f_l}{g_l}_{H_1} \quad \text{for} \quad f_l, g_l \in H_1.
\end{gather*}
With these definitions $H_d$ is also an infinite dimensional 
Hilbert space and it is easy to check that
\begin{gather}\label{basis_eta}
				E_d=\left\{\eta_{d,j} = \bigotimes_{l=1}^d \eta_{j_l} \in H_d \sep j = (j_1,\ldots,d) \in \N^d\right\}
\end{gather}
forms an orthonormal basis in $H_d$ if $\{\eta_i \in H_1 \sep i\in\N\}$ 
is an arbitrary ONB in the underlying space $H_1$.
Similarly, let $G_d = G_1 \otimes \ldots \otimes G_1$, $d$ times, and
define $S_d$ as the tensor product operator
\begin{gather*}
				S_d = S_1 \otimes \ldots \otimes S_1 \colon H_d \nach G_d.
\end{gather*}
In detail, we define the bounded linear operator $\tilde{S}_d \colon E_d \nach G_d$ 
such that for all 
$j\in\N^d$ we have $\tilde{S}_d(\eta_{d,j}) = \tilde{S}_d (\bigotimes_{l=1}^d \eta_{j_l} ) = \bigotimes_{l=1}^d S_1(\eta_{j_l}) \in G_d$. 
Then $S_d$ is assumed to be the uniquely defined linear, continuous extension of $\tilde{S}_d$ from $E_d$ to $H_d$.

We refer to the problem of approximating $S_d \colon H_d \nach G_d$ 
as the \textit{entire $d$-variate problem}.
In contrast, we are interested in the restriction 
$S_d\big|_{P_I(H_d)} \colon P_I(H_d) \nach G_d$ 
of $S_d$ to some {(anti-)} symmetric subspace $P_I(H_d)$ with $P_I\in\{\SI_I,\AI_I\}$
as described in the previous section. 
To abbreviate the notation we denote this restriction again by $S_d$
and refer to it as the \textit{$I$-\mbox{(anti-)}symmetric problem}.

For the singular value decomposition of the entire problem 
operator $S_d$ we consider
the self-adjoint, compact operator
\begin{gather*}
				W_d = {S_d}^\dagger S_d \colon H_d \nach H_d.
\end{gather*}
Its eigenpairs $\{(e_{d,j},\lambda_{d,j}) \sep j=(j_1,\ldots,j_d) \in \N^d\}$
are given by the set of all $d$-fold (tensor) products of the 
univariate eigenpairs \link{univariateeigenpairs} of $W_1$, i.e.,
\begin{gather}\label{Eigenpairs}
				e_{d,j}=\bigotimes_{l=1}^d e_{j_l}
				\quad \text{and} \quad 
				\lambda_{d,j}=\prod_{l=1}^d \lambda_{j_l}
				\quad \text{for} \quad j=(j_1,\ldots,j_d)\in\N^d.
\end{gather}
It is well-known how these eigenpairs can be used to construct
a linear algorithm $A'_{n,d}$ which is optimal for the entire 
$d$-variate tensor product problem.
In detail, $A'_{n,d}$ minimizes the \textit{worst case error}
\begin{gather*}
				e^{\rm wor}(A_{n,d}; H_d) = \sup_{f\in \B(H_d)} \norm{A_{n,d}(f)-S_d(f) \sep G_d}
\end{gather*}
among all adaptive linear algorithms $A_{n,d}$ using $n$ continuous linear functionals. 
Here $\B(H_d)$ denotes the unit ball of $H_d$.
In other words, $A'_{n,d}$ achieves the \textit{$n$-th minimal error}
\begin{gather*}
			e(n,d;H_d) = \inf_{A_{n,d}} e^{\rm wor}(A_{n,d}; H_d).
\end{gather*}

With this notation our main result reads as follows.

\begin{theorem}\label{theo_opt_algo}
				Let $\{(e_m, \lambda_m) \sep m\in \N\}$ denote the eigenpairs 
				of $W_1$ given by \link{univariateeigenpairs}.
				Moreover, for $d>1$ let 
				$\leer \neq I=\{i_1, \ldots, i_{\#I}\} \subset\{1,\ldots,d\}$ and 
				assume $S_d$ to be the linear tensor product problem restricted 
				to the $I$-(anti-)symmetric subspace $P_I(H_d)$, 
				where $P_I\in\{\SI_I,\AI_I\}$, 
				of the $d$-fold tensor product space $H_d$.
				Finally, let $\nabla_d$ be given by \link{def_nabla} and define
				\begin{gather}\label{eigenpairs_sym}
								\{(\xi_{\psi(v)}, \lambda_{d,\psi(v)}) \sep v\in \N\} 
								= \{(\xi_k, \lambda_{d,k}) \sep k \in \nabla_d\}
				\end{gather} 
				by $\xi_k=\sqrt{\#\S_I / M_I(k)!}\cdot P_I(e_{k_1}\otimes \ldots \otimes e_{k_d})$
				and	$\lambda_{d,k}=\prod_{l=1}^d \lambda_{k_l}$, for $k\in\nabla_d$, 
				where $\psi\colon \N \nach \nabla_d$ provides a non-increasing 
				rearrangement of $\{ \lambda_{d,k} \sep k\in \nabla_d\}$.\\				
				Then for every $d>1$ the set \link{eigenpairs_sym} denotes
				the eigenpairs of $W_d\big|_{P_I(H_d)}={S_d}^\dagger S_d$. Thus,
				for every $n \in \N_0$, the linear algorithm 
				$A_{n,d}^*\colon P_I(H_d) \nach P_I(G_d)$,
				\begin{gather}\label{opt_algo}
							A_{n,d}^*f = \sum_{v=1}^n \distr{f}{\xi_{\psi(v)}}_{H_d} \cdot S_d \xi_{\psi(v)},
				\end{gather}
				which uses $n$ linear functionals, is $n$-th optimal 
				for $S_d$ on $P_I(H_d)$ with respect to the worst case setting. 
				Furthermore, it is
				\begin{gather}\label{nth_error}
								e(n,d; P_I(H_d)) 
								= e^{\rm wor}(A_{n,d}^*;P_I(H_d)) 
								= \sqrt{\lambda_{d, \psi(n+1)}}.
				\end{gather}
\end{theorem}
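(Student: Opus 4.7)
The overall plan is to reduce the claim to the classical eigenvalue-based characterization of $n$-th minimal errors for compact linear operators between Hilbert spaces (as stated, e.g., in Novak--Wo\'zniakowski, Sections 4.2.3 and 5.2), which says that for a compact operator $T\colon\mathcal{X}\to\mathcal{Y}$ between Hilbert spaces, the $n$-th minimal worst case error on the unit ball of $\mathcal{X}$ equals $\sqrt{\mu_{n+1}}$, where $(\mu_v)_v$ is the non-increasing sequence of eigenvalues of $T^\dagger T$, and is attained by the linear algorithm projecting onto the first $n$ eigenvectors. Applying this to $T=S_d|_{P_I(H_d)}$, the theorem will follow once I identify the eigenpairs of $(S_d|_{P_I(H_d)})^\dagger (S_d|_{P_I(H_d)})$ with those given in \link{eigenpairs_sym}.

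The first substantial step is to show that $W_d=S_d^\dagger S_d$ commutes with the projection $P_I$. Since $S_d=S_1\otimes\cdots\otimes S_1$ and $W_d=W_1\otimes\cdots\otimes W_1$ acts coordinate-wise, the action of $W_d$ visibly commutes with any permutation of tensor factors in $\S_I$; averaging over $\S_I$ (with signs in the antisymmetric case) then yields $W_d\circ P_I=P_I\circ W_d$. In particular $W_d$ leaves $P_I(H_d)$ invariant, and a short computation using $P_I^\dagger=P_I=P_I^2$ from \autoref{projection} gives
\begin{equation*}
(S_d|_{P_I(H_d)})^\dagger (S_d|_{P_I(H_d)}) \;=\; P_I\,W_d\,P_I\big|_{P_I(H_d)} \;=\; W_d\big|_{P_I(H_d)}.
\end{equation*}

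The second step is to diagonalize $W_d|_{P_I(H_d)}$ in the basis from \autoref{lemma_basis}. For $k\in\nabla_d$ each tensor $e_{d,\sigma(k)}$ appearing in the definition of $\xi_k$ is an eigenvector of $W_d$ with eigenvalue $\lambda_{d,\sigma(k)}=\prod_l\lambda_{k_{\sigma(l)}}=\lambda_{d,k}$ by commutativity of the product, so by linearity
\begin{equation*}
W_d\,\xi_k \;=\; \sqrt{\tfrac{\#\S_I}{M_I(k)!}}\,P_I\,W_d\,e_{d,k} \;=\; \lambda_{d,k}\,\xi_k.
\end{equation*}
By \autoref{lemma_basis} applied to the ONB $\{e_{d,j}\}_{j\in\N^d}$ of $H_d$, the family $\{\xi_k\}_{k\in\nabla_d}$ is an ONB of $P_I(H_d)$, so re-indexing via the non-increasing rearrangement $\psi$ yields exactly the asserted eigensystem \link{eigenpairs_sym}.

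The final step is purely a citation: the classical theorem recalled above applies to the compact operator $S_d|_{P_I(H_d)}$ and produces the optimal algorithm \link{opt_algo} together with the error formula \link{nth_error}. The main obstacle, in my view, is the commutativity argument $W_d\circ P_I=P_I\circ W_d$ together with the identification of the adjoint of the restriction; once this is in place, everything else is bookkeeping using \autoref{lemma_basis} and the standard compact-operator result. A minor subtlety to treat carefully is the convention that $\psi$ rearranges multiplicities faithfully, so that each eigenvalue in the list \link{eigenpairs_sym} is counted with its correct multiplicity as an eigenvalue of $W_d|_{P_I(H_d)}$; this follows because the $\xi_k$, $k\in\nabla_d$, are already mutually orthonormal.
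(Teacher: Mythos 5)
Your proof is correct and shares the same skeleton as the paper's: invoke \autoref{lemma_basis} for the ONB $\{\xi_k\}_{k\in\nabla_d}$ of $P_I(H_d)$, identify $\{(\xi_k,\lambda_{d,k})\}$ as the eigenpairs of $W_d\big|_{P_I(H_d)}$, and then appeal to the classical compact-operator characterization of $n$-th minimal errors for the algorithm \link{opt_algo} and the error formula \link{nth_error}. The intermediate route differs, though. You establish the commutativity at the level of $W_d=W_1\otimes\cdots\otimes W_1$ with $P_I$ and pair it with the adjoint-of-a-restriction identity $(S_d|_V)^\dagger = P_V S_d^\dagger$ to get $(S_d|_{P_I(H_d)})^\dagger S_d|_{P_I(H_d)} = W_d|_{P_I(H_d)}$ directly, after which the diagonalization $W_d\xi_k=\lambda_{d,k}\xi_k$ is immediate. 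The paper instead proves the commutativity $S_d\circ P_I^H=P_I^G\circ S_d$ (\autoref{theo_bestapprox}), uses it to compute the upper bound on $e^{\rm wor}(A^*_{n,d})$ explicitly (\autoref{theo_upperbound}), extracting the orthogonality $\distr{S_d\xi_j}{S_d\xi_k}_{G_d}=\lambda_{d,j}\delta_{j,k}$ along the way, and only afterwards observes that this relation encodes the eigenpair structure of $W_d|_{P_I(H_d)}$; optimality is then concluded from the general theory, with a self-contained lower-bound argument deferred to the appendix. Your version is a bit cleaner in arriving at the eigenstructure (no explicit upper-bound calculation needed), but it does not recover the by-product \autoref{theo_bestapprox} about optimal algorithms preserving (anti-)symmetry, nor does it offer the appendix's elementary lower-bound proof; both versions ultimately rest on the same citation for $n$-th optimality.
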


Let us add some remarks on this theorem.
First of all, the sum over an empty index set is 
to be interpreted as zero such that $A_{0,d}^*f \equiv 0$.
Further, note that the worst case error can be attained 
with the function $\xi_{\psi(n+1)}$.
It can be improved neither by non-linear algorithms 
using continuous information,
nor by linear algorithms using adaptive information.
Moreover, observe that the classical entire tensor product
problem is included as the case $\#I=1$, where
we do not claim any (anti-) symmetry.
Then $\nabla_d=\N^d$ and the $\xi_k$'s simply
equal the tensor products $e_{d,k}=\otimes_{l=1}^d e_{k_l}$.
Hence, $A^*_{n,d}=A'_{n,d}$.

The remainder of this section is devoted to the proof of 
the main result \autoref{theo_opt_algo}.

\subsection{Proof of Theorem 1}\label{sect_auxresults}
We start with an auxiliary result which shows that any optimal
algorithm $A^*$ for $S_d$ needs to preserve the 
(anti-) symmetry properties of its domain of definition, 
\ie$A^*f \in P_I(G_d)$ for all $f\in P_I(H_d)$.
The following proposition generalizes Lemma 10.2 
in Zeiser~\cite{Z10} where this assertion was shown 
for the approximation problem, \ie for $S_d=\id$.
A comprehensive proof can be found in the appendix of this paper.

\begin{prop}\label{theo_bestapprox}
				Let $d>1$ and assume $\leer \neq I \subset \{1,\ldots,d\}$. 
				Furthermore, for $X\in\{H,G\}$, let $P_I^X$ denote the (anti-) symmetrizer $P_I\in\{\SI_I,\AI_I\}$ on $X_d$ 
				with respect to $I$ and suppose $A \colon P_I^H(H_d) \nach G_d$ to be 
				an arbitrary algorithm for $S_d$.
				Then, for $g \in H_d$,
				\begin{gather}\label{commute}
								(S_d \circ P_I^H)(g) = (P_I^G \circ S_d)(g),
				\end{gather}
				and for all $f \in P_I^H(H_d)$ it holds
				\begin{gather}\label{bestapprox}
								\norm{S_d f - Af \sep G_d}^2 = \norm{S_d f - P_I^G (A f) \sep G_d}^2 + \norm{Af - P_I^G (A f) \sep G_d}^2.
				\end{gather}
				Hence, an optimal algorithm $A^*$ for $S_d$ preserves (anti-) symmetry, \ie
				\begin{gather*}
								A^*f \in P_I^G(G_d) \quad \text{for all} \quad f \in P_I^H(H_d).
				\end{gather*}
\end{prop}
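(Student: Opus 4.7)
The plan is to first establish the commutation relation \link{commute} using the tensor product structure of $S_d$, then deduce the orthogonal splitting \link{bestapprox} via Pythagoras, and finally read off the claim about optimal algorithms.

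For each $\pi\in\S_I$ I introduce the bounded linear operator $T_\pi^X$ on $X_d$ (for $X\in\{H,G\}$) determined on pure tensors by $T_\pi^X(f_1\otimes\ldots\otimes f_d)=f_{\pi^{-1}(1)}\otimes\ldots\otimes f_{\pi^{-1}(d)}$. This is precisely the rearrangement that appeared in the calculation leading to \link{pi_inside}, and by the remark closing \autoref{sect_antisym} one has $P_I^X=\frac{1}{\#\S_I}\sum_{\pi\in\S_I}(\pm1)^{\abs{\pi}}T_\pi^X$ in this notation. Since $S_d$ applies $S_1$ to each tensor factor independently, a direct calculation on pure tensors gives
\[
S_d\bigl(T_\pi^H(f_1\otimes\ldots\otimes f_d)\bigr)=S_1 f_{\pi^{-1}(1)}\otimes\ldots\otimes S_1 f_{\pi^{-1}(d)}=T_\pi^G\bigl(S_d(f_1\otimes\ldots\otimes f_d)\bigr).
\]
Pure tensors span a dense subspace of $H_d$ and all operators involved are bounded and linear, so $S_d\circ T_\pi^H=T_\pi^G\circ S_d$ on all of $H_d$. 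Averaging over $\pi\in\S_I$ with the appropriate $(\pm1)^{\abs{\pi}}$ weights then yields \link{commute}.

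Given \link{commute}, the decomposition \link{bestapprox} follows from a single orthogonality argument. Fix $f\in P_I^H(H_d)$ so that $P_I^H f=f$. Then $S_d f=S_d(P_I^H f)=P_I^G(S_d f)$, which lies in $P_I^G(G_d)$. Since $P_I^G(Af)$ lies in $P_I^G(G_d)$ as well, the difference $S_df-P_I^G(Af)$ belongs to the closed subspace $P_I^G(G_d)$. On the other hand $Af-P_I^G(Af)$ lies in $(P_I^G(G_d))^\bot$ because $P_I^G$ is an orthogonal projection by \autoref{projection}. Writing
\[
S_df-Af=\bigl(S_df-P_I^G(Af)\bigr)-\bigl(Af-P_I^G(Af)\bigr)
\]
and invoking the Pythagorean theorem produces \link{bestapprox}.

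Finally, the last claim is immediate: inequality \link{bestapprox} shows that $\norm{S_df-(P_I^G\circ A)(f)\sep G_d}\leq\norm{S_df-Af\sep G_d}$ for every $f\in P_I^H(H_d)$ and every algorithm $A$, so replacing $A$ by $P_I^G\circ A$ never increases the worst case error on the unit ball of $P_I^H(H_d)$. Consequently in the search for an optimal algorithm one may restrict attention to algorithms $A^*$ whose range lies in $P_I^G(G_d)$, as asserted. I do not anticipate a real obstacle in this argument; the only point requiring care is the $\pi$ versus $\pi^{-1}$ bookkeeping in Step~1, which has already been carried out in the excerpt when deriving \link{pi_inside}.
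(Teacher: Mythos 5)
Your proof is correct and follows essentially the same route as the paper's: both establish the commutation relation \link{commute} first on a dense set (you work with pure tensors and the auxiliary permutation operators $T_\pi^X$, the paper works directly with the basis elements $\eta_{d,j}$ and the symmetrizer), extend by linearity and boundedness, and then deduce \link{bestapprox} from the orthogonal projection property of $P_I^G$ together with the Pythagorean theorem. The reformulation via $T_\pi^X$ and subsequent averaging is a minor reorganization of the paper's computation, not a genuinely different argument, and your handling of the final claim (worst case error cannot increase under post-composition with $P_I^G$) matches the intended reading.
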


Beside this qualitative assertion we are interested in explicit error bounds. 
Therefore, the next proposition shows an upper bound on the worst case error
of the algorithm $A_{n,d}^*$ given by \link{opt_algo}.

\begin{prop}[Upper bound]\label{theo_upperbound}
				Under the assumptions of \autoref{theo_opt_algo}
				the worst case error of $A_{n,d}^*$ given by \link{opt_algo} is bounded from above by
				\begin{gather*}
							e^{\rm wor}(A^*_{n,d}; P_I(H_d)) \leq \sqrt{\lambda_{d,\psi(n+1)}}.
				\end{gather*}				
\end{prop}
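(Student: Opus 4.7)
The plan is to exploit that, by \autoref{lemma_basis}, the family $\{\xi_k : k\in\nabla_d\}$ is an orthonormal basis of $P_I(H_d)$, and that each $\xi_k$ is automatically an eigenvector of $W_d|_{P_I(H_d)}=S_d^\dagger S_d$ with eigenvalue $\lambda_{d,k}=\prod_{l=1}^d \lambda_{k_l}$. The latter is the one non-trivial ingredient, but it follows from the fact that $\lambda_{d,k}$ is invariant under permutations of the entries of $k$: by \link{antisym_basis}, $\xi_k$ is (up to a normalization) a signed sum of basis tensors $e_{d,\sigma(k)}$ with $\sigma\in\S_I$, each of which is an eigenvector of $W_d$ with the common eigenvalue $\lambda_{d,\sigma(k)}=\lambda_{d,k}$. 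Linearity of $W_d$ then gives $W_d \xi_k = \lambda_{d,k}\xi_k$. Consequently
\[
			\distr{S_d \xi_k}{S_d \xi_j}_{G_d} = \distr{W_d \xi_k}{\xi_j}_{H_d} = \lambda_{d,k}\,\delta_{k,j}
\]
for all $k,j\in\nabla_d$, so the system $\{S_d\xi_k\}$ is orthogonal in $G_d$ with $\norm{S_d\xi_k\sep G_d}^2=\lambda_{d,k}$.

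Next, I would fix an arbitrary $f\in\B(P_I(H_d))$ and expand it in this ONB as $f=\sum_{v=1}^\infty \distr{f}{\xi_{\psi(v)}}_{H_d}\, \xi_{\psi(v)}$. Since $S_d$ is bounded and linear, this can be passed through $S_d$, and subtracting the defining formula \link{opt_algo} of $A^*_{n,d}f$ yields
\[
				S_d f - A^*_{n,d} f \;=\; \sum_{v>n} \distr{f}{\xi_{\psi(v)}}_{H_d}\, S_d \xi_{\psi(v)}.
\]
Using orthogonality of the $S_d\xi_{\psi(v)}$ established above, together with $\norm{S_d\xi_{\psi(v)}\sep G_d}^2=\lambda_{d,\psi(v)}$, Parseval's identity gives
\[
				\norm{S_d f - A^*_{n,d} f \sep G_d}^2 \;=\; \sum_{v>n} \abs{\distr{f}{\xi_{\psi(v)}}_{H_d}}^2 \, \lambda_{d,\psi(v)}.
\]

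Finally, since $\psi$ was chosen to produce a non-increasing rearrangement of $\{\lambda_{d,k}\sep k\in\nabla_d\}$, one has $\lambda_{d,\psi(v)}\leq \lambda_{d,\psi(n+1)}$ for every $v\geq n+1$. Pulling this out and using Parseval once more together with $\norm{f\sep H_d}\leq 1$ yields
\[
				\norm{S_d f - A^*_{n,d} f \sep G_d}^2 \;\leq\; \lambda_{d,\psi(n+1)} \sum_{v>n} \abs{\distr{f}{\xi_{\psi(v)}}_{H_d}}^2 \;\leq\; \lambda_{d,\psi(n+1)}\, \norm{f\sep H_d}^2 \;\leq\; \lambda_{d,\psi(n+1)}.
\]
Taking the square root and the supremum over $f\in\B(P_I(H_d))$ gives the claim. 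The only real step that requires care is verifying the eigenvector property of $\xi_k$ on $P_I(H_d)$; everything else is a routine Parseval computation combined with the monotonicity built into $\psi$.
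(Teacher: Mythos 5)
Your proof is correct, and its overall structure is the same as the paper's: expand $f$ in the ONB $\{\xi_k\}$ from Lemma~\ref{lemma_basis}, observe that $\{S_d\xi_k\}_{k\in\nabla_d}$ is orthogonal with $\norm{S_d\xi_k \sep G_d}^2=\lambda_{d,k}$, and then conclude by Parseval together with the monotone rearrangement $\psi$. The one place you genuinely diverge is the justification of that orthogonality. The paper pushes $S_d$ through $P_I$ using the commutation relation \link{commute} from Proposition~\ref{theo_bestapprox}, expands both $\xi_j$ and $\xi_k$ as signed sums of the tensors $e_{d,\pi(j)}$, and then redoes the combinatorics of Lemma~\ref{lemma_basis}'s orthonormality proof. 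You instead note that $\lambda_{d,\sigma(k)}=\lambda_{d,k}$ for every $\sigma\in\S_I$ (the product of eigenvalues is permutation-invariant), so $\xi_k$ is a linear combination of $W_d$-eigenvectors with a common eigenvalue, hence itself a $W_d$-eigenvector, and then simply invoke $\distr{S_d\xi_j}{S_d\xi_k}_{G_d}=\distr{W_d\xi_j}{\xi_k}_{H_d}$. This is shorter, avoids invoking Proposition~\ref{theo_bestapprox} in the upper-bound proof, and has the side benefit of establishing the eigenpair claim of Theorem~\ref{theo_opt_algo} up front --- whereas the paper derives that claim only afterwards, as a corollary of the orthogonality relation \link{Sdxi_orth}. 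So: same argument in essence, but your route to the key identity is cleaner and logically a bit tidier.
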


\begin{proof}
By \autoref{lemma_basis} we have for all $f\in P_I(H_d)$ the unique representation
\begin{gather*}
				f = \sum_{k \in \nabla_d} \distr{f}{\xi_k} \cdot \xi_k.
\end{gather*}
Therefore, the boundedness of $S_d$ together with \link{eigenpairs_sym} implies that
\begin{gather}\label{Sdf}
				S_d f = \sum_{k \in \nabla_d} \distr{f}{\xi_k}_{H_d} \cdot S_d \xi_k = \sum_{v \in \N} \distr{f}{\xi_{\psi(v)}}_{H_d} \cdot S_d \xi_{\psi(v)}
				\quad \text{for every} \quad f \in P_I(H_d).
\end{gather}
Furthermore, in the case $P_I=\AI_I$ it is easy to see that we have
\begin{eqnarray*}
				\distr{S_d \xi_j}{S_d \xi_k}_{G_d} 
				&=& \frac{\#\S_I}{\sqrt{M_I(j)!\cdot M_I(k)!}} \cdot \distr{S_d \AI_I e_{d,j}}{S_d \AI_I e_{d,k}}_{G_d} \\
				&=& \frac{1}{\# \S_I \sqrt{M_I(j)!\cdot M_I(k)!}} \sum_{\pi,\sigma \in \S_I} (-1)^{\abs{\pi}+\abs{\sigma}} \distr{S_d e_{d,\pi(j)}}{S_d e_{d,\sigma(k)}}_{G_d}
\end{eqnarray*}
for $i,j\in \nabla_d$, because of the commutativity of $S_d$ and $\AI_I$ due to \link{commute} in \autoref{theo_bestapprox}.
Obviously, the same calculation can be done in the symmetric case, where $P_I=\SI_I$.
Since $e_{d,\pi(j)}$ and $e_{d,\sigma(k)}$ are 
orthonormal eigenelements of $W_d={S_d}^\dagger S_d\colon H_d\nach H_d$,
see \link{Eigenpairs}, it is 
$\distr{S_d e_{d,\pi(j)}}{S_d e_{d,\sigma(k)}}_{G_d} = \lambda_{d,\pi(j)} \distr{e_{d,\pi(j)}}{e_{d,\sigma(k)}}_{H_d} = \lambda_{d,\pi(j)} \delta_{\pi(j),\sigma(k)}$.
Hence, similar to the proof of the mutual orthonormality of $\{\xi_k \sep k\in\nabla_d\}$ 
for \autoref{lemma_basis} we obtain
\begin{gather}\label{Sdxi_orth}
				\distr{S_d \xi_j}{S_d \xi_k}_{G_d}  = \lambda_{d,j} \delta_{j,k}
			  \quad \text{for all} \quad j,k \in \nabla_d.
\end{gather}
Therefore, we calculate for $n\in \N_0$ and $f\in P_I(H_d)$
\begin{gather*}
				\norm{S_d f - A_{d,n}^* f \sep G_d}^2 = \norm{\sum_{v>n} \distr{f}{\xi_{\psi(v)}}_{H_d} \cdot S_d\xi_{\psi(v)} \sep G_d}^2 = \sum_{v>n} \distr{f}{\xi_{\psi(v)}}_{H_d}^2 \cdot \lambda_{d,\psi(v)}.
\end{gather*}
On the other hand, for $f\in\B(P_I(H_d))$, we have by Parseval's identity 
\begin{gather*}
				1 \geq \norm{f \sep P_I(H_d)}^2 = \norm{f \sep H_d}^2 = \norm{\sum_{v\in\N} \distr{f}{\xi_{\psi(v)}}_{H_d} \cdot \xi_{\psi(v)} \sep H_d}^2 = \sum_{v\in\N} \distr{f}{\xi_{\psi(v)}}_{H_d}^2.
\end{gather*}
Thus, because of the non-increasing ordering of $( \lambda_{d,\psi(v)} )_{v\in\N}$ 
due to the choice of the rearrangement $\psi$, we can estimate the worst case error
\begin{gather*}
				e^{\rm wor}(A_{n,d}^*; P_I(H_d))^2 = \sup_{f\in \B(P_I(H_d))} \norm{S_d f - A_{n,d}^* f \sep G_d}^2 \leq \lambda_{d,\psi(n+1)},
\end{gather*}
as claimed.
\end{proof}

Note that formula \link{Sdxi_orth} in the proof of \autoref{theo_upperbound} 
together with \autoref{lemma_basis} yields that the set \link{eigenpairs_sym}
describes the eigenpairs of the self-adjoint operator
\begin{gather*}
				W_d \big|_{P_I(H_d)} = {S_d}^\dagger S_d \colon P_I(H_d) \nach P_I(H_d)
\end{gather*}
as stated in \autoref{theo_opt_algo}.
Therefore, the upper bound given in \autoref{theo_upperbound} is sharp 
and $A_{n,d}^*$ in \link{opt_algo} is $n$-th optimal, 
due to the general theory; see, e.g., 
Corollary~4.12 in Novak and Wo{\'z}niakowski~\cite{NW08}.
From the general theory it also follows that adaption does not help to
improve this $n$-th minimal error, see \cite[Theorem~4.5]{NW08}, and that
linear algorithms are best possible; see \cite[Theorem~4.8]{NW08}.
Hence, the proof of \autoref{theo_opt_algo} is complete.

Since it seems to be a little bit unsatisfying to refer to these deep results
for the proof of such an easy theorem we refer the reader to the appendix
where a nearly self-contained proof of the remaining facts can be found.
Moreover, there we describe what we mean by adaption in this context.


\section{Complexity}\label{sect_complexity}


In this part of the paper we investigate tractability properties 
of approximating the linear tensor product operator $S_d$
on certain (anti-) symmetric subsets $P_{I}(H_d)=P_{I_d}(H_d)$,
where $P \in \{\SI,\AI\}$ and $\leer \neq I_d \subset\{1,\ldots,d\}$.
Therefore, as usual, we express the $n$-th minimal error 
derived in formula \link{nth_error} in terms
of the \textit{information complexity}, 
\ie the minimal number of information operations 
needed to achieve an error smaller than
a given $\epsilon > 0$,
\begin{gather*}
				n(\epsilon,d;P_I(H_d)) = \min{n\in\N_0 \sep e(n,d; P_I(H_d)) \leq \epsilon}.
\end{gather*}
To abbreviate the notation we write $n^{\rm ent}(\epsilon,d)$ if we deal with the
entire tensor product problem.
Furthermore, as in the introduction, we denote the information complexity 
of the fully (anti-) symmetric problem by $n^{\rm asy}(\epsilon,d)$ and
$n^{\rm sym}(\epsilon,d)$, respectively.

\subsection{Preliminaries}\label{sect_prelim}
From \autoref{theo_opt_algo} we obtain for any $\epsilon>0$ and every $d\in\N$
\begin{gather*}
				n(\epsilon,d;P_I(H_d)) 
				= \min{n\in\N_0 \sep \lambda_{d, \psi(n+1)} \leq \epsilon^2} 
				= \#\left\{k\in\nabla_d\sep \prod_{l=1}^d \lambda_{k_l}>\epsilon^2\right\}
\end{gather*}
by solving \link{nth_error} for $\psi$.
Using this expression we can easily conclude the results for the 
first two problems in the introduction.
There we dealt with the case $\lambda_1=\ldots=\lambda_m=1$ and
$\lambda_j=0$ for $j>m\geq 2$.

Let us recall some common notions of tractability.
If for a given problem the information complexity $n(\epsilon,d)$ increases 
exponentially in the dimension $d$ we say the problem suffers from the 
\textit{curse of dimensionality}. 
That is, there exist constants 
$c>0$ and $C>1$ such that for at least one $\epsilon > 0$ we have
\begin{gather*}
        n(\epsilon,d) \geq c \cdot C^d
\end{gather*}
for infinitely many $d\in\N$. 
More generally, if the information complexity 
depends exponentially on $d$ or $\epsilon^{-1}$ we call the problem 
\textit{intractable}. 
Since there are many ways to measure the lack of 
exponential dependence we distinguish between different 
types of tractability. 
The most important type is \textit{polynomial tractability}.
We say that the problem is polynomially tractable if there exist
constants $C,p>0$, as well as $q\geq0$, such that
\begin{gather*}
				n(\epsilon,d) \leq C \cdot \epsilon^{-p} \cdot d^{q} 
				\quad \text{for all} \quad d\in\N, \epsilon \in(0,1].
\end{gather*}
If this inequality holds with $q=0$, the problem is called 
\textit{strongly polynomially tractable}.
If polynomial tractability does not hold we say the problem
is \textit{polynomially intractable}. 
For more specific definitions and relations 
between these and other classes of tractability 
see, e.g., the monographs of Novak and Wo{\'z}niakowski~\cite{NW08,NW10,NW11}.

In the following we distinguish two cases.
First we consider the \textit{absolute error criterion}, 
where we investigate the dependence of
$n(\epsilon,d; P_I(H_d))$ on $1/\epsilon$ and on the dimension~$d$ 
for every $\epsilon \in (0,1]$
and $d\in\N$. 
Note that without loss of generality we can restrict ourselves to
$\epsilon \leq \min{1,\epsilon_d^{\mathrm{init}}}$ since obviously
$n(\epsilon,d; P_I(H_d))=0$ for all $\epsilon\geq\epsilon_d^{\mathrm{init}}$. 
Here
\begin{gather*}
				\epsilon_d^{\rm init} = e(0,d; P_I(H_d)) = \sqrt{\lambda_{d,\psi(1)}} = \begin{cases}
						\sqrt{\lambda_1^d}, & \text{ if } P=\SI,\\
						\sqrt{\lambda_1^{b_d} \cdot \lambda_1 \cdot \ldots \cdot \lambda_{a_d}}, & \text{ if } P=\AI
				\end{cases}
\end{gather*}
describes the \textit{initial error} of the 
$d$-variate problem on the subspace $P_I(H_d)$ 
where $\psi\colon \N \nach \nabla_d$ again is
a non-increasing rearrangement of the set 
of eigenvalues $\{ \lambda_{d,k} \sep k \in \nabla_d\}$ of
$W_d\big|_{P_I(H_d)}={S_d}^\dagger S_d$
and $b_d = d - a_d$ denotes the number of coordinates without 
(anti-) symmetry conditions in dimension $d$, \ie$a_d = \#I_d$ 
and $b_d = d - \#I_d$, respectively.

Afterwards, we deal with the \textit{normalized error criterion}, 
where we especially investigate the dependence 
of $n(\epsilon' \cdot \epsilon_d^{\rm init},d; P_I(H_d))$
on $1/\epsilon'$ for $\epsilon'\in(0,1)$. 
That is, we search for the minimal number of information operations
needed to improve the initial error 
by a factor $\epsilon'$ less than one.

To avoid triviality we will assume $\epsilon_d^{\rm init}>0$, 
for every $d\in\N$, in both cases, because otherwise we have
strong polynomial tractability by default.
From this assumption it follows
that $\lambda_1>0$, which simply means that
$S_d$ is not the zero operator.
Moreover, note that in the case of antisymmetric problems, 
if the number of antisymmetric coordinates, \ie the set $I=I(d)$, 
grows with the dimension, the condition
$\epsilon_d^{\rm init}>0$ (for every $d\in\N$) even implies that
\begin{gather*}
				\lambda_1 \geq \lambda_2 \geq \ldots > 0.
\end{gather*}
Finally, we always assume $\lambda_2>0$, because otherwise $S_d$
is equivalent to a continuous linear functional 
which can be solved exactly
with one information operation; see Novak and Wo{\'z}niakowski~\cite[p.176]{NW08}.\\
\vspace{5pt}

For the study of tractability for the absolute error criterion 
we use a slightly modified version of Theorem 5.1, \cite{NW08}. 
It deals with the more general situation of arbitrary compact linear operators 
between Hilbert spaces.
In contrast to Novak and Wo\'zniakowski we drop the 
(hidden) condition $\epsilon_d^{\rm init}=1$ 
for the initial error in dimension $d$.
For the sake of completeness a proof can be found in the appendix.
If we denote Riemann's zeta function by $\zeta$ the
assertion reads as follows.

\begin{prop}\label{prop_NW}
			Consider a family of compact linear operators $\{T_d \colon F_d \nach G_d \sep d\in\N\}$
			between Hilbert spaces and the absolute error criterion in the worst case setting. 
			Furthermore, for $d\in\N$ let $(\lambda_{d,i})_{i\in\N}$ denote the non-negative sequence 
			of eigenvalues of ${T_d}^\dagger T_d$ \wrt a non-increasing ordering.
			\begin{itemize}
			\item If $\{T_d\}$ is polynomially tractable with the constants
						$C,p>0$ and $q\geq 0$ then for all $\tau > p/2$ we have
						\begin{gather}\label{sup_condition}
										C_\tau = \sup_{d\in\N} \frac{1}{d^{r}} \left( \sum_{i=f(d)}^\infty \lambda_{d,i}^\tau \right)^{1/\tau} < \infty,
						\end{gather}
						where $r=2q/p$ and $f\colon \N \nach \N$ with 
						$f(d)=\ceil{(1+C) \, d^q}$.\\
						In this case $C_\tau \leq C^{2/p} \, \zeta(2\tau/p)^{1/\tau}$.
			\item If \link{sup_condition} is satisfied for some parameters $r \geq 0$, $\tau >0$ 
						and a	function $f\colon\N \nach \N$ such that 
						$f(d)=\ceil{C\cdot \left(\min{\epsilon_d^{\rm init},1}\right)^{-p}\cdot d^{q}}$, 
						where $C>0$ and $p,q \geq 0$, then the problem is polynomially tractable
						and $n(\epsilon,d)\leq (C+C_\tau^{\tau}) \, \epsilon^{-\max{p,2\tau}} \, d^{\max{q,r\tau}}$
						for every $d\in\N$ and any $\epsilon \in (0,1]$.
%
			\end{itemize}
\end{prop}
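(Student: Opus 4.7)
The plan is to base everything on the standard observation that, for Hilbert space problems in the worst case setting, the $n$-th minimal error is $e(n,d)=\sqrt{\lambda_{d,n+1}}$, and so
$$n(\epsilon,d) = \#\bigl\{i\in\N\sep \lambda_{d,i}>\epsilon^2\bigr\}.$$
Both implications then reduce to elementary manipulations of this identity, so there is no need to invoke the abstract machinery of information-based complexity.

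For the necessary direction, I would first apply the tractability bound at $\epsilon=1$ to deduce $\lambda_{d,i}\leq 1$ as soon as $i\geq f(d)=\lceil(1+C)d^q\rceil$; the reason for inflating from the naive $\lceil Cd^q\rceil$ is precisely to guarantee $i>Cd^q\geq n(1,d)$, so that fewer than $i$ eigenvalues exceed $1$. For such $i$ I next apply the hypothesis at every $\epsilon\in(0,\sqrt{\lambda_{d,i}})\subset(0,1]$, which forces $n(\epsilon,d)\geq i$ and hence $\epsilon\leq (Cd^q/i)^{1/p}$. Letting $\epsilon\nearrow\sqrt{\lambda_{d,i}}$ yields the pointwise tail estimate $\lambda_{d,i}\leq (Cd^q/i)^{2/p}$ for every $i\geq f(d)$. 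Raising to the $\tau$-th power, summing, and recognising $\sum_{i\geq 1}i^{-2\tau/p}\leq\zeta(2\tau/p)<\infty$ (which is where the restriction $\tau>p/2$ enters), I obtain
$$\Bigl(\sum_{i\geq f(d)}\lambda_{d,i}^\tau\Bigr)^{1/\tau}\leq C^{2/p}\,d^{2q/p}\,\zeta(2\tau/p)^{1/\tau}.$$
Dividing by $d^r$ with $r=2q/p$ and taking the supremum over $d$ produces the claimed $C_\tau\leq C^{2/p}\zeta(2\tau/p)^{1/\tau}$.

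For the sufficient direction, I fix $\epsilon\in(0,1]$ and $d\in\N$, observe that we may assume $\epsilon\leq\epsilon_d^{\rm init}$ (otherwise $n(\epsilon,d)=0$), and split
$$n(\epsilon,d) = \underbrace{\#\bigl\{i<f(d)\sep \lambda_{d,i}>\epsilon^2\bigr\}}_{H} + \underbrace{\#\bigl\{i\geq f(d)\sep \lambda_{d,i}>\epsilon^2\bigr\}}_{T}.$$
The head is trivially at most $f(d)-1\leq C\left(\min{\epsilon_d^{\rm init},1}\right)^{-p}d^q$, and since $\epsilon\leq\min{\epsilon_d^{\rm init},1}$ this is in turn dominated by $C\epsilon^{-p}d^q$. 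The tail I control by a Markov-type argument: every index counted in $T$ contributes at least $\epsilon^{2\tau}$ to the sum $\sum_{i\geq f(d)}\lambda_{d,i}^\tau$, which by \link{sup_condition} is at most $C_\tau^\tau d^{r\tau}$, so $T\leq C_\tau^\tau d^{r\tau}\epsilon^{-2\tau}$. Adding the two contributions and absorbing $\epsilon^{-p}$ into $\epsilon^{-\max{p,2\tau}}$ (using $\epsilon\leq 1$) and $d^q$ into $d^{\max{q,r\tau}}$ (using $d\geq 1$) yields the advertised bound. The only delicate point throughout is the restriction $\epsilon\in(0,1]$ inherent in the definition of polynomial tractability, which is exactly what forces both the inflation by $1+C$ in the necessary direction and the truncation $\min{\cdot,1}$ in the permitted form of $f(d)$ on the sufficient side; this bookkeeping is the small price for dropping the hidden normalisation $\epsilon_d^{\rm init}=1$ implicitly assumed in Theorem~5.1 of \cite{NW08}.
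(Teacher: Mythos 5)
Your proof is correct and arrives at exactly the same bounds as the paper's argument, but the route you take in the sufficient direction is organized differently and is arguably cleaner. For the necessary direction you essentially follow the paper: both proofs exploit $n(\epsilon,d)=\#\{i : \lambda_{d,i}>\epsilon^2\}$ together with the non-increasing ordering, and your version of letting $\epsilon\nearrow\sqrt{\lambda_{d,i}}$ for a fixed $i\geq f(d)$ gives the pointwise bound $\lambda_{d,i}\leq(Cd^q/i)^{2/p}$ directly, whereas the paper parameterizes by $\epsilon$, sets $i=\lfloor C\epsilon^{-p}d^q\rfloor+1$, and obtains the marginally weaker $\lambda_{d,i}\leq(Cd^q/(i-1))^{2/p}$; both lead to the identical $\zeta$-bound for $C_\tau$ once one recognizes that the inflation $f(d)=\lceil(1+C)d^q\rceil>Cd^q\geq n(1,d)$ guarantees $\sqrt{\lambda_{d,i}}\leq 1$ so that the tractability hypothesis is applicable. (One small nit: the step $\sum_{i\geq 1}i^{-2\tau/p}\leq\zeta(2\tau/p)$ is actually an equality; what you really use is $\sum_{i\geq f(d)}i^{-2\tau/p}\leq\zeta(2\tau/p)$.) For the sufficient direction the paper works on the level of individual eigenvalues: it inverts the tail sum to obtain the decay rate $\lambda_{d,n+1}\leq C_\tau d^r((n+1)-f(d)+1)^{-1/\tau}$, then solves for the threshold $n^*$ at which this drops below $\epsilon^2$, which requires a bit of ceiling bookkeeping. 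Your head/tail split $n(\epsilon,d)=H+T$ with the Markov bound $T\,\epsilon^{2\tau}\leq\sum_{i\geq f(d)}\lambda_{d,i}^\tau\leq C_\tau^\tau d^{r\tau}$ dispenses with the rank-wise inversion entirely and reads more transparently, while producing the identical final estimate $n(\epsilon,d)\leq(f(d)-1)+C_\tau^\tau d^{r\tau}\epsilon^{-2\tau}\leq(C+C_\tau^\tau)\epsilon^{-\max\{p,2\tau\}}d^{\max\{q,r\tau\}}$. Both approaches therefore yield the same constants; the Markov formulation buys you a shorter proof of the tail estimate, while the paper's formulation makes the individual eigenvalue decay explicit, which is sometimes useful in its own right.
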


Let us add some comments on this result.
Since, clearly,
\begin{gather*} 
				1 \leq \left(\min{\epsilon_d^{\rm init},1}\right)^{-p}
				\quad \text{for all} \quad p\geq0
\end{gather*}
\autoref{prop_NW} provides a characterization
for (strong) polynomial tractability, similar to \cite[Theorem 5.1]{NW08}. 
But, compared with the assertions from the authors of \cite{NW08},
our result yields the essential advantage that the given estimates 
incorporate the initial error $\epsilon_d^{\rm init}$.
Hence, if $\epsilon_d^{\rm init}$ is sufficiently small then we can conclude 
polynomial tractability while ignoring a larger set of eigenvalues in the
summation \link{sup_condition}. 
%

Observe that the first statement does not cover any assertion about the initial error,
since $f(d)\geq2$. 
Hence, it might happen that we have (strong) polynomial tractability 
though the largest eigenvalue $\lambda_{d,1}=(\epsilon_d^{\rm init})^2$ 
tends faster to infinity than any polynomial.
To this end, for $d\in\N$, consider the sequences $(\lambda_{d,m})_{m\in\N}$ given by
\begin{gather*}
				\lambda_{d,1}=e^{2d} \quad \text{and} \quad \lambda_{d,m}=\frac{1}{m}, \quad \text{for} \quad m\geq 2.
\end{gather*}
Here, obviously, the initial error grows 
exponentially fast to infinity, but nevertheless the
second point of \autoref{prop_NW} shows that 
$\{S_d\}$ is strongly polynomially tractable,
since \link{sup_condition} holds with $r=p=q=0$, 
and $C=\tau=2$.
\vspace{5pt}

Let us now return to our $I$-(anti-)symmetric tensor product problems
$S_d$ as defined in \autoref{sect_OptAlgos}.
Therefore, let $\leer \neq I_d=\{i_1, \ldots, i_{\#I_d}\} \subset\{1,\ldots,d\}$ and $P_{I_d}\in\{\SI_{I_d},\AI_{I_d}\}$
for every $d>1$.
We start by using \autoref{prop_NW} to conclude a simple necessary condition for 
(strong) polynomial tractability of $\{S_d\}$ in the worst case setting
\wrt the absolute error criterion.
Recall that $\psi\colon \N \nach \nabla_d$ defines a rearrangement of the parameter set $\nabla_d$ given in \link{def_nabla}.
That is,
\begin{gather}\label{reordered_eigenvalues}
				\{\lambda_{d,\psi(v)} \sep v\in\N\}=\left\{\lambda_{d,k}=\prod_{l=1}^d \lambda_{k_l} \sep k \in \nabla_d \right\}
\end{gather}
denotes the set of eigenvalues of ${S_d}^\dagger S_d$ with respect to a non-increasing ordering, 
see \autoref{theo_opt_algo}.

\begin{lemma}[General necessary conditions]\label{prop_general}
					The fact that $\{S_d\}$ is polynomially tractable
					with the constants $C,p>0$ and $q\geq 0$ implies that 
					$\lambda=(\lambda_m)_{m\in\N} \in \l_\tau$ for all $\tau > p/2$.
					Moreover, for	any such $\tau$ and all $d\in\N$ the following estimate holds:
					\begin{gather*}
								\frac{1}{\lambda_{d,\psi(1)}^\tau} \sum_{k\in\nabla_d} \lambda_{d,k}^\tau 
								\leq (1+C)\, d^q
									+ C^{2\tau/p} \, \zeta\left(\frac{2\tau}{p}\right) \left( \frac{d^{2q/p}}{\lambda_{d,\psi(1)}} \right)^\tau.
					\end{gather*}
\end{lemma}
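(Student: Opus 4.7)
The plan is to derive both assertions as direct consequences of the first part of \autoref{prop_NW}, applied to the family $\{T_d\} = \{S_d \colon P_{I_d}(H_d) \nach G_d\}$. By \autoref{theo_opt_algo}, the eigenvalues of $T_d^\dagger T_d$ in non-increasing order are precisely $\lambda_{d,\psi(v)}$, $v\in\N$, and thus coincide with the reordered sequence in~\link{reordered_eigenvalues}. Therefore polynomial tractability with constants $C,p>0$ and $q\geq 0$ directly yields
\begin{gather*}
	C_\tau = \sup_{d\in\N} \frac{1}{d^{r}} \left( \sum_{v = f(d)}^\infty \lambda_{d,\psi(v)}^\tau \right)^{1/\tau} \leq C^{2/p} \, \zeta(2\tau/p)^{1/\tau} < \infty
\end{gather*}
for every $\tau>p/2$, where $r=2q/p$ and $f(d)=\ceil{(1+C)\, d^{q}}$.

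The first claim $\lambda \in \l_\tau$ is then obtained by specializing to $d=1$: for the univariate case no (anti-) symmetry is present, so $\lambda_{1,\psi(v)} = \lambda_{\psi(v)}$ coincides with the non-increasing rearrangement of $\lambda$ itself, and the finiteness of $C_\tau$ forces the tail (and hence all) of $\lambda^\tau$ to be summable.

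For the second claim I would split the full sum at the threshold $f(d)$:
\begin{gather*}
	\sum_{k\in\nabla_d} \lambda_{d,k}^\tau = \sum_{v=1}^\infty \lambda_{d,\psi(v)}^\tau = \sum_{v=1}^{f(d)-1} \lambda_{d,\psi(v)}^\tau + \sum_{v=f(d)}^\infty \lambda_{d,\psi(v)}^\tau.
\end{gather*}
The first (finite) part is estimated by monotonicity: each of the at most $f(d)-1 \leq (1+C)\,d^q$ summands is bounded by $\lambda_{d,\psi(1)}^\tau$. The tail is controlled via the above bound on $C_\tau$, giving
\begin{gather*}
	\sum_{v=f(d)}^\infty \lambda_{d,\psi(v)}^\tau \leq C_\tau^\tau \, d^{r\tau} \leq C^{2\tau/p} \, \zeta(2\tau/p) \, d^{2q\tau/p}.
\end{gather*}
Dividing the resulting inequality by $\lambda_{d,\psi(1)}^\tau$ and collecting the powers of $d$ produces the claimed estimate.

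There is no real obstacle here beyond bookkeeping; the only point requiring slight care is the bound $\lceil (1+C)d^q \rceil -1 \leq (1+C)d^q$, which is used to avoid an artificial extra $+1$ in the leading term. The conclusion follows because $r\tau = 2q\tau/p$, so the second term reassembles exactly as $C^{2\tau/p}\,\zeta(2\tau/p)\bigl(d^{2q/p}/\lambda_{d,\psi(1)}\bigr)^\tau$.
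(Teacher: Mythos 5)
Your argument for the second estimate is the same as the paper's: apply the first part of Proposition~\ref{prop_NW}, split the sum at $f(d)$, bound the finite head by $\lambda_{d,\psi(1)}^\tau(f(d)-1)$ and the tail by $C_\tau^\tau d^{r\tau}$, then divide by $\lambda_{d,\psi(1)}^\tau$. That portion is correct and matches the paper step for step.

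For the first claim ($\lambda\in\l_\tau$), however, you take a genuinely different and in fact shorter route. You specialize to $d=1$, where $\nabla_1=\N$, $\psi=\id$, and the eigenvalues of $W_1$ are just $\lambda$ itself, and read off $\lambda\in\l_\tau$ directly from finiteness of the tail $\sum_{v\geq f(1)}\lambda_v^\tau$. This works because polynomial tractability is defined as a bound valid for \emph{all} $d\in\N$, including $d=1$, and because in dimension one the (anti-) symmetry constraint is vacuous. The paper instead argues for arbitrary fixed $d$: it introduces a cube $Q_{d,s}$ containing the indices of the $f(d)-1$ largest eigenvalues, then exhibits the explicit subset $R_{d,s}=\{(1,2,\ldots,d-1,m)\sep m>s\}\subset\nabla_d\setminus Q_{d,s}$ (valid for every $I_d$ and both symmetrizers), so that $(\lambda_1\cdots\lambda_{d-1})^\tau\sum_{m>s}\lambda_m^\tau$ is dominated by the convergent tail. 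The paper's route is more robust in the sense that it extracts the same conclusion from the bound at every dimension and displays exactly which part of $\nabla_d$ carries the tail of $\lambda$, which is in keeping with the factorization arguments used elsewhere in Section~\ref{sect_complexity}; yours is shorter and exploits the fact that dimension $d=1$ is already covered by the tractability hypothesis. Both are correct.
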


\begin{proof}
From \autoref{prop_NW} we know that for $\tau > p/2$ and $r=2q/p$ it is
\begin{gather}\label{sup_condition3}
				\sup_{d\in\N} \frac{1}{d^r} \left( \sum_{v=f(d)}^\infty \lambda_{d, \psi(v)}^\tau \right)^{1/\tau} <\infty,
\end{gather}
where the function $f\colon \N \nach \N$ is given by $f(d)=\ceil{(1+C)\,d^q}$.

Note that for the proof of the first assertion 
we only need to consider the case where
all $\lambda_m$ are strictly positive.
Then the condition \link{sup_condition3}, in particular, implies that 
the sum in the brackets converges for every fixed $d\in\N$.
If we denote the subset of indices $j\in\nabla_d$ of the 
$f(d)-1$ largest eigenvalues $\lambda_{d,\psi(v)}$ by $L_d$
then there exists a natural number $s=s(d)\geq d$ such that 
$L_d$ is completely contained in the cube
\begin{gather}\label{cube_Qds}
				Q_{d,s} = \{1,\ldots,s\}^d.
\end{gather}
Hence, we can	crudely estimate the sum from below by
\begin{gather*}
				\sum_{j\in\nabla_d \setminus Q_{d,s}} \lambda_{d,j}^\tau
				\leq \sum_{j\in\nabla_d \setminus L_d} \lambda_{d,j}^\tau
				= \sum_{v=f(d)}^\infty \lambda_{d, \psi(v)}^\tau < \infty.
\end{gather*}
Since $R_{d,s}=\{j=(1,2,\ldots,d-1,m)\in\N^d \sep m > s\}$ is a subset of $\nabla_d \setminus Q_{d,s}$,
independently of the concrete (anti-) symmetrizer $P_{I_d}$, 
where $P\in\{\SI,\AI\}$, we obtain
\begin{gather*}
				(\lambda_{1}\cdot\lambda_{2}\cdot\ldots \cdot\lambda_{d-1})^\tau \sum_{m=s+1}^\infty \lambda_{m}^\tau
				= \sum_{j\in R_{d,s}} \lambda_{d,j}^\tau \leq \sum_{j\in\nabla_d \setminus Q_{d,s}} \lambda_{d,j}^\tau.
\end{gather*}
Thus, for each fixed $d\in\N$ the tail series $\sum_{m=s(d)+1}^\infty \lambda_m^\tau$ is finite, 
which is only possible if $\norm{\lambda \sep \l_\tau}<\infty$.
Hence, $\lambda \in \l_\tau$ is necessary for (strong) polynomial tractability.

Let us turn to the second assertion.
Obviously, \link{sup_condition3} implies the existence of some constant $C_1>0$
such that
\begin{gather*}
				\sum_{v=f(d)}^\infty \lambda_{d,\psi(v)}^\tau \leq C_1 d^{r\tau} \quad \text{for all} \quad d\in\N.
\end{gather*}
Indeed, \autoref{prop_NW} yields that we can take $C_1 = C^{2\tau/p} \zeta(2\tau/p)$.
The rest of the sum can also be bounded easily for any $d\in\N$,
\begin{gather*}
				\sum_{v=1}^{f(d)-1} \lambda_{d,\psi(v)}^\tau \leq \lambda_{d,\psi(1)}^\tau (f(d)-1),
\end{gather*}
due to the ordering provided by $\psi$.
Since $\sum_{k\in\nabla_d} \lambda_{d,k}^\tau = \sum_{v=1}^\infty \lambda_{d,\psi(v)}^\tau$, 
it remains to show that $f(d)-1 \leq (1 + C) d^{q}$
for every $d\in\N$ with $\lambda_{d,\psi(1)}>0$,
which is also obvious due to the definition of $f$.
\end{proof}

Since we know that antisymmetric problems are easier than symmetric problems
we have to distinguish these cases in order to conclude sharp conditions
for tractability.

\subsection{Tractability of symmetric problems (absolute error)}\label{sect_symprob}
Beside the general assertion $\lambda \in \l_\tau$, 
we start with necessary conditions for (strong) polynomial tractability
in the symmetric setting.
By $b_d$ we denote the amount of coordinates without
symmetry conditions in dimension $d$.

\begin{prop}[Necessary conditions, symmetric case]
				Let $\{S_d\}$ be the problem considered in \autoref{prop_general} and assume $P=\SI$.
				\begin{itemize}
								\item If $\{S_d\}$ is polynomially tractable and $\lambda_1 \geq 1$ then $b_d \in \0(\ln d)$.
								\item If $\{S_d\}$ is strongly polynomially tractable and $\lambda_1\geq1$ then $b_d \in \0(1)$ and $\lambda_2<1/\lambda_1$.
				\end{itemize}
\end{prop}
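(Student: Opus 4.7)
The approach combines the general tractability estimate of \autoref{prop_general} with the product structure of $\nabla_d$ in the symmetric case, plus, for the sharper strong assertion $\lambda_2<1/\lambda_1$, a direct count of eigenvalues exceeding $1$.

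First, I would specialise \autoref{prop_general} to $P=\SI$. The top eigenvalue equals $\lambda_{d,\psi(1)}=\lambda_1^d$, attained at the all-ones multi-index. Since in $\nabla_d$ the $b_d$ coordinates outside $I_d$ are unconstrained while the $a_d$ coordinates in $I_d$ are non-decreasing, the sum of $\tau$-powers factorises as
$$\sum_{k\in\nabla_d}\lambda_{d,k}^\tau=\Bigl(\sum_{k_{i_1}\leq\cdots\leq k_{i_{a_d}}}\prod_{l=1}^{a_d}\lambda_{k_{i_l}}^\tau\Bigr)\cdot\norm{\lambda\sep\l_\tau}^{b_d\tau}.$$
Bounding the ordered sum from below by its all-ones summand $\lambda_1^{a_d\tau}$ and dividing by $\lambda_{d,\psi(1)}^\tau=\lambda_1^{d\tau}$ produces
$$\frac{1}{\lambda_{d,\psi(1)}^\tau}\sum_{k\in\nabla_d}\lambda_{d,k}^\tau\geq\rho^{\,b_d\tau}\quad\text{with}\quad\rho:=\norm{\lambda\sep\l_\tau}/\lambda_1>1,$$
where strict positivity of $\rho-1$ uses the standing hypothesis $\lambda_2>0$. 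Under $\lambda_1\geq1$, so $\lambda_1^d\geq1$, the right-hand side of the inequality in \autoref{prop_general} is bounded by a polynomial in $d$ in the polynomial case and by a constant in the strong case. Comparison with $\rho^{b_d\tau}$ and taking logarithms then gives $b_d\in\0(\ln d)$ and $b_d\in\0(1)$, respectively.

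It remains to establish $\lambda_2<1/\lambda_1$ in the strong case. The strategy is to show that $\lambda_1\geq1$ together with strong polynomial tractability actually forces $\lambda_1=1$ and then $\lambda_2<1$. Since $b_d$ is now bounded, $a_d=d-b_d\to\infty$. To see that $\lambda_1=1$, assume for contradiction $\lambda_1>1$. In the generic subcase $\lambda_2<\lambda_1$, set $\alpha:=\ln\lambda_1/\ln(\lambda_1/\lambda_2)>0$; then for every integer $j$ with $0\leq j\leq\min{a_d,\lfloor\alpha d\rfloor}$ the multi-index $k\in\nabla_d$ whose $I_d$-block has $j$ trailing twos and ones elsewhere (including in all free coordinates) satisfies $\lambda_{d,k}=\lambda_1^{d-j}\lambda_2^j>1$, producing $\Omega(d)$ multi-indices above the threshold $1$ and hence $n(1,d)\gtrsim d$, contradicting $n(1,d)\leq C$. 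The degenerate subcase $\lambda_2=\lambda_1>1$ is even simpler: every $k\in\nabla_d$ with entries in $\{1,2\}$ satisfies $\lambda_{d,k}=\lambda_1^d>1$, yielding $(a_d+1)\cdot 2^{b_d}$ such indices. Hence $\lambda_1=1$ is forced. If moreover $\lambda_2=1$, then every $k\in\nabla_d$ with entries in $\{1,2\}$ satisfies $\lambda_{d,k}=1$, so $(a_d+1)\cdot 2^{b_d}\to\infty$ eigenvalues lie above any threshold $\epsilon^2<1$; thus $n(\epsilon,d)$ is unbounded for fixed $\epsilon\in(0,1)$, again contradicting strong polynomial tractability. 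Therefore $\lambda_2<1=1/\lambda_1$.

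The main obstacle is promoting $\lambda_2<\lambda_1$---the naive consequence of the $\l_\tau$-type estimate from \autoref{prop_general}---to the strict condition $\lambda_2<1/\lambda_1$. The proof reaches it by exploiting two separate effects of the absolute error criterion at $\epsilon=1$: the exponential blow-up of the initial error $\lambda_1^{d/2}$ when $\lambda_1>1$ forces $\lambda_1=1$, while the persistence of infinitely many unit eigenvalues when $\lambda_2=1$ then forces $\lambda_2<1$.
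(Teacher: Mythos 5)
Your argument for the bounds on $b_d$ (first bullet and the $b_d\in\0(1)$ claim in the second) is essentially the paper's: both split $\sum_{k\in\nabla_d}\lambda_{d,k}^\tau$ over the free and constrained coordinates as in~\link{splitting}, bound the ordered factor from below by its all-ones summand, and read off the logarithmic (resp.\ constant) bound on $b_d$ from the polynomial estimate of \autoref{prop_general} using $\lambda_{d,\psi(1)}=\lambda_1^d\geq 1$ and $\rho=\norm{\lambda\sep\l_\tau}/\lambda_1>1$ (which requires $\lambda_2>0$).

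For the strict inequality $\lambda_2<1/\lambda_1$ you take a genuinely different, and somewhat more elementary, route. The paper argues a single contrapositive: assuming $\lambda_2\geq 1/\lambda_1$ it exhibits roughly $d/2$ multi-indices in $\nabla_d$ of the form $(1,\ldots,1,2,\ldots,2)$ whose eigenvalues $\lambda_1^{d-l}\lambda_2^l\geq\lambda_1^{d-2l}\geq 1$, and then feeds this into the tail-sum bound inherited from \autoref{prop_NW}, getting the divergence of $\sum_{v\geq\ceil{1+C}}\lambda_{d,\psi(v)}^\tau$. You instead work directly with the definition of strong polynomial tractability at a fixed accuracy level, bypassing the $\l_\tau$-summability machinery: at $\epsilon=1$, SPT gives $n(1,d)\leq C$, while $\lambda_1>1$ yields unboundedly many multi-indices of the same $(1,\ldots,1,2,\ldots,2)$ type with eigenvalue strictly above $1$ (using $j<\alpha d$ with $\alpha=\ln\lambda_1/\ln(\lambda_1/\lambda_2)$, once $a_d\to\infty$ is secured from $b_d\in\0(1)$), forcing $\lambda_1=1$; then $\lambda_2=1$ gives $(a_d+1)2^{b_d}\to\infty$ eigenvalues exactly equal to $1$, ruling out any fixed $\epsilon\in(0,1)$. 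Both proofs count the same family of two-valued multi-indices, but yours splits into two sub-steps and, as a bonus, isolates the stronger fact that strong polynomial tractability with $\lambda_1\geq1$ in fact forces $\lambda_1=1$, which the paper's contrapositive does not make explicit. The only cosmetic slip is the boundary index in "$0\leq j\leq\min\{a_d,\lfloor\alpha d\rfloor\}$": when $\alpha d$ is an integer the endpoint gives $\lambda_{d,k}=1$ exactly, not $>1$; shift the cap by one and the count is still $\Omega(d)$.
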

\begin{proof}
Assume $\lambda_1\geq 1$ and let $\tau$ be given by \autoref{prop_general}.
Then, independent of the amount of symmetry conditions,
we have $\lambda_{d,\psi(1)}=\lambda_1^d \geq 1$ and
there exist absolute constants $r\geq 0$ and $C>1$ such that
\begin{gather}\label{stp_estimate}
			\frac{1}{\lambda_1^{\tau d}} \sum_{k\in\nabla_d} \lambda_{d,k}^\tau
			\leq C \, d^r, \quad d\in\N,
\end{gather}
due to \autoref{prop_general}.
In the case of strong polynomial tractability we even have $r=0$.
For $d\geq 2$ we use the product structure of $\lambda_{d,k}$, $k\in\nabla_d$, and 
split the sum on the left 
\wrt the coordinates with and without symmetry conditions. 
Hence, we conclude
\begin{gather}\label{splitting}
			\sum_{k=(h,j)\in\nabla_d} \lambda_{d,k}^\tau 
			= \sum_{j\in \N^{b_d}} \lambda_{b_d,j}^\tau 
					\sum_{\substack{h\in\N^{a_d},\\h_1\leq\ldots\leq h_{a_d}}} \lambda_{a_d,h}^\tau
			= \left( \sum_{m=1}^\infty \lambda_m^\tau \right)^{b_d} 
					\sum_{\substack{h\in\N^{a_d},\\h_1\leq\ldots\leq h_{a_d}}} \lambda_{a_d,h}^\tau, 
			\qquad d=a_d+b_d\geq 2,
\end{gather}
which leads to
\begin{gather*}
			\left( \sum_{m=1}^\infty \left(\frac{\lambda_m}{\lambda_1}\right)^\tau \right)^{b_d} 
					\sum_{\substack{h\in\N^{a_d},\\h_1\leq\ldots\leq h_{a_d}}} \prod_{l=1}^{a_d} 
							\left( \frac{\lambda_{h_l}}{\lambda_1}\right)^\tau
			\leq C \, d^r.
\end{gather*}
In any case the second sum in the above inequality is bounded from below by $1$.
Thus, we conclude that 
$(1+\lambda_2^\tau/\lambda_1^\tau)^{b_d} \leq \left( \sum_{m=1}^\infty \lambda_m^\tau/\lambda_1^\tau \right)^{b_d}$
needs to be polynomially bounded from above.
Since we always assume $\lambda_2>0$ this leads to the claimed bounds on $b_d$ in the
case of (strong) polynomial tractability.

It remains to show that $\lambda_2<1/\lambda_1$ 
is necessary for strong polynomial tractability.
To this end, assume for a moment $\lambda_2\geq 1/\lambda_1$.
Then it is easy to see that 
(independent of the number of symmetry conditions) 
there are at least $1+\floor{d/2}$ different $k\in\nabla_d$ 
such that $\lambda_{d,k}\geq 1$. 
Namely, for $l=0,\ldots,\floor{d/2}$ we can take the first $d-l$ coordinates of $k\in\nabla_d$
equal to one. To the remaining coordinates we assign the value two.

In other words, we have $\lambda_{d,\psi(1+\floor{d/2})} \geq 1$.
On the other hand, strong polynomial tractability implies 
$\sum_{v=\ceil{1+C}}^\infty \lambda_{d,\psi(v)}^\tau \leq C_1$ 
for some absolute constants $\tau,C,C_1>0$ and all $d\in\N$; 
see~\link{sup_condition3}.
Hence, for every $d\geq 2\,\ceil{1+C}$,
\begin{gather*}
			C_1 
			\geq \sum_{v=\ceil{1+C}}^\infty \lambda_{d,\psi(v)}^\tau 
			\geq \sum_{v=\ceil{1+C}}^{1+\floor{d/2}} \lambda_{d,\psi(v)}^\tau 
			\geq \lambda_{d,\psi(1+\floor{d/2})}^\tau (2+\floor{d/2}-\ceil{1+C}) 
			\geq \floor{d/2}+1-\ceil{C},
\end{gather*}
because of the ordering provided by $\psi$.
Obviously, this is a contradiction. 
Thus, we have $\lambda_2<1/\lambda_1$ and the proof is complete.
\end{proof}

Note in passing that the previous argument 
can also be used to show that 
(independent of the number of symmetry conditions) 
the information complexity $n(\epsilon,d)$ 
needs to grow at least linearly in $d$
if we assume $\lambda_2 \geq 1/\lambda_1$.
In particular, we cannot have strong polynomial tractability if $\lambda_1=\lambda_2=1$.

We continue the analysis of $I$-symmetric problems with respect to the absolute error criterion
by proving that the stated necessary conditions are also sufficient for (strong) polynomial tractability.
To this end, we need a rather technical preliminary lemma 
that can be proven by elementary induction arguments. 
For the convenience of the reader we included also this proof in the appendix.
\newpage
\begin{lemma}\label{lemma_symNEW}
				Let $(\mu_m)_{m\in\N}$ be a non-increasing sequence of 
				non-negative real numbers with $\mu_1>0$.
				Then, for all $V\in\N_0$ and every $d\in\N$, it holds
				\begin{gather}\label{estimate_VNEW}
								\sum_{\substack{k\in\N^d,\\1\leq k_1\leq\ldots\leq k_d}} \mu_{d,k}
								\leq \mu_1^d \, d^V \left( 1 + V + \sum_{L=1}^d \mu_1^{-L} 
										 \sum_{ \substack{ j^{(L)}\in\N^L,\\V+2\leq j_1^{(L)}\leq\ldots\leq j_L^{(L)} } } \mu_{L,j^{(L)}} \right).
				\end{gather}
\end{lemma}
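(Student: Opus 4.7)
The plan is to partition the LHS by the statistic $L := \#\{i\in\{1,\ldots,d\} : k_i \geq V+2\}$, which ranges over $\{0,1,\ldots,d\}$. Since the summation enforces $k_1\leq\ldots\leq k_d$, for each fixed $L$ exactly the last $L$ coordinates exceed $V+1$ while the first $d-L$ lie in $\{1,\ldots,V+1\}$. Defining
\begin{gather*}
A_n := \sum_{1\leq h_1 \leq \ldots \leq h_n \leq V+1}\mu_{n,h} \text{ for } n\geq 1, \quad A_0:=1, \qquad S_L := \sum_{V+2\leq j_1 \leq \ldots \leq j_L}\mu_{L,j} \text{ for } L\geq 1, \quad S_0:=1,
\end{gather*}
this partition yields the factorization $\text{LHS} = \sum_{L=0}^d A_{d-L}\cdot S_L$. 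The $S_L$'s already match the sums appearing on the RHS verbatim, so the real work lies in bounding the small-index blocks $A_n$.

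For this, I would use that $(\mu_m)$ is non-increasing to estimate $\mu_{h_i}\leq \mu_1$ in each summand, together with the elementary fact that the number of non-decreasing $n$-tuples in $\{1,\ldots,V+1\}$ equals $\binom{V+n}{V}$; this yields the crude but sufficient bound $A_n \leq \mu_1^n \binom{V+n}{V}$ for every $n\geq 0$. The proof then reduces to two combinatorial inequalities, both provable by an elementary induction on $V$ (the base $V=0$ being trivial, and the inductive step following from the identity $\binom{V+n}{V} = \frac{V+n}{n}\binom{V+n-1}{V}$):
\begin{gather*}
\binom{V+n}{V}\leq n^V \quad \text{for all } n\geq 1, \qquad \binom{V+d}{V}\leq (V+1)\,d^V \quad \text{for all } d\geq 1.
\end{gather*}
Applying the first with $n=d-L\leq d-1$ when $L\geq 1$ gives $A_{d-L}\leq \mu_1^{d-L}d^V$, while the second handles the exceptional $L=0$ term as $A_d\leq \mu_1^d(V+1)d^V$.

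Substituting these bounds into the factorization gives
\begin{gather*}
\text{LHS} \;\leq\; \mu_1^d(V+1)\,d^V \;+\; d^V\sum_{L=1}^d \mu_1^{d-L}S_L \;=\; \mu_1^d\, d^V\left(1+V+\sum_{L=1}^d \mu_1^{-L}S_L\right),
\end{gather*}
which is precisely the stated inequality. The main obstacle I anticipate is correctly diagnosing the asymmetric role of the $L=0$ summand: its combinatorial factor $\binom{V+d}{V}$ cannot be bounded by $d^V$ alone, and the constant $1+V$ inside the parentheses on the RHS is introduced precisely to absorb it. Once this asymmetry is recognized, everything else is routine bookkeeping.
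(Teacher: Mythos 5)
Your decomposition of the left-hand side by the statistic $L=\#\{i: k_i\geq V+2\}$ and the resulting exact factorization $\sum_{L=0}^d A_{d-L}S_L$ are correct, and they constitute a genuinely different route from the paper's. The paper first establishes, by induction on the number of variables, the exact identity obtained by partitioning according to how many coordinates exceed a single fixed threshold $m$, and then performs a second induction on $V$; your single combinatorial partition with the ``fat'' block $\{1,\ldots,V+1\}$ sidesteps the double induction. Your identification of the $L=0$ term as the source of the factor $1+V$ is exactly right.

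There is, however, a concrete error: your first combinatorial inequality $\binom{V+n}{V}\leq n^V$ is false. Already for $V=1$ it reads $n+1\leq n$. What you actually need (and what is true) is $\binom{V+n}{V}\leq (n+1)^V$ for all $n\geq0$: writing $\binom{V+n}{V}=\prod_{i=1}^V \frac{n+i}{i}$, each factor satisfies $\frac{n+i}{i}\leq n+1$ for $i\geq1$. (Incidentally, the identity you quote, $\binom{V+n}{V}=\frac{V+n}{n}\binom{V+n-1}{V}$, decrements $n$ rather than $V$, so it does not support the induction on $V$ you describe; the relevant recursion is $\binom{V+n}{V}=\frac{V+n}{V}\binom{V+n-1}{V-1}$.) With the corrected bound, for $L\geq1$ you obtain $A_{d-L}\leq\mu_1^{d-L}(d-L+1)^V\leq\mu_1^{d-L}\,d^V$ since $d-L+1\leq d$, and combined with $A_d\leq\mu_1^d\binom{V+d}{V}\leq\mu_1^d(V+1)\,d^V$ for the $L=0$ term (your second inequality, which is correct), the rest of the bookkeeping goes through and yields the claimed estimate.
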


Now the sufficient conditions read as follows.
Once again, we denote the number of coordinates 
without symmetry conditions in dimension $d$ by $b_d$.

\begin{prop}[Sufficient conditions, symmetric case]\label{prop_suf_sym}
				Let $\{S_d\}$ be the problem considered in \autoref{prop_general}, assume $P=\SI$ and let $\lambda=(\lambda_m)_{m\in\N}\in \l_{\tau_0}$ for some $\tau_0\in(0,\infty)$.
				\begin{itemize}
								\item If $\lambda_1<1$ then $\{S_d\}$ is strongly polynomially tractable.
								\item If $\lambda_1=1>\lambda_2$ and $b_d \in \0(1)$ then $\{S_d\}$ is strongly polynomially tractable.
								\item If $\lambda_1=1$ and $b_d \in \0(\ln d)$ then $\{S_d\}$ is polynomially tractable.
				\end{itemize}
\end{prop}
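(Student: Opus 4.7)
The plan is to verify the sufficient criterion of the second bullet of \autoref{prop_NW} for the eigenvalue sequence \link{reordered_eigenvalues} of $W_d\big|_{P_{I_d}(H_d)}$. The first case, $\lambda_1<1$, will follow immediately from the monotonicity $n^{\rm sym}(\epsilon,d)\leq n^{\rm ent}(\epsilon,d)$ established in the introduction together with the classical Novak--Wo\'zniakowski characterization of strong polynomial tractability of the entire tensor product problem recalled after the first theorem. For the remaining two cases I therefore assume $\lambda_1=1$, so that the symmetric initial error $\epsilon_d^{\rm init}=\lambda_1^{d/2}=1$ and I may apply \autoref{prop_NW} with $C=1$, $p=q=0$ (hence $f(d)\equiv 1$), reducing the task to bounding $\sum_{k\in\nabla_d}\lambda_{d,k}^\tau$ for some $\tau\geq\tau_0$ by a constant (for SPT) or by a polynomial in $d$ (for polynomial tractability).

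The central tool is the product decomposition \link{splitting}, which yields
\[
\sum_{k\in\nabla_d}\lambda_{d,k}^\tau \;=\; K(\tau)^{b_d}\cdot\Sigma_{a_d}(\tau),\qquad K(\tau):=\sum_{m=1}^\infty\lambda_m^\tau,
\]
where $\Sigma_{a_d}(\tau)$ denotes the ordered sum over the $a_d$ symmetric coordinates. The factor $K(\tau)$ is finite for every $\tau\geq\tau_0$ since $\lambda_m\leq 1$ forces $\lambda_m^\tau\leq\lambda_m^{\tau_0}$. To control $\Sigma_{a_d}(\tau)$, I will invoke \autoref{lemma_symNEW} with $\mu_m=\lambda_m^\tau$ (so $\mu_1=1$) and a free parameter $V\in\N_0$ to obtain
\[
\Sigma_{a_d}(\tau)\;\leq\; a_d^V\biggl(1+V+\sum_{L=1}^{a_d} R_{V,\tau}^{\,L}\biggr),\qquad R_{V,\tau}:=\sum_{m=V+2}^\infty\lambda_m^\tau.
\]
The key calibration step is to choose $V$ and $\tau\geq\tau_0$ so that $R_{V,\tau}\leq 1/2$; then the geometric tail in $L$ is bounded by $1$ and I obtain $\Sigma_{a_d}(\tau)\leq (V+2)\, d^V$, using $a_d\leq d$.

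In case two, $\lambda_2<1$ allows $V=0$, and the estimate $R_{0,\tau}\leq \lambda_2^{\tau-\tau_0}\sum_{m\geq 2}\lambda_m^{\tau_0}$ shows that taking $\tau$ sufficiently large forces $R_{0,\tau}\leq 1/2$; hence $\sum_k\lambda_{d,k}^\tau\leq 2K(\tau)^B$ with $B:=\sup_d b_d<\infty$, bounded independently of $d$, giving SPT via \autoref{prop_NW} with $r=0$. In case three, the summability $\lambda\in\l_{\tau_0}$ yields $\lambda_m\to 0$, so some $V\in\N_0$ satisfies $\lambda_{V+2}<1$, and the same exponential-decay argument produces $R_{V,\tau}\leq 1/2$ for $\tau$ large; combined with $K(\tau)^{b_d}\leq K(\tau)^{c\ln d}=d^{c\ln K(\tau)}$ (valid since $\lambda_2>0$ forces $K(\tau)>1$) this gives $\sum_k\lambda_{d,k}^\tau\leq(V+2)\, d^{V+c\ln K(\tau)}$ and therefore polynomial tractability via \autoref{prop_NW}. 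The main technical obstacle is the simultaneous calibration of $V$ and $\tau$: one must enlarge $V$ just enough to push $\lambda_{V+2}$ strictly below $1$ and then enlarge $\tau$ to drive $R_{V,\tau}$ under $1/2$, while checking that the factor $K(\tau)^{b_d}$ remains respectively bounded (case two) or polynomially bounded (case three) in $d$.
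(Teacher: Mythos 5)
Your proof is correct and follows essentially the same route as the paper for the two cases with $\lambda_1=1$: the product split of the eigenvalue sum (the paper's identity~\link{splitting}) together with \autoref{lemma_symNEW} applied with $\mu=\lambda^\tau$, feeding into the second bullet of \autoref{prop_NW}. The two places where you deviate are both small improvements in presentation rather than genuinely different arguments. First, you dispatch the case $\lambda_1<1$ by citing the monotonicity $n^{\rm sym}(\epsilon,d)\leq n^{\rm ent}(\epsilon,d)$ and the classical Novak--Wo\'zniakowski criterion for the entire tensor product problem, whereas the paper proves it directly by bounding $\sum_{k\in\nabla_d}\lambda_{d,k}^\tau\leq\bigl(\sum_m\lambda_m^\tau\bigr)^d\leq 2^{-d}$; both are equally short, though the direct route keeps the proof self-contained. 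Second, your calibration of $\tau$ is more elementary: you bound $R_{V,\tau}=\sum_{m\geq V+2}\lambda_m^\tau\leq\lambda_{V+2}^{\tau-\tau_0}\sum_{m\geq V+2}\lambda_m^{\tau_0}$ and let $\tau\to\infty$ using $\lambda_{V+2}<1$, while the paper's Step~1 derives the same $\le 1/2$ bound via the polynomial decay $\lambda_m\leq C_{\tau_0}m^{-1/\tau_0}$ and an integral comparison. Your bound requires only the monotonicity of $\lambda$, so it is cleaner; the paper's version yields slightly more quantitative information about how large $\tau$ must be, which is not needed here. All remaining steps (choice of $V$, the resulting bound $\Sigma_{a_d}(\tau)\leq(V+2)d^V$, and the polynomial control $K(\tau)^{b_d}=d^{\mathcal{O}(1)}$ when $b_d\in\0(\ln d)$) match the paper's argument.
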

\begin{proof}
\textit{Step 1}. We start the proof by exploiting the property $\lambda\in\l_{\tau_0}$.
It is easy to see that the ordering of $(\lambda_m)_{m\in\N}$ implies
\begin{gather*}
				m \lambda_m^{\tau_0} 
				\leq \lambda_1^{\tau_0} + \ldots + \lambda_m^{\tau_0} 
				< \sum_{i=1}^\infty \lambda_i^{\tau_0} = \norm{\lambda \sep \l_{\tau_0}}^{\tau_0} < \infty
\end{gather*}
for any $m\in\N$. 
Hence, there exists some $C_{\tau_0}>0$ such that 
$\lambda_m$ is bounded from above by 
$C_{\tau_0} \cdot m^{-r}$ for every $r\leq 1/\tau_{0}$.
Therefore, there is some index such that for every larger $m\in\N$ we have
$\lambda_m<1$. We denote the smallest of these indices by $m_0$.
Similar to the calculation of Novak and Wo{\'z}niakowski 
in \cite[p.180]{NW08} this leads to
\begin{gather*}
				\sum_{m=m_0}^\infty \lambda_m^\tau 
				\leq (p+1) \lambda_{m_0}^\tau + C_{\tau_0}^\tau \int_{m_0+p}^\infty x^{-\tau r} dx 
				= (p+1) \lambda_{m_0}^\tau + \frac{C_{\tau_0}^\tau}{\tau r - 1} \cdot \frac{1}{(m_0+p)^{\tau r -1}}
\end{gather*}
for every $p\in\N_0$ and all $\tau$ such that $\tau r > 1$.
Thus, in particular, with $r=1/\tau_0$ we conclude
\begin{gather*}
				\sum_{m=m_0}^\infty \lambda_m^\tau 
				\leq (p+1) \lambda_{m_0}^\tau + \frac{1/\tau}{1/\tau_0 - 1/\tau} \left( \frac{C_{\tau_0}^{1/(1/\tau_0 - 1/\tau)}}{m_0+p} \right)^{\tau (1/\tau_0-1/\tau)} \quad \text{for all} \quad \tau>\tau_0, p\in\N_0.
\end{gather*}
Note that for a given $\delta>0$ there exists some constant $\tau_1\geq \tau_0$ 
such that for all $\tau > \tau_1$ it is $1/(1/\tau_0 - 1/\tau) \in (\tau_0,\tau_0+\delta)$.
Hence, if $p\in\N$ is sufficiently large then we obtain for all $\tau > \tau_1$
\begin{align*}
				\sum_{m=m_0}^\infty \lambda_m^\tau 
				&\leq (p+1) \lambda_{m_0}^\tau + \frac{\tau_0+\delta}{\tau} \left( \frac{C_1}{m_0+p} \right)^{\tau (1/\tau_0-1/\tau)} \\
				&\leq (p+1) \lambda_{m_0}^\tau + \frac{\tau_0+\delta}{\tau_1} \left( \frac{C_1}{m_0+p} \right)^{\tau/(\tau_0+\delta)},
\end{align*}
where we set $C_1 = \max{1,C_{\tau_0}^{\tau_0+\delta}}$.
Finally, since $\lambda_{m_0}<1$, both the summands tend to zero as $\tau$ approaches infinity.
In particular, there need to exist some $\tau > \tau_1\geq \tau_0$ such that
\begin{gather*}
				\sum_{m=m_0}^\infty \lambda_m^\tau \leq \frac{1}{2}.
\end{gather*}

\textit{Step 2}. All the stated assertions can be seen using the second point of \autoref{prop_NW}.
Indeed, for polynomial tractability, it is sufficient to show that
\begin{gather}\label{sum_pol}
				\sum_{k\in\nabla_d} \lambda_{d,k}^\tau \leq C d^{r \tau}\quad \text{for all} \quad d\in\N
\end{gather}
and some $C,\tau > 0$ as well as some $r\geq 0$.
If this even holds for $r=0$ we obtain strong polynomial tractability.

In the case $\lambda_1<1$ we can estimate the sum on the left of \link{sum_pol}
from above by $( \sum_{m=1}^\infty \lambda_m^\tau )^d$.
Using Step 1 with $m_0=1$ we conclude $\sum_{k\in\nabla_d} \lambda_{d,k}^\tau \leq 2^{-d}$
for some large $\tau > \tau_0$. 
Hence, the problem is strongly polynomially tractable in this case.

For the proof of the remaining points assume $\lambda_1=1$.
In any case $\sum_{k\in\nabla_1} \lambda_{1,k}^\tau \leq \sum_{m=1}^\infty \lambda_m^{\tau_0} = \norm{\lambda \sep \l_{\tau_0}}^{\tau_0} < \infty$ for all $\tau\geq \tau_0$, because of $\lambda \in \l_{\tau_0}$. 
Therefore, we can assume $d\geq 2$ in the following.
Again we split the sum in \link{sum_pol} with respect to the coordinates with and without symmetry conditions,
i.e., for $d=a_d+b_d\geq 2$ we have
\begin{gather}\label{splitting}
				\sum_{k=(h,j)\in\nabla_d} \lambda_{d,k}^\tau 
				= \sum_{j\in\N^{b_d}} \lambda_{b_d,j}^\tau \sum_{\substack{h\in \N^{a_d},\\h_1\leq\ldots\leq h_{a_d}}} \lambda_{a_d, h}^\tau
				= \left( 1 + \sum_{m=2}^\infty \lambda_{m}^\tau \right)^{b_d} \sum_{\substack{h\in \N^{a_d},\\h_1\leq\ldots\leq h_{a_d}}} \lambda_{a_d, h}^\tau.
\end{gather}

If $\lambda_2<1$ and $b_d$ is universally bounded then
the first factor can be bounded by a constant and the second factor 
can be estimated using \autoref{lemma_symNEW} with $V=0$, $d$ replaced by $a_d$ 
and $\mu$ replaced by $\lambda^\tau$.
It follows that if $\tau$ is large enough we have
\begin{gather*}
				\sum_{\substack{h\in \N^{a_d},\\h_1\leq\ldots\leq h_{a_d}}} \lambda_{a_d, h}^\tau 
				\leq 1 + \sum_{L=1}^{a_d} \sum_{ \substack{ j^{(L)}\in\N^L,\\2\leq j_1^{(L)}\leq\ldots\leq j_L^{(L)} } } \lambda_{L,j^{(L)}}^\tau
				\leq 1 + \sum_{L=1}^{a_d} \left( \sum_{m=2}^\infty \lambda_{m}^\tau \right)^L
				\leq 1 + \sum_{L=1}^\infty 2^{-L} = 2,
\end{gather*}
where we again used Step 1 and the properties of geometric series.
Thus, $\sum_{k\in\nabla_d} \lambda_{d,k}^\tau$ is universally bounded in this case 
and therefore the problem is strongly polynomially tractable.

To prove the last point we argue in the same manner.
Now $b_d \in \0(\ln d)$ yields that the first factor in the splitting \link{splitting}
is polynomially bounded in $d$.
For the second factor we again apply \autoref{lemma_symNEW}, but in this case we set
$V=m_0-2$, where $m_0$ denotes the first index $m\in\N$ such that $\lambda_{m}<1$.
Keep in mind that this index is at least two because of $\lambda_1=1$. 
On the other hand it needs to be finite,
since $\lambda \in \l_{\tau_0}$.
Therefore, the second factor in the splitting \link{splitting} is also polynomially bounded in $d$
due to the same arguments as above.
All in all, this proves \link{sum_pol} and the problem is polynomially tractable in this case.
\end{proof}

We summarize the results obtained for $I$-symmetric tensor product problems
in the following theorem.

\begin{theorem}[Tractability of symmetric problems, absolute error]
			Assume $S_1 \colon H_1 \nach G_1$ to be a compact linear operator 
			between two Hilbert spaces and let $\lambda=(\lambda_m)_{ m\in \N}$ denote the sequence 
			of non-negative eigenvalues of $W_1=S_1^\dagger S_1$ \wrt a non-increasing ordering. 
			Moreover, for $d>1$ let $\leer \neq I_d \subset\{1,\ldots,d\}$.
			Assume $S_d$ to be the linear tensor product problem restricted 
			to the $I_d$-symmetric subspace $\SI_{I_d}(H_d)$ 
			of the $d$-fold tensor product space $H_d$, 
			consider the worst case setting \wrt the absolute error criterion
			and let $\lambda_1\leq 1$.\\
			Then the problem is strongly polynomially tractable 
			if and only if $\lambda \in \l_\tau$ for some $\tau>0$ and
			\begin{itemize}
						\item $\lambda_1<1$, or
						\item $1=\lambda_1>\lambda_2$ and $(d-\#I_d) \in \0(1)$.
			\end{itemize}
			Moreover, the problem is polynomially tractable if and only if $\lambda \in \l_\tau$ for some $\tau>0$ and
			\begin{itemize}
						\item $\lambda_1<1$, or
						\item $\lambda_1=1$ and $(d-\#I_d) \in \0(\ln d)$.
			\end{itemize}
\end{theorem}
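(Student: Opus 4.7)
The proof is essentially an assembly of the three auxiliary results proved in this section, so my plan is organizational rather than computational: split the hypothesis $\lambda_1\leq 1$ into the disjoint cases $\lambda_1<1$ and $\lambda_1=1$, then in each case read off necessity from the general and symmetric necessary-conditions propositions and sufficiency from Proposition \ref{prop_suf_sym}.

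In the sub-case $\lambda_1<1$, the first bullet of Proposition \ref{prop_suf_sym} already delivers strong polynomial tractability (hence polynomial tractability) from $\lambda\in\l_\tau$ alone, with no growth constraint on $b_d=d-\#I_d$. Conversely, the first statement of Lemma \ref{prop_general} forces $\lambda\in\l_\tau$ for some $\tau>p/2$ whenever the problem is polynomially tractable. This matches the theorem, since the second bullet in each characterization (the one controlling $b_d$) is vacuous when $\lambda_1<1$.

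In the sub-case $\lambda_1=1$, both sides use the previously proved propositions simultaneously. For necessity, Lemma \ref{prop_general} again yields $\lambda\in\l_\tau$, and since $\lambda_1=1\geq 1$ the Proposition on necessary conditions in the symmetric case applies, giving $b_d\in\mathcal{O}(\ln d)$ from polynomial tractability, and $b_d\in\mathcal{O}(1)$ together with $\lambda_2<1/\lambda_1=1$ from strong polynomial tractability. For sufficiency, the second bullet of Proposition \ref{prop_suf_sym} covers the strong polynomial case $\lambda_1=1>\lambda_2$, $b_d\in\mathcal{O}(1)$, and the third bullet covers the polynomial case $\lambda_1=1$, $b_d\in\mathcal{O}(\ln d)$.

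There is no real obstacle; the only thing to verify is that the boundary value $\lambda_1=1$ is treated consistently, which it is, since both propositions explicitly allow this case (the necessary one under $\lambda_1\geq 1$, the sufficient one under $\lambda_1=1$). One should also note that the two characterizations collapse correctly when $\lambda_1<1$: since no growth control on $b_d$ is required by sufficiency and the necessary-conditions proposition yields its bounds on $b_d$ only under $\lambda_1\geq 1$, the statement of the theorem is logically tight. Assembling these observations completes the proof.
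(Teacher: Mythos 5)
Your assembly is correct and is exactly what the paper intends: the theorem is stated as a summary immediately after the necessary and sufficient conditions for the symmetric case, and no separate proof is given because it follows precisely by the case split $\lambda_1<1$ versus $\lambda_1=1$ that you carry out. The one small point worth making explicit, which you do, is that for $\lambda_1=1$ the necessary condition $\lambda_2<1/\lambda_1$ becomes $\lambda_2<1=\lambda_1$, matching the second bullet of the sufficient-conditions proposition exactly, so no gap arises at the boundary.
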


\subsection{Tractability of symmetric problems (normalized error)}

Here we briefly focus on the normalized error criterion 
for the $I$-symmetric setting.
Since $(\epsilon_d^{\rm init})^2=\lambda_{d,\psi(1)}=\lambda_1^d$ 
for any kind of symmetric problem, 
this means that we have to investigate the influence of
$d$ and $1/\epsilon'$ on
\begin{align*}
			n(\epsilon' \cdot \epsilon_d^{\rm init}, d; \SI_{I_d}(H_d)) 
			&= \min{n \in \N \sep \lambda_{d,\psi(n+1)} \leq (\epsilon')^2 \lambda_{d,\psi(1)}} \\
			&= \# \left\{ k\in\nabla_d \sep \prod_{l=1}^d \left( \frac{\lambda_{k_l}}{\lambda_1} \right) > (\epsilon')^2 \right\}
			\quad \text{for } \epsilon' \in (0,1), d\in\N.
\end{align*}
Hence, in fact we have to study the information complexity 
of a scaled tensor product problem $S_d'\colon \SI_{I_d}(H_d)\nach G_d$
with respect to the absolute error criterion.
The squared singular values of $S_1'$ equal $\mu = (\mu_m)_{m\in\N}$ with
$\mu_m = \lambda_m / \lambda_1$.
Obviously, we always have $\mu_1=1$. 
Furthermore, $\mu \in \l_\tau$ if and only if $\lambda \in \l_\tau$.
This leads to the following theorem.

\begin{theorem}[Tractability of symmetric problems, normalized error]
			Assume $S_1 \colon H_1 \nach G_1$ to be a compact linear operator 
			between two Hilbert spaces and let $\lambda=(\lambda_m)_{ m\in \N}$ denote the sequence 
			of non-negative eigenvalues of $W_1=S_1^\dagger S_1$ \wrt a non-increasing ordering. 
			Moreover, for $d>1$ let $\leer \neq I_d \subset\{1,\ldots,d\}$.
			Assume $S_d$ to be the linear tensor product problem restricted 
			to the $I_d$-symmetric subspace $\SI_{I_d}(H_d)$ 
			of the $d$-fold tensor product space $H_d$
			and consider the worst case setting \wrt the normalized error criterion.\\
			Then the problem is strongly polynomially tractable if and only if $\lambda \in \l_\tau$ for some $\tau>0$ and $\lambda_1>\lambda_2$ and $(d-\#I_d) \in \0(1)$.\\
			Moreover, $\{S_d\}$ is polynomially tractable if and only if $\lambda \in \l_\tau$ for some $\tau>0$ and $(d-\#I_d) \in \0(\ln d)$.
\end{theorem}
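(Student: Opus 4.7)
The plan is to obtain this theorem as a direct corollary of the preceding one (tractability of symmetric problems under the absolute error criterion), via the rescaling already sketched in the paragraph above the statement. Concretely, for fixed $d$ the $I_d$-symmetric normalized problem is
\begin{gather*}
n(\epsilon'\cdot \epsilon_d^{\rm init}, d; \SI_{I_d}(H_d)) = \#\left\{ k\in\nabla_d \sep \prod_{l=1}^d \frac{\lambda_{k_l}}{\lambda_1} > (\epsilon')^2 \right\},
\end{gather*}
which coincides with the information complexity (absolute error) of a new tensor product problem $S_d'$ whose univariate squared singular values are $\mu=(\mu_m)_{m\in\N}$ with $\mu_m = \lambda_m/\lambda_1$. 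The standing assumption $\lambda_1>0$ makes this legitimate, and the symmetry structure (the set $I_d$, the space $\SI_{I_d}(H_d)$, and $b_d = d - \#I_d$) is unchanged.

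First I would record three immediate properties of $\mu$: (i) $\mu_1 = 1$; (ii) $\mu \in \ell_\tau$ if and only if $\lambda \in \ell_\tau$, for any $\tau>0$, because $\norm{\mu \sep \ell_\tau}^\tau = \lambda_1^{-\tau}\norm{\lambda \sep \ell_\tau}^\tau$; and (iii) $\mu_2 < 1$ if and only if $\lambda_1 > \lambda_2$. Because $\mu_1 \leq 1$, the previous theorem (absolute error, symmetric case) is applicable to the family $\{S_d'\}$ without any further hypothesis.

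Applying that theorem to $\mu$ then gives the two equivalences one-to-one. For strong polynomial tractability, the absolute-error theorem requires $\mu\in\ell_\tau$, together with either $\mu_1<1$ or ($\mu_1=1>\mu_2$ and $b_d\in\0(1)$); since $\mu_1=1$ the first alternative is vacuous, so by (ii) and (iii) this is equivalent to $\lambda\in\ell_\tau$, $\lambda_1>\lambda_2$ and $b_d\in\0(1)$, as claimed. For polynomial tractability we need $\mu\in\ell_\tau$ and either $\mu_1<1$ or ($\mu_1=1$ and $b_d\in\0(\ln d)$); again only the second alternative is available, which translates to $\lambda\in\ell_\tau$ for some $\tau>0$ and $b_d\in\0(\ln d)$.

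There is no substantive obstacle here; the only thing to be careful about is that the previous theorem was stated under the hypothesis $\lambda_1\leq 1$, which is precisely what the rescaling guarantees for $\mu$, so no additional assumption on $\lambda_1$ is needed in the normalized setting. The trivial cases $\lambda_2 = 0$ or $\epsilon_d^{\rm init} = 0$ are excluded by the standing assumptions in Section~\ref{sect_prelim}, so the reduction goes through without adjustment.
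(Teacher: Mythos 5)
Your proposal is correct and coincides with the paper's own (informal) argument: the paper proves this result precisely by the rescaling $\mu_m = \lambda_m/\lambda_1$ sketched in the paragraph preceding the theorem, noting $\mu_1 = 1$ and $\mu\in\ell_\tau \Leftrightarrow \lambda\in\ell_\tau$, and then invoking the absolute-error theorem for symmetric problems. You have simply written out the case analysis explicitly and verified the hypothesis $\mu_1\leq 1$, which is exactly what the paper leaves implicit.
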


\subsection{Tractability of antisymmetric problems (absolute error)}\label{sect_asy_abs}

We start this subsection with simple sufficient conditions
for strong polynomial tractability.

\begin{prop}[Sufficient conditions, antisymmetric case]\label{prop_suf_asy}
				Let $\{S_d\}$ be the problem considered in \autoref{prop_general}, 
				assume $P=\AI$ and let $\lambda=(\lambda_m)_{m\in\N}\in \l_{\tau_0}$ for some
				$\tau_0\in(0,\infty)$.
				\begin{itemize}
								\item If $\lambda_1<1$ then $\{S_d\}$ is strongly polynomially tractable,
											independent of the number of antisymmetry conditions.
								\item If $\lambda_1 \geq 1$	and if there exist constants 
											$\tau \geq \tau_0$ and $d_0\in\N$ such that 
											for the number of antisymmetric coordinates	$a_d$ in dimension $d$ 
											it holds that
											\begin{gather}\label{suf_condition}
														\frac{\ln{(a_d!)}}{d} \geq \ln(\norm{\lambda \sep \l_\tau}^\tau) \quad \text{for all} \quad d\geq d_0
											\end{gather}
											then the problem $\{S_d\}$ is also strongly polynomially tractable.
				\end{itemize}
\end{prop}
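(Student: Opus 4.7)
The plan is to invoke the second part of \autoref{prop_NW} in the strongest form, \ie with $p=q=r=0$ and $f(d)\equiv\lceil C\rceil$. This reduces strong polynomial tractability to exhibiting a single exponent $\tau\geq\tau_0$ for which
\begin{gather*}
                \sup_{d\in\N}\,\sum_{k\in\nabla_d}\lambda_{d,k}^\tau<\infty,
\end{gather*}
because the tail sum over $v\geq f(d)$ is always bounded by the full series.

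First I would exploit the product structure of $\lambda_{d,k}$ together with the antisymmetry, by splitting $k=(h,j)\in\nabla_d$ into the $a_d$ antisymmetric coordinates $h$ (which are \emph{strictly} increasing) and the $b_d=d-a_d$ free coordinates $j\in\N^{b_d}$. Since each strictly ordered tuple $h$ corresponds to exactly $a_d!$ tuples under arbitrary permutation, I obtain
\begin{gather*}
                \sum_{\substack{h\in\N^{a_d}\\h_1<\ldots<h_{a_d}}} \prod_{l=1}^{a_d}\lambda_{h_l}^\tau
                \;\leq\; \frac{1}{a_d!}\left(\sum_{m=1}^\infty \lambda_m^\tau\right)^{a_d},
\end{gather*}
and combining with the unrestricted product over the $b_d$ free coordinates yields the key estimate
\begin{gather*}
                \sum_{k\in\nabla_d}\lambda_{d,k}^\tau
                \;\leq\; \frac{\norm{\lambda \sep \l_\tau}^{\tau d}}{a_d!}.
\end{gather*}
The factorial gain $1/a_d!$ is the essential feature that sets antisymmetric tensor product problems apart from their symmetric counterparts.

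From here the two claims fall out separately. For the first point, $\lambda_1<1$, I would choose $\tau\geq\tau_0$ so large that $\norm{\lambda\sep\l_\tau}^\tau\leq 1$. This is possible because, for every $\tau\geq\tau_0$,
\begin{gather*}
                \sum_{m=1}^\infty \lambda_m^\tau \;\leq\; \lambda_1^{\tau-\tau_0}\,\norm{\lambda\sep\l_{\tau_0}}^{\tau_0} \;\longrightarrow\; 0 \qquad (\tau\to\infty),
\end{gather*}
since $\lambda_1<1$. The bound then gives $\sum_{k\in\nabla_d}\lambda_{d,k}^\tau\leq 1/a_d!\leq 1$ uniformly in $d$, regardless of how many antisymmetric coordinates are imposed. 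For the second point, assumption \link{suf_condition} is equivalent to $a_d!\geq \norm{\lambda\sep\l_\tau}^{\tau d}$ for every $d\geq d_0$, so the same bound yields $\sum_{k\in\nabla_d}\lambda_{d,k}^\tau\leq 1$ for all $d\geq d_0$; the remaining finitely many $d<d_0$ are automatically finite because $\lambda\in\l_\tau$. In either case the supremum over $d\in\N$ is finite, and \autoref{prop_NW} delivers strong polynomial tractability.

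I do not foresee a serious obstacle. The only point requiring a little care is justifying $\norm{\lambda\sep\l_\tau}^\tau\to 0$ as $\tau\to\infty$ in the first case, which is straightforward from $\lambda_1<1$ and $\lambda\in\l_{\tau_0}$; the factorial bound from antisymmetry does all the real work, in sharp contrast to the symmetric analysis of \autoref{prop_suf_sym} where the absence of such a gain forced the much more delicate argument based on \autoref{lemma_symNEW}.
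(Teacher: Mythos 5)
Your proof is correct and follows essentially the same route as the paper: split $\nabla_d$ into the $a_d$ strictly ordered antisymmetric coordinates and the $b_d$ free ones, exploit the factorial gain $1/a_d!$ from antisymmetry to obtain $\sum_{k\in\nabla_d}\lambda_{d,k}^\tau\leq \norm{\lambda\sep\l_\tau}^{\tau d}/a_d!$, and feed this into the second part of \autoref{prop_NW}. The one place you diverge is the $\lambda_1<1$ case, where you use the direct tail estimate $\sum_m\lambda_m^\tau\leq\lambda_1^{\tau-\tau_0}\norm{\lambda\sep\l_{\tau_0}}^{\tau_0}\to0$ instead of the paper's appeal to the integral-comparison argument from Step~1 of \autoref{prop_suf_sym}; this is a legitimate and in fact cleaner way to make $\norm{\lambda\sep\l_\tau}^\tau$ small.
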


\begin{proof}
Like for the symmetric setting, the proof of these 
sufficient conditions is based on
the second point of \autoref{prop_NW}.
We show that under the given assumptions
\begin{gather*}
				\left( \sum_{v=1}^\infty \lambda_{d,\psi(v)}^\tau \right)^{1/\tau} \leq C < \infty
				\quad \text{for every} \quad d\in\N
\end{gather*}
and some $\tau \geq \tau_0$.
Once again $\psi$ and $\nabla_d$ are given as in \link{reordered_eigenvalues} and \link{def_nabla}, respectively.

Since for $d=1$ there is no antisymmetry condition we have $\psi = \id$ and
\begin{gather*}
				\left( \sum_{v=1}^\infty \lambda_{1,\psi(v)}^\tau \right)^{1/\tau} = \left( \sum_{v=1}^\infty \lambda_v^\tau \right)^{1/\tau} = \norm{ \lambda \sep \l_\tau} \leq \norm{ \lambda \sep \l_{\tau_0}}.
\end{gather*}
Therefore, due to the hypothesis $\lambda \in \l_{\tau_{0}}$ the term for $d=1$ is finite.

Hence, let $d\geq 2$ be arbitrarily fixed.
For $s\in\N$ with $s\geq d$ we define the cubes
$Q_{d,s}$ of multi-indices similar to \link{cube_Qds}.
With this notation we obtain the representation
\begin{gather*}
				\sum_{v=1}^\infty \lambda_{d,\psi(v)}^\tau = \sum_{k \in \nabla_d} \lambda_{d,k}^\tau = \lim_{s \nach \infty} \sum_{k \in \nabla_d \cap Q_{d,s}} \lambda_{d,k}^\tau.
\end{gather*}
Without loss of generality we may reorder the set of coordinates such that 
$I_d = \{i_1,\ldots,i_{a_d}\}=\{1,\ldots,a_d\}$.
That is, we assume partial antisymmetry with respect to the first $a_d$ coordinates. 
Furthermore, we define $U_{a_d,s}= \{ j \in Q_{a_d,s} \sep j_1<j_2<\ldots<j_{a_d} \}$ and set $b_d=d-a_d$.

If $b_d > 0$ then the set of multi-indices under consideration splits into two non-trivial parts:
\begin{gather*}
				\nabla_d \cap Q_{d,s} = U_{a_d,s} \times Q_{b_d,s} \quad \text{for all} \quad s \geq d.
\end{gather*}
Because of the product structure of $\lambda_{d,k}$ ($k\in\nabla_d$) this implies
\begin{gather*}
				\sum_{k=(j,i) \in \nabla_d \cap Q_{d,s}} \lambda_{d,k}^\tau 
				= \left(\sum_{j \in U_{{a_d},s}} \prod_{l=1}^{a_d} \lambda_{j_l}^\tau \right) \left( \sum_{i \in Q_{b_d,s}} \prod_{l=1}^{b_d} \lambda_{i_l}^\tau\right).
\end{gather*}
Since the sequence $\lambda=(\lambda_m)_{m\in\N}$ is an element of $\l_\tau$ 
we can easily estimate the second factor for every $s\geq d$ from above by
\begin{gather}\label{est}
				\sum_{i \in Q_{b_d,s}} \prod_{l=1}^{b_d} \lambda_{i_l}^\tau 
				= \prod_{l=1}^{b_d} \sum_{m=1}^s \lambda_{m}^\tau 
				= \left( \sum_{m=1}^s \lambda_{m}^\tau \right)^{b_d} 
				\leq \left( \sum_{m=1}^\infty \lambda_{m}^\tau \right)^{1/\tau \cdot b_d \cdot \tau } 
				= \norm{\lambda \sep \l_\tau}^{b_d \cdot \tau}.
\end{gather}
To handle the first term we need an additional argument.
Note that the structure of $U_{{a_d},s}$ implies
\begin{gather*}
				\sum_{j \in Q_{{a_d},s}} \prod_{l=1}^{a_d} \lambda_{j_l}^\tau 
				= \sum_{\substack{j \in Q_{{a_d},s}\\\exists k,m: j_k=j_m}} \prod_{l=1}^{a_d} \lambda_{j_l}^\tau + a_d! \sum_{j \in U_{{a_d},s}} \prod_{l=1}^{a_d} \lambda_{j_l}^\tau,
\end{gather*}
which leads to the upper bound
\begin{gather*}
				\sum_{j \in U_{{a_d},s}} \prod_{l=1}^{a_d} \lambda_{j_l}^\tau 
				\leq \frac{1}{a_d!} \sum_{j \in Q_{{a_d},s}} \prod_{l=1}^{a_d} \lambda_{j_l}^\tau
				\leq \frac{1}{a_d!} \norm{\lambda \sep \l_\tau}^{a_d \cdot \tau}, 
\end{gather*}
where we used the same arguments as in \link{est}.
Once again this upper bound does not depend on $s\geq d$.
Hence, due to $d=a_d+b_d$, we conclude
\begin{gather*}
				\sum_{v=1}^\infty \lambda_{d,\psi(v)}^\tau 
				= \lim_{s \nach \infty} \sum_{k \in \nabla_d \cap Q_{d,s}} \lambda_{d,k}^\tau
				\leq \frac{1}{a_d!} \norm{\lambda \sep \l_\tau}^{\tau d}
\end{gather*}
for every choice of $\AI_{I_d}$.
Of course, for every $2\leq d < d_0$ this upper bound is trivially 
less than an absolute constant.
Thus, we can assume $d\geq d_0$. 
Then, due to the hypothesis of the second point we have
$\ln(a_d!) \geq \ln{(\norm{\lambda \sep \l_\tau}^{\tau d})}$,
which implies
\begin{gather*}
				\left( \sum_{v=1}^\infty \lambda_{d,\psi(v)}^\tau \right)^{1/\tau}
				\leq \left( \frac{1}{a_d!} \norm{\lambda \sep \l_\tau}^{\tau d} \right)^{1/\tau}
				\leq 1 \quad \text{for} \quad d\geq d_0.
\end{gather*}
Hence, \link{suf_condition} is sufficient for strong polynomial tractability,
independent of $\lambda_1$.
Therefore it suffices to show that $\lambda_1<1$ implies \link{suf_condition}
in order to complete the proof.
To this end, let $\lambda_1<1$. 
We know from Step 1 in the proof of \autoref{prop_suf_sym} that there 
exists some $\tau \geq \tau_0$ such that
\begin{gather*}
				\norm{\lambda \sep \l_\tau}^\tau = \sum_{m=1}^\infty \lambda_m^\tau \leq \frac{1}{2} < 1.
\end{gather*}
Thus, we see that the right hand side of \link{suf_condition} is negative,
whereas the left hand side is non-negative for every choice of $a_d$.
\end{proof}

We also briefly comment on this result.
First, note that a sequence $\lambda = (\lambda_m)_{m\in\N}$ 
that is not included in any $\l_\tau$-space, $0<\tau<\infty$, 
has to converge to zero more slowly than the inverse of any polynomial, 
i.e., $m^{-\alpha}$ for $\alpha > 0$ arbitrarily fixed. 
Thus, only sequences like $\lambda_m = 1/\ln(m)$ lead to 
polynomial intractability in the fully antisymmetric setting.

Secondly, observe that \link{suf_condition} is quite a weak assumption. 
For example if we have
\begin{gather*}
				a_d \geq \ceil{ \frac{d}{\ln d^\alpha} } \quad \text{with} \quad 0 < \alpha < \frac{1}{\ln (\norm{\lambda \sep \l_\tau}^\tau)}
\end{gather*}
for all sufficiently large $d$ then 
\begin{gather*}
				\frac{\ln(a_d!)}{d}
				\geq \frac{a_d (\ln(a_d)-1)}{d} 
				\geq \frac{1}{\alpha} \cdot \frac{\ln \left( \frac{1}{e\alpha} \cdot \frac{d}{\ln d} \right)}{\ln d} 
				\, \longrightarrow \, \frac{1}{\alpha} > \ln(\norm{\lambda \sep \l_\tau}^\tau), \quad d\nach \infty.
\end{gather*}
If $\alpha$ equals its upper bound, \ie 
$\alpha = 1/\ln(\norm{\lambda \sep \l_\tau}^\tau)$, then
the condition \link{suf_condition} does not hold.
This also shows that assumptions like $a_d = \ceil{ d^\beta }$ with $\beta < 1$
are not sufficient to conclude \link{suf_condition}. 

Note that \autoref{prop_NW} allows us to omit the largest $f(d)-1$ eigenvalues
$\lambda_{d,\psi(v)}$ where $f(d)$ may grow polynomially in $(\epsilon_d^{\rm init})^{-1}$
with $d$. We did not use this fact in the proof of the sufficient conditions.

Let us now turn to the necessary conditions.
We will see that we need a condition similar to~\link{suf_condition}
in order to conclude polynomial tractability if we deal with slowly decreasing 
eigenvalues~$\lambda$.

\begin{prop}[Necessary conditions, antisymmetric case]\label{prop_nec_asy}
			Let $\{S_d\}$ be the problem considered
			in \autoref{prop_general} and assume $P = \AI$.
			Furthermore, let $\{S_d\}$ be polynomially tractable
			with the constants $C,p>0$ and $q\geq 0$.\\
			Then, for $d$ tending to infinity, the initial error 
			$\epsilon_d^{\rm init}$ tends to zero faster than the inverse 
			of any polynomial.
			Furthermore, $\lambda= (\lambda_m)_{m\in\N} \in \l_\tau$ for every $\tau > p/2$ 
			and for all $\delta > 0$ there exists some $d_0 \in \N$ such that
			\begin{gather}\label{bound_a}
						\ln\left(\norm{\lambda \sep \l_\tau}^\tau\right) - \delta 
						\leq \frac{1}{d} \sum_{k=1}^{a_d} \ln \left( \frac{\norm{\lambda \sep \l_\tau}^\tau}{\lambda_k^\tau} \right)
						\quad \text{for all} \quad d\geq d_0.
			\end{gather}
			Thus, we have $\lambda_1<1$ or $\lim_{d\nach \infty} a_d = \infty$.
\end{prop}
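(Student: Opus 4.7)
The plan is to apply \autoref{prop_NW} in the $I_d$-antisymmetric setting and to exploit the product structure $\lambda_{d,k}=\prod_{l=1}^d\lambda_{k_l}$ of the eigenvalues on $\nabla_d$. Without loss of generality assume $I_d=\{1,\ldots,a_d\}$. Then the multi-indices $k=(1,2,\ldots,a_d,m_1,\ldots,m_{b_d})$ with arbitrary $m_j\in\N$ all lie in $\nabla_d$, and summing $\lambda_{d,k}^\tau$ over them factorises to give
\begin{gather*}
\sum_{k\in\nabla_d}\lambda_{d,k}^\tau \geq \prod_{k=1}^{a_d}\lambda_k^\tau \cdot \norm{\lambda\sep\l_\tau}^{\tau b_d}.
\end{gather*}
On the other hand \autoref{prop_NW} (applicable since $\{S_d\}$ is polynomially tractable) yields $\lambda\in\l_\tau$ for every $\tau>p/2$ together with the upper bound $\sum_{k\in\nabla_d}\lambda_{d,k}^\tau\leq (1+C)d^q\lambda_{d,\psi(1)}^\tau + C^{2\tau/p}\zeta(2\tau/p)d^{2q\tau/p}$. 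Combined with the trivial consequence $\lambda_{d,\psi(1)}\leq (Cd^q)^{2/p}$ of $n(\epsilon,d)\geq 1$ for $\epsilon^2<\lambda_{d,\psi(1)}$, both summands become polynomial in $d$, so chaining the estimates produces
\begin{gather*}
\prod_{k=1}^{a_d}\lambda_k^\tau \cdot \norm{\lambda\sep\l_\tau}^{\tau b_d} \leq K\,d^R
\end{gather*}
for suitable constants $K,R>0$ depending on $C,p,q,\tau$.

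Taking logarithms, dividing by $d$, substituting $b_d=d-a_d$ and rearranging gives $\ln\norm{\lambda\sep\l_\tau}^\tau - (\ln K + R\ln d)/d \leq \frac{1}{d}\sum_{k=1}^{a_d}\ln(\norm{\lambda\sep\l_\tau}^\tau/\lambda_k^\tau)$, which is precisely \link{bound_a} once the vanishing correction is absorbed into $\delta$ for $d\geq d_0(\delta)$. The final dichotomy then follows by contradiction: if $a_d\not\to\infty$, passing to a subsequence on which $a_{d_n}\leq A$ is bounded makes the right-hand side of \link{bound_a} tend to zero as $d_n\to\infty$, so sending $\delta\to 0^+$ forces $\norm{\lambda\sep\l_\tau}^\tau\leq 1$ and hence $\lambda_1<1$ (since $\lambda_2>0$).

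For the super-polynomial decay of $\epsilon_d^{\rm init}=\sqrt{\lambda_{d,\psi(1)}}$, the case $\lambda_1<1$ is immediate since then $\lambda_{d,\psi(1)}\leq\lambda_1^d$. In the remaining case $\lambda_1\geq 1$ (which forces $a_d\to\infty$ by the dichotomy above), two complementary bounds cover all growth regimes of $b_d$: on the one hand \link{bound_a} yields $\ln\lambda_{d,\psi(1)}^\tau\leq -b_d\ln(\norm{\lambda\sep\l_\tau}^\tau/\lambda_1^\tau)+d\delta$, which produces exponential decay whenever $b_d\geq d/2$; on the other hand the entry-wise bound $\lambda_k^\tau\leq\norm{\lambda\sep\l_\tau}^\tau/k$ (valid by the non-increasing ordering) implies $\prod_{k=1}^{a_d}\lambda_k^\tau\leq\norm{\lambda\sep\l_\tau}^{\tau a_d}/a_d!$, whose super-exponential decay in $a_d$ (by Stirling's formula) dominates $\lambda_1^{\tau b_d}$ whenever $a_d\geq d/2$. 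Since at every $d$ at least one of the conditions $a_d\geq d/2$ and $b_d\geq d/2$ holds, the desired super-polynomial decay follows; balancing these two regimes is the main technical subtlety of the argument.
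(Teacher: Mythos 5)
The central difficulty in the student's argument is the step labelled as a "trivial consequence," namely the claim that $\lambda_{d,\psi(1)}\leq(Cd^q)^{2/p}$ follows from polynomial tractability simply because $n(\epsilon,d)\geq1$ for $\epsilon^2<\lambda_{d,\psi(1)}$. This is false: the polynomial bound $n(\epsilon,d)\leq C\epsilon^{-p}d^q$ is only required to hold for $\epsilon\in(0,1]$, so when $\lambda_{d,\psi(1)}>1$ one cannot take $\epsilon$ up to $\sqrt{\lambda_{d,\psi(1)}}$, and the argument collapses. The paper itself exhibits a counterexample immediately after \autoref{prop_NW}, where $\lambda_{d,1}=e^{2d}$ grows exponentially yet the problem is strongly polynomially tractable. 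So polynomial tractability with the absolute error criterion carries \emph{no} direct information about $\lambda_{d,1}$.

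In the antisymmetric setting, polynomial boundedness (in fact super-polynomial decay) of $\lambda_{d,\psi(1)}$ is true, but it is precisely the hard part of the proposition, not a hypothesis one may invoke. The paper therefore runs the argument in the opposite order from yours: it first establishes the super-polynomial decay of the initial error by a contradiction argument (Step 2), which combines inequality \link{asy_est} with the factorial decay $\prod_{k=1}^{a_d}\lambda_k^\tau\leq C_\tau^{\tau a_d}/a_d!$ obtained from $\lambda\in\l_\tau$, and only \emph{then} uses the resulting bound $\lambda_{d,\psi(1)}\leq 1$ (for large $d$) to derive \link{bound_a} in Step 3. By starting from \link{bound_a}, your proof makes the rest of the chain dependent on a fact whose only available justification in this setting is essentially the conclusion you are trying to reach; the "trivial consequence" is a shortcut that is either false (in general) or circular (here). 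Everything downstream of that point — the algebraic rearrangement to \link{bound_a}, the dichotomy argument, and the two-regime ($a_d\geq d/2$ versus $b_d\geq d/2$) treatment of the initial error, which is a genuinely nice alternative to the paper's Step 2 — is correct in itself, but it cannot be launched. To repair the proof you would need to insert, before invoking any bound on $\lambda_{d,\psi(1)}$, an argument of the paper's Step 2 type: suppose $\lambda_{d_k,\psi(1)}\P(d_k)$ is bounded below on a subsequence, derive $b_{d_k}=O(\ln d_k)$ from \link{asy_est}, deduce $a_{d_k}\to\infty$, and finally contradict the assumed lower bound by the factorial decay of the eigenvalue product.
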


\begin{proof}
\textit{Step 1}. 
For the whole proof assume $\tau > p/2$ to be fixed. 
Then \autoref{prop_general} shows that $\lambda \in \l_\tau$.
Moreover, we again use the notation $d=a_d + b_d$, 
where $a_d = \#I_d$ denotes the number of coordinates 
with antisymmetry conditions in dimension $d$.
Similar to the symmetric case we can split the sum of the eigenvalues
such that for all $d\in\N$
\begin{gather*}
			\sum_{k\in\nabla_d} \lambda_{d,k}^\tau = \left( \sum_{m=1}^\infty \lambda_m^\tau \right)^{b_d} \sum_{\substack{j\in\N^{a_d},\\1 \leq j_1<\ldots<j_{a_d}}} \lambda_{a_d,j}^\tau \geq \norm{\lambda \sep \l_\tau}^{\tau b_d} \cdot \lambda_1^\tau \cdot \ldots \cdot \lambda_{a_d}^\tau.
\end{gather*}
Hence, \autoref{prop_general} implies that
\begin{gather}\label{asy_est}
			\left(\frac{\norm{\lambda \sep \l_\tau}^\tau}{\lambda_1^\tau} \right)^{b_d}
			\leq (1 + C)\,d^q
						+ C^{2\tau/p} \zeta\left(\frac{2\tau}{p}\right) \left( \frac{d^{2q/p}}{\lambda_{d,\psi(1)}} \right)^\tau.
\end{gather}
In what follows we will use this inequality to conclude all the stated assertions.

\textit{Step 2}. Here we prove the limit property for 
the initial error $\epsilon_d^{\mathrm{init}}=\sqrt{\lambda_{d,\psi(1)}}$, \ie we need to show that for every fixed polynomial $\P > 0$
\begin{gather}\label{zero_limit}
			\lambda_{d,\psi(1)} \P(d) \longrightarrow 0 \quad \text{if} \quad d \nach \infty.
\end{gather}
Since $\lambda_{d,\psi(1)} = \lambda_1^{b_d} \cdot \lambda_1 \cdot \ldots \lambda_{a_d} \leq \lambda_1^{b_d} \cdot \lambda_1^{a_d}=\lambda_1^d$ 
due to the ordering of $\lambda = ( \lambda_m )_{m\in\N}$ 
we can restrict ourselves to the non-trivial case $\lambda_1 \geq 1$ in the following.
Now assume that there exists a subsequence $(d_k)_{k\in\N}$ of natural numbers as well as some constant $C_0>0$ such that $\lambda_{d_k,\psi(1)} \P(d_k)$ is bounded from below by $C_0$
for every $k\in\N$.
Then for every $d=d_k$ the right hand side of~\link{asy_est} is bounded from above by some polynomial $\P_1(d_k)>0$. 
On the other hand, due to the general condition $\lambda_2 > 0$, 
the term $\norm{\lambda \sep \l_\tau}^\tau / \lambda_1^\tau$ is strictly larger than one.
Thus, it follows that there exists some $C_1>0$ such that
\begin{gather*}
			b_{d_k} \leq C_1 \ln(d_k) \quad \text{for every} \quad k\in\N.
\end{gather*}
Therefore, we have in particular $a_{d_k} = d_k - b_{d_k} \nach \infty$, for $k\nach \infty$.
Moreover, the assumed boundedness of $\lambda_{d_k,\psi(1)} \P(d_k)$ leads to
\begin{gather*}
			C_0 \P(d_k)^{-1} 
			\leq \lambda_{d_k,\psi(1)} 
			\leq \lambda_1^{C_1 \ln(d_k)} \cdot \lambda_1 \cdot \ldots \lambda_{a_{d_k}} 
			= d_k^{C_1 \ln(\lambda_1)} \cdot \lambda_1 \cdot \ldots \lambda_{a_{d_k}}
\end{gather*}
since $\lambda_1 \geq 1$.
As we showed in Step 1 of the proof of \autoref{prop_suf_sym} 
the fact $\lambda \in \l_\tau$ yields the existence 
of some $C_\tau>0$ such that $\lambda_m \leq C_\tau m^{-1/\tau}$
for every $m\in\N$. 
Indeed, this holds for $C_\tau=\norm{\lambda \sep \l_\tau}$, 
which needs to be larger than one because of 
$\lambda_1 \geq 1$.
Hence, 
$\lambda_1^\tau \cdot \ldots \cdot \lambda_{a_{d_k}}^\tau \leq C_\tau^{\tau a_{d_k}} (a_{d_k}!)^{-1}$, 
what implies
\begin{gather*}
			\left( \frac{a_{d_k}}{e}\right)^{a_{d_k}} \leq a_{d_k}! \leq (C_\tau^\tau)^{a_{d_k}} \P_2(d_k), \quad k\in\N
\end{gather*}
for some other polynomial $\P_2 > 0$.
Thus, if $k$ is sufficiently large we conclude
\begin{gather*}
			a_{d_k} \leq a_{d_k} \ln \left( \frac{a_{d_k}}{e\,C_\tau^\tau} \right) \leq \ln(\P_2(d_k)),
\end{gather*}
since $a_{d_k} \nach \infty$ implies $a_{d_k}/(e\, C_\tau^\tau) \geq e$ for $k\geq k_0$.
Therefore, the number of antisymmetric coordinates $a_d$ needs to be
logarithmically bounded from above for every $d$ out of the sequence $(d_k)_{k\geq k_0}$.
Because also $b_{d_k}$ was found to be logarithmically bounded this is a contradiction
to the fact $d_k = a_{d_k} + b_{d_k}$.
Thus, the hypothesis $\lambda_{d_k,\psi(1)} \P(d_k) \geq C_0 > 0$ can not be true
for any subsequence $(d_k)_k$.
In other words it holds \link{zero_limit}.

\textit{Step 3}.
Next we show \link{bound_a}. 
From the former step we know 
that there needs to exist some $d^*\in\N$ such that
$1/\lambda_{d,\psi(1)} \geq 1$ for all $d \geq d^*$.
Hence, \link{asy_est} together with $\tau > p/2$ implies
\begin{gather*}
			\left(\frac{\norm{\lambda \sep \l_\tau}^\tau}{\lambda_1^\tau} \right)^{b_d}
			\leq C_2 \left( \frac{d^{2q/p}}{\lambda_{d,\psi(1)}} \right)^\tau 
			= \frac{C_2 d^{2q\tau/p}}{\lambda_1^{\tau b_d} \cdot \lambda_1^\tau \cdot \ldots \cdot \lambda_{a_d}^\tau}
			\quad \text{for} \quad d\geq d^*,
\end{gather*}
where we set $C_2 = 1 + C + C^{2\tau/p} \zeta(2\tau/p)$.
Therefore, we conclude 
\begin{gather*}
			\frac{1}{C_2 d^{2q\tau/p}} \norm{\lambda\sep \l_\tau}^{\tau d} \leq \prod_{k=1}^{a_d} \frac{\norm{\lambda \sep \l_\tau}^\tau}{\lambda_k^\tau}
\end{gather*}
for all $d\geq d^*$, which is equivalent to
\begin{gather}\label{bound_a2}
			\ln \left( \norm{\lambda \sep \l_\tau}^\tau \right) - \frac{\ln (C_2 \, d^{2q\tau/p})}{d}
			\leq \frac{1}{d} \sum_{k=1}^{a_d} \ln\left( \frac{\norm{\lambda \sep \l_\tau}^\tau}{\lambda_k^\tau}\right).
\end{gather}
Obviously, for given $\delta > 0$, 
there is some $d^{**}$ such that 
$\ln(C_2 \, d^{2q\tau/p})/d<\delta$ for all $d\geq d^{**}$.
Hence, we can choose $d_0=\max{d^*,d^{**}}$ 
in order to obtain \link{bound_a}.

\textit{Step 4}.
It remains to show that $\lambda_1 \geq 1$ implies that
$\lim_{d\nach \infty} a_d$ is infinite.
To this end, note that every summand in \link{bound_a2} is strictly positive.
If we assume for a moment the existence of a subsequence $(d_k)_{k\in\N}$
such that $a_{d_k}$ is bounded for every $k\in\N$ then the right hand side of
\link{bound_a2} is less than some positive constant divided by $d_k$.
Hence, it tends to zero if $k$ approaches infinity.
On the other hand, for large $d$, the left hand side of \link{bound_a2} 
is strictly larger than some positive constant, because of $\lambda_1 \geq 1$ and $\lambda_2>0$. 
This contradiction completes the proof.
\end{proof}

As mentioned before there are examples such that the sufficient condition
\link{suf_condition} from \autoref{prop_suf_asy} is also necessary 
(up to some constant factor) in order to conclude polynomial tractability
in the antisymmetric setting.
Now we are ready to give such an example.

\begin{example}
Consider the situation of \autoref{prop_general} for $P=\AI$
and assume the problem $\{S_d\}$ to be polynomially tractable.
In addition, for a fixed $\tau\in(0,\infty)$, let 
$\lambda=(\lambda_m)_{m\in\N} \in \l_\tau$ be given such that $\lambda_1 \geq 1$
and, moreover, assume that there exist $m_0 \in \N$ such that
\begin{gather}\label{assumption}
				\lambda_m \geq \frac{\norm{\lambda \sep \l_\tau}}{m^{\alpha / \tau}}
				\quad \text{for all} \quad m>m_0 \quad \text{and some} \quad \alpha>1.
\end{gather}
Then we claim that for every $\delta > 0$ there exists $\bar{d}\in\N$ such that
\begin{gather}\label{claimed_bound}
				\left( \frac{1}{\alpha}-\delta \right) \ln \left( \norm{\lambda \sep \l_\tau}^\tau \right) \leq \frac{\ln(a_d!)}{d} \quad \text{for all} \quad d\geq \bar{d}.
\end{gather}
Recall that due to \autoref{prop_suf_asy}, 
for the amount of antisymmetry $a_d$, 
it was sufficient to assume
\begin{gather*}
				\ln \left( \norm{\lambda \sep \l_\tau}^\tau \right) \leq \frac{\ln(a_d)!}{d}
				\quad \text{for every } d \text{ larger than some fixed } d_0\in\N
\end{gather*}
in order to conclude strong polynomial tractability.

Before we prove the claim it might be useful to give a concrete example where
\link{assumption} holds true.
To this end, set $\lambda_m=1/m^2$, $\tau=1$, $\alpha=3$ and $m_0=2$.
Then it is easy to check that $\norm{\lambda \sep \l_\tau}=\zeta(2)=\pi^2/6$ and
obviously we have $\lambda_1=1$.

To see that the claimed inequality \link{claimed_bound} holds true 
we can use \autoref{prop_nec_asy} and, in particular, inequality~\link{bound_a2}.
Since $\lambda_1 \geq 1$ we know that $\lim_{d} a_d = \infty$, \ie$a_d > m_0$
for every $d$ larger than some $d_1\in\N$.
Moreover, note that \link{assumption} is equivalent to
\begin{gather*}
				\ln \left( \frac{\norm{\lambda \sep \l_\tau}^\tau}{\lambda_m^\tau} \right) 
				\leq \alpha \ln(m)
				\quad \text{for all} \quad m>m_0.
\end{gather*}
Hence, if $d\geq d_1$ we can estimate the right hand side of \link{bound_a2} from above by
\begin{gather*}
				\frac{1}{d} \sum_{k=1}^{a_d} \ln \left( \frac{\norm{\lambda \sep \l_\tau}^\tau}{\lambda_k^\tau} \right)
				\leq \frac{m_0}{d}  \cdot \ln \left( \frac{\norm{\lambda \sep \l_\tau}^\tau}{\lambda_{m_0}^\tau} \right) + \frac{\alpha}{d} \sum_{k=m_0+1}^{a_d} \ln(k) 
				\leq \frac{C_\lambda}{d} + \alpha \frac{\ln (a_d!)}{d}.
\end{gather*}
Consequently, this leads to
\begin{gather*}
				\frac{1}{\alpha} \ln \left( \norm{\lambda \sep \l_\tau}^\tau \right) - \frac{C_\lambda + \ln(C_2 d^{2q\tau/p})}{\alpha \cdot d} \leq \frac{\ln (a_d!)}{d}
				\quad \text{for} \quad d\geq \max{d^*,d_1}.
\end{gather*}
Now \link{claimed_bound} follows easily by choosing $\bar{d}\geq \max{d^*,d_1}$ 
large enough such that the negative term 
on the left is smaller than a given $\delta > 0$ 
times $\ln(\norm{\lambda \sep \l_\tau}^\tau)$.
\end{example}

Although there remains a small gap between the necessary 
and the sufficient conditions for the absolute error criterion, 
the most important cases of antisymmetric
tensor product problems are covered by our results.
We summarize the main facts in the next theorem.

\begin{theorem}[Tractability of antisymmetric problems, absolute error]\label{thm_asy_abs}
			Let $S_1 \colon H_1 \nach G_1$ be a compact linear operator 
			between two Hilbert spaces and let $\lambda=(\lambda_m)_{ m\in \N}$ denote the sequence 
			of non-negative eigenvalues of $W_1=S_1^\dagger S_1$ \wrt a non-increasing ordering. 
			Moreover, for $d>1$ let $\leer \neq I_d \subset\{1,\ldots,d\}$.
			Assume $S_d$ to be the linear tensor product problem restricted 
			to the $I_d$-antisymmetric subspace $\AI_{I_d}(H_d)$ 
			of the $d$-fold tensor product space $H_d$
			and consider the worst case setting with respect to the absolute error criterion.\\
			Then for the case $\lambda_1 < 1$ the following statements are equivalent:
			\begin{itemize}
						\item $\{S_d\}$ is strongly polynomially tractable.
						\item $\{S_d\}$ is polynomially tractable.
						\item There exists a universal constant $\tau \in (0,\infty)$ such that $\lambda \in \l_\tau$.
			\end{itemize}
			Moreover, the same equivalences hold true if $\lambda_1\geq 1$ and $\#I_d$ grows linearly with the dimension $d$.
\end{theorem}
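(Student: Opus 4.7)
The plan is to chain together the results already established in Propositions~\ref{prop_general}, \ref{prop_suf_asy}, and \ref{prop_nec_asy}, so that the work reduces to checking that the linear-growth assumption on $\#I_d$ matches the sufficient condition \link{suf_condition}.

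First, the implication from strong polynomial tractability to polynomial tractability is immediate from the definition (take $q=0$). For the reverse direction under either hypothesis (i.e. that polynomial tractability implies $\lambda\in\l_\tau$ for some $\tau>0$), I would invoke the first assertion of \autoref{prop_general}, which says that any polynomially tractable family $\{S_d\}$ satisfies $\lambda\in\l_\tau$ for every $\tau>p/2$; this works regardless of whether the problem is symmetric or antisymmetric and regardless of the value of $\lambda_1$.

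The content then lies in showing $\lambda\in\l_\tau \Rightarrow$ strong polynomial tractability under each of the two regimes. For $\lambda_1<1$, this is exactly the first bullet of \autoref{prop_suf_asy} and requires nothing further. For the case $\lambda_1\geq 1$ with $a_d := \#I_d$ growing linearly in $d$, I would verify the hypothesis \link{suf_condition} of the second bullet of \autoref{prop_suf_asy}. Concretely, pick $\tau\geq\tau_0$ such that $\lambda\in\l_\tau$, write $K := \ln(\norm{\lambda \sep \l_\tau}^\tau)<\infty$, and use linear growth to fix $c>0$ and $d_1\in\N$ with $a_d\geq cd$ for all $d\geq d_1$. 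Stirling's bound $\ln(a_d!)\geq a_d\ln(a_d)-a_d$ then gives
\begin{gather*}
				\frac{\ln(a_d!)}{d} \geq \frac{cd(\ln(cd)-1)}{d} = c\bigl(\ln(cd)-1\bigr),
\end{gather*}
which grows without bound in $d$ and hence eventually exceeds $K$. Choosing $d_0\geq d_1$ large enough that $c(\ln(cd_0)-1)\geq K$ yields \link{suf_condition}, and \autoref{prop_suf_asy} concludes strong polynomial tractability.

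The only mildly delicate point is the book-keeping in the Stirling step: one has to make sure the constant $K$ is finite (which follows from $\lambda\in\l_\tau$) and that the chosen $\tau$ can simultaneously satisfy $\tau\geq\tau_0$ and the assumption of the sufficient-conditions proposition, but both are automatic. No new ideas beyond the already-proved propositions are needed, and the equivalences of the two tractability notions here are a structural consequence of the fact that both necessary and sufficient criteria coincide in these two regimes.
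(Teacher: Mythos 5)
Your proof is correct and follows exactly the route the paper intends: the theorem is stated as a summary with no explicit proof, being a direct consequence of Lemma~\ref{prop_general} (necessity of $\lambda\in\l_\tau$), Proposition~\ref{prop_suf_asy} (sufficiency, with the two bullets covering $\lambda_1<1$ and the $\ln(a_d!)/d$ condition), and the trivial implication from strong polynomial tractability to polynomial tractability. Your Stirling computation verifying that linear growth of $a_d$ implies condition~\link{suf_condition} is precisely the missing glue, and in fact the paper's remark following Proposition~\ref{prop_suf_asy} performs essentially the same estimate for the even weaker growth $a_d\geq\ceil{d/\ln d^\alpha}$.
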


Finally, before we continue with the normalized error criterion, 
we want to deduce an exact formula for the complexity in the case of
fully antisymmetric functions.
Hence, we set $I=I_d=\{1,\ldots,d\}$ for every $d \in \N$
and consider
\begin{gather*}
				S_d \colon \AI_I(H_d) \nach G_d
\end{gather*}
in the following.
In this case the set of parameters $\nabla_d$ is given by
\begin{gather*}
				\nabla_d = \{k=(k_1,\ldots,k_d)\in \N^d \sep k_1 < k_2 < \ldots < k_d\}.
\end{gather*}
Thus, to obtain a worst case error less or equal than a given $\epsilon>0$ we need at least
\begin{gather*}
				n^{\rm asy}(\epsilon,d)=n(\epsilon,d; \AI(H_d)) = \# \left\{k\in\nabla_d \sep \lambda_{d,k} = \prod_{l=1}^d \lambda_{k_l} >\epsilon^2\right\}
\end{gather*}
linear functionals for the $d$-variate case.

Due to the ordering of $(\lambda_m)_{m\in\N}$ the largest eigenvalue 
$\lambda_{d,k}$ with $k\in\nabla_d$  is given by the square of 
$\epsilon_d^{\rm init} = \sqrt{\lambda_1 \cdot \lambda_2 \cdot \ldots \cdot \lambda_d}$.
In other words, it is
\begin{gather*}
				n^{\rm asy}(\epsilon,d) = 0 \quad \text{for all} \quad \epsilon \geq \epsilon_d^{\rm init}.
\end{gather*}
To calculate the cardinality also for 
$\epsilon < \epsilon_d^{\rm init}$ let us define
\begin{eqnarray}\label{def_i}
				i_d(\delta^2) = \min{i \in \N \sep \alpha_i=\lambda_i \cdot \lambda_{i+1} \cdot \ldots \cdot \lambda_{i+d-1} \leq \delta^2}, \quad \text{for} \quad \delta>0 \quad \text{and} \quad d\in\N.
\end{eqnarray}
Using this notation we can formulate the following assertion. 
Again the proof can be found in the appendix of this paper.

\begin{prop}[Complexity of fully antisymmetric problems]\label{theo_exact}
				Let $\{S_d\}$ be the problem considered
				in \autoref{prop_general} and assume $P = \AI$ as well as
				$I=I_d=\{1,\ldots,d\}$.\\
				Then for every $\epsilon>0$ the information complexity 
				is given by $n^{\rm asy}(\epsilon,d) = n^{\rm ent}(\epsilon,1)$, 
				if $d=1$, and
				\begin{align}
								&n^{\rm asy}(\epsilon,d) \label{exact}\\
								&\quad = \sum_{l_1=2}^{i_d(\epsilon^2)} \sum_{l_2=l_1+1}^{i_{d-1}(\epsilon^2 / \lambda_{l_1-1})} \ldots \sum_{l_{d-1}=l_{d-2}+1}^{i_{2}(\epsilon^2 / [\lambda_{l_1-1}\cdot\ldots\cdot\lambda_{l_{d-2}-1}])} 
								\left[ n^{\rm ent}\left( \epsilon/\sqrt{\lambda_{l_1-1}\cdot\ldots\cdot\lambda_{l_{d-1}-1}}, 1\right) - l_{d-1} + 1 \right] \nonumber
				\end{align}
				if $d\geq 2$. Here the quantities $i_j$, for $j=2,\ldots,d$, are defined as in \link{def_i}. 
\end{prop}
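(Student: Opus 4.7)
The approach is to count the set $\{k\in\nabla_d \sep \prod_{l=1}^d \lambda_{k_l} > \epsilon^2\}$ by fixing the first $d-1$ coordinates of $k$ and reducing the question on the last coordinate to a univariate count. The case $d=1$ is immediate: there is no antisymmetry condition, so $\nabla_1 = \N$ and hence $n^{\rm asy}(\epsilon,1) = \#\{k\in\N \sep \lambda_k > \epsilon^2\} = n^{\rm ent}(\epsilon,1)$.

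For $d\geq 2$ I would introduce the shifted indices $l_j = k_j + 1$ for $j=1,\ldots,d-1$, so that the constraint $1\leq k_1<k_2<\ldots<k_{d-1}$ becomes $2\leq l_1<l_2<\ldots<l_{d-1}$. With the first $d-1$ coordinates fixed, the admissible values $k_d$ are precisely those integers $k_d \geq l_{d-1}$ with $\lambda_{k_d} > \epsilon^2/(\lambda_{l_1-1}\cdots\lambda_{l_{d-1}-1})$; the non-increasing ordering of $\lambda$ shows that their number equals
\begin{gather*}
				\max{0, \, n^{\rm ent}(\epsilon/\sqrt{\lambda_{l_1-1}\cdots\lambda_{l_{d-1}-1}}, 1) - l_{d-1} + 1},
\end{gather*}
because $n^{\rm ent}(\cdot,1)$ counts \emph{all} univariate indices whose eigenvalue exceeds the squared threshold and we subtract the $l_{d-1}-1$ forbidden ones. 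Summing this quantity over every strictly increasing tuple $(l_1,\ldots,l_{d-1})$ with $l_1 \geq 2$ produces \link{exact} up to the truncation of the outer sums.

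To justify the truncation, I would argue as follows. For any fixed prefix $(l_1,\ldots,l_{j-1})$ and $k_j=l_j-1$, the non-increasing ordering of $\lambda$ implies that the packed tuple $k_{j+s} = k_j + s$ ($s=0,\ldots,d-j$) maximizes $\prod_{s=j}^d \lambda_{k_s}$ among all strictly increasing completions. Hence a valid extension of this prefix to some element of $\nabla_d$ with $\prod_{l=1}^d \lambda_{k_l} > \epsilon^2$ exists if and only if the packed product $\lambda_{l_j-1}\lambda_{l_j}\cdots\lambda_{l_j+d-j-1}$ exceeds $\delta_j^2 := \epsilon^2/(\lambda_{l_1-1}\cdots\lambda_{l_{j-1}-1})$, which by the very definition of $i_{d-j+1}$ is equivalent to $l_j \leq i_{d-j+1}(\delta_j^2)$. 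Truncating the $j$-th outer sum at this bound therefore discards only summands whose inner sums are identically zero; conversely, for every $l_j$ within the truncated range the same packed completion witnesses at least one admissible $k_d$, which shows that the innermost $\max$ is strictly positive and may be replaced by its non-trivial argument. Combining these observations yields \link{exact}.

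The main obstacle I anticipate is the combinatorial bookkeeping at the intermediate levels: one must keep track of the correct number of factors in every $\delta_j^2$ and in every packed product, and check that the index $d-j+1$ appearing in $i_{d-j+1}$ coincides with the length of the packed completion $(k_j, k_j+1, \ldots, k_j+d-j)$. This arithmetic is straightforward but tedious, and the definition of $i_{d-j+1}$ as the smallest index $i$ such that the product of exactly $d-j+1$ consecutive factors starting at $i$ drops below $\delta_j^2$ must be invoked in the correct shifted form (starting at $l_j-1$ rather than at $k_j$) at every step of the iteration.
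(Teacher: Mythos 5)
Your proof is correct and takes essentially the same route as the paper's: both hinge on the observation that the packed tuple of consecutive indices maximizes the tail product, which is precisely what yields the truncation bound $i_{d-j+1}(\delta_j^2)$ at each level of the nested sum and the innermost count $n^{\rm ent}(\cdot,1) - l_{d-1} + 1$. The paper arrives at the formula by iteratively decomposing $\nabla_d = \N\times\nabla_{d-1}$ and peeling off the first coordinate one level at a time, while you fix all $d-1$ leading coordinates, compute the univariate count up front, and then justify the truncations afterward; this is only a difference of presentation, not of argument.
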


\begin{rem}
If we define
\begin{gather*}
				\alpha^{(k)}_m=\prod_{l=0}^{k-1} \lambda_{m+l}, \quad m \in \N,
\end{gather*}
for $k\in\N$ and a non-increasing sequence 
$\lambda_1 \geq \lambda_2 \geq \ldots > 0$, 
then we can interpret the quantities 
$i_k(\delta^2)$ as information complexities
of modified univariate problems $S^{(k)}_1$. 
In detail, let $S^{(k)}_1 \colon H_1 \nach G_1$
define a compact linear operator such that
\begin{gather*}
				W^{(k)}_1 = \left( S^{(k)}_1 \right)^\dagger \left(S^{(k)}_1\right) \colon H_1 \nach H_1
\end{gather*}
possesses the eigenvalues $\{\alpha_m^{(k)} \sep m\in\N\}$. 
Then
\begin{gather*}
				n^{\rm ent}(\delta,1)
				= n^{\rm ent}(\delta,1;S^{(k)}_1 \colon H_1 \nach G_1) 
				= i_k(\delta^2) - 1 \quad \text{for all} \quad \delta > 0.
\end{gather*}
Further, note that for $k\geq 2$ the quantities 
$i_k(\epsilon^2/[\lambda_{l_1-1}\cdot\ldots\cdot\lambda_{l_{d-k}-1}])$ are
non-increasing functions in $l_1,\ldots,l_{d-k}$ and $\epsilon$. 
\end{rem}

Out of \autoref{theo_exact} we can conclude bounds on the information complexity. 
If $d\geq 2$ and $\epsilon < \epsilon_d^{\rm init}$ then the sum in \link{exact} contains
at least the term with the index $l_1=2, l_2=3,\ldots,l_{d-1}=d$. 
That is, for any choice of $\lambda$ we get the lower bound
\begin{gather*}
				n^{\rm asy}(\epsilon,d) 
				\geq n^{\rm ent} \left( \epsilon/\sqrt{\lambda_{1}\cdot\ldots\cdot\lambda_{d-1}}, 1 \right) - d + 1,
\end{gather*}
which can be used to show that 
we cannot expect the same nice conditions for (strong) polynomial
tractability as before if we switch from 
the absolute to the normalized error criterion.
We conclude this subsection with a corresponding example.

\begin{example}
Assume $\lambda_m = m^{-2\alpha}$ for all $m \in \N$ and some $\alpha > 0$. Then we need to estimate
\begin{align*}
				n^{\rm ent}\left( \epsilon/\sqrt{\lambda_{1}\cdot\ldots\cdot\lambda_{d-1}}, 1 \right) 
				&= n^{\rm ent}\left( \epsilon \cdot (d-1)!^{\alpha}, 1 \right) \\
				&= \# \left\{ m\in\N \sep m^{-2\alpha} > (\epsilon \cdot (d-1)!^{\alpha})^2 \right\} \\ 
				&= \# \left\{ m\in\N \sep m < \frac{1}{\epsilon^{1/\alpha} \cdot (d-1)!} \right\} \geq \frac{1}{\epsilon^{1/\alpha} \cdot (d-1)!} - 1.
\end{align*}
Therefore,
\begin{gather*}
				n^{\rm asy}(\epsilon,d) 
				\geq \frac{1}{\epsilon^{1/\alpha} \cdot (d-1)!} - d 
				= d \left( \frac{1}{\epsilon^{1/\alpha} \cdot d!} - 1 \right) 
				\quad \text{if} \quad d \geq 2 \quad \text{and} \quad \epsilon < \epsilon_d^{\rm init}.
\end{gather*}
Since in this case the initial error $\epsilon_d^{\rm init}$ 
for the $d$-variate problem equals $1/d!^\alpha$, we need at least
\begin{gather*}
				n^{\rm asy}(\epsilon' \cdot \epsilon_d^{\rm init},d) \geq d \left( \frac{1}{(\epsilon')^{1/\alpha}} - 1 \right)
\end{gather*}
linear functionals to improve the initial error by a factor $\epsilon' < 1$.
Because this bound grows linearly with the dimension the problem is not 
strongly polynomially tractable with respect to the \textit{normalized
error criterion}. 
Nevertheless, the sequence $\lambda$ is an element of $l_{1/\alpha}$, say, 
which implies strong polynomial tractability
for the \textit{absolute error criterion} due to \autoref{thm_asy_abs}.
\end{example}

\subsection{Tractability of antisymmetric problems (normalized error)}\label{sect_normkrit}

Up to now every complexity assertion in this paper 
was mainly based on \autoref{prop_NW} which dealt with the 
general situation of arbitrary compact
linear operators between Hilbert spaces and with the absolute error criterion.
While investigating tractability properties of $I$-symmetric 
problems with respect to the normalized error criterion, 
we were able to use assertions from the absolute error setting.
Since for $I$-antisymmetric problems 
the structure of the initial error is more complicated,
this approach will not work again.
Therefore, we start this subsection with a modified version 
of another known theorem
by Novak and Wo\'zniakowski \cite[Theorem 5.2]{NW08}.

\begin{prop}\label{prop_NW2}
			Consider a family of compact linear operators $\{T_d \colon F_d \nach G_d \sep d\in\N\}$
			between Hilbert spaces and the normalized error criterion in the worst case setting. 
			Furthermore, for $d\in\N$ let $(\lambda_{d,i})_{i\in\N}$ denote the non-negative sequence 
			of eigenvalues of ${T_d}^\dagger T_d$ \wrt a non-increasing ordering. 
			\begin{itemize}
			\item If $\{T_d\}$ is polynomially tractable with the constants
						$C,p>0$ and $q\geq0$ then for all $\tau>p/2$ we have
						\begin{gather}\label{sup_condition2}
										C_\tau = \sup_{d\in\N} \frac{1}{d^r} \left( \sum_{i=f(d)}^\infty \left( \frac{\lambda_{d,i}}{\lambda_{d,1}}\right)^\tau \right)^{1/\tau} < \infty,
						\end{gather}
						where $r=2q/p$ and $f\colon \N \nach \N$ with $f(d) \equiv 1$.\\
						In this case, $C_\tau^\tau \leq 1+C+C^{2\tau/p}\,\zeta(2\tau/p)$.
			\item If \link{sup_condition2} is satisfied for some parameters $r \geq 0$, $\tau >0$ 
						and a	function $f\colon\N \nach \N$ such that $f(d) = \ceil{C\, d^{q}}$, where $C>0$ and $q \geq 0$, 
						then the problem is polynomially tractable
						and $n(\epsilon'\cdot \epsilon_d^{\mathrm{init}},d)\leq (C+C_\tau^\tau)\, (\epsilon')^{-2\tau} d^{\max{q,r\tau}}$.
			\end{itemize}
\end{prop}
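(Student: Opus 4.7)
The plan is to adapt the argument behind \autoref{prop_NW} to the normalized setting. Since the optimal algorithm for compact linear operators between Hilbert spaces yields $e(n,d)=\sqrt{\lambda_{d,n+1}}$ and $\epsilon_d^{\mathrm{init}}=\sqrt{\lambda_{d,1}}$, we have
\[
n(\epsilon'\cdot\epsilon_d^{\mathrm{init}},d)\;=\;\#\!\left\{i\in\N\,:\,\lambda_{d,i}/\lambda_{d,1}>(\epsilon')^2\right\}.
\]
Abbreviating $\mu_{d,i}:=\lambda_{d,i}/\lambda_{d,1}$, so that $\mu_{d,1}=1$ and the sequence $(\mu_{d,i})_{i}$ is non-increasing, the proof reduces to bounds on sums of the form $\sum_i\mu_{d,i}^\tau$.

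For the necessary direction I would fix $\tau>p/2$ and turn the polynomial tractability bound $n(\epsilon',d)\leq C(\epsilon')^{-p}d^q$ (valid for $\epsilon'\in(0,1]$) into a pointwise estimate on the $\mu_{d,i}$: for any integer $i\geq 2$ with $i-1\geq Cd^q$, the choice $\epsilon':=(Cd^q/(i-1))^{1/p}\in(0,1]$ gives $n(\epsilon',d)\leq i-1$, so that $\mu_{d,i}\leq (Cd^q/(i-1))^{2/p}$. With $K:=\lceil Cd^q\rceil$ I would then split $\sum_{i\geq 1}\mu_{d,i}^\tau$ into (i) the singleton $i=1$ contributing exactly $1$, (ii) the block $2\leq i\leq K$ contributing at most $K-1\leq Cd^q$ (as each $\mu_{d,i}\leq 1$), and (iii) the tail $i\geq K+1$, which the pointwise bound controls by $C^{2\tau/p}d^{2q\tau/p}\sum_{j\geq K}j^{-2\tau/p}\leq C^{2\tau/p}\zeta(2\tau/p)\,d^{r\tau}$, using $2\tau/p>1$. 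Dividing by $d^{r\tau}$ and using $d\geq 1$ together with $r\tau>q$ to absorb the additive terms into a multiple of $d^{r\tau}$ will match the claimed bound $C_\tau^\tau\leq 1+C+C^{2\tau/p}\zeta(2\tau/p)$.

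For the sufficient direction I would denote $m_d:=n(\epsilon'\cdot\epsilon_d^{\mathrm{init}},d)$. If $m_d<f(d)=\lceil Cd^q\rceil$ then $m_d\leq Cd^q$ is immediate. Otherwise, every $\mu_{d,i}$ with $i\leq m_d$ satisfies $\mu_{d,i}>(\epsilon')^2$, so a Markov-type estimate applied to the tail controlled by \link{sup_condition2} gives
\[
m_d-f(d)+1 \;\leq\; (\epsilon')^{-2\tau}\sum_{i=f(d)}^{m_d}\mu_{d,i}^\tau \;\leq\; (\epsilon')^{-2\tau}\,C_\tau^\tau\,d^{r\tau}.
\]
Combining the two cases and using $(\epsilon')^{2\tau}\leq 1$ finally delivers $m_d\leq (C+C_\tau^\tau)(\epsilon')^{-2\tau}\,d^{\max{q,r\tau}}$, i.e.\ polynomial tractability with the asserted constants.

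The only truly delicate point is matching the exact additive structure $1+C+C^{2\tau/p}\zeta(2\tau/p)$ in the necessary direction. Because $\mu_{d,1}\equiv 1$ never decays, the $i=1$ term must be peeled off and kept separate from any summability argument, and the three resulting contributions then have to be folded into a single multiple of $d^{r\tau}$ using $d\geq 1$ and the strict inequality $r\tau>q$. Apart from this bookkeeping the reasoning is a direct parallel of the corresponding argument in the proof of \autoref{prop_NW}.
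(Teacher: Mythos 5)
Your proposal is correct and follows essentially the same route as the paper: both directions rest on the pointwise bound $\mu_{d,i}\leq(Cd^q/(i-1))^{2/p}$ derived from the tractability estimate and on the monotonicity of the normalized eigenvalues; your explicit separation of the $i=1$ term and the Markov-type counting in the sufficient direction are only cosmetic reformulations of the paper's split at $f^*(d)=\lceil(1+C)d^q\rceil$ and its pointwise bound on $\lambda_{d,n+1}$. One tiny imprecision: you invoke the \emph{strict} inequality $r\tau>q$, but when $q=0$ this degenerates to equality ($r\tau=q=0$); what you actually need and have is $r\tau\geq q$, which follows from $\tau>p/2$, so the argument still closes.
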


Note that this shows that (strong) polynomial tractability 
is characterized by the boundedness of the sum over the normalized eigenvalues, 
were we are allowed to omit the $C d^q$ largest of them.
Of course, our results are equivalent to the assertions given by 
Novak and Wo\'zniakowski~\cite{NW08}, as one can see easily. 
But now the connection between the different error criterions is more obvious. 
From this point of view \autoref{prop_NW2} reads more natural than \cite[Theorem 5.2]{NW08}.
The key is to apply the same proof technique for both the assertions.

Moreover, observe that also the theorem in \cite{NW08} for the normalized error criterion
includes further assertions concerning, e.g., the exponent of strong polynomial tractability. 
Again our proof implies the same results.
\vspace{5pt}

Similar to the former sections we continue 
with an application of \autoref{prop_NW2} 
to our antisymmetric tensor product problems.
To this end, assume $S_1 \colon H_1 \nach G_1$ to be a compact linear operator between two Hilbert spaces and let $\lambda=(\lambda_m)_{m\in\N}$ denote the sequence of non-negative eigenvalues of $W_1=S_1^\dagger S_1$ \wrt a non-increasing ordering. 
Moreover, for $d > 1$ let $\leer \neq I_d = \{1,\ldots,d\}$.
Assume $S_d$ to be the linear tensor product problem restricted to the $I_d$-antisymmetric
subspace $\AI_{I_d}(H_d)$ of the $d$-fold tensor product space $H_d$ and consider the
worst case setting \wrt the normalized error criterion.
Finally, let $b_d$ denote the number of coordinates 
without antisymmetry conditions in dimension $d$, \ie$b_d=d-a_d$, where $a_d=\#I_{d}$
for $d\in\N$.

\begin{prop}[Necessary conditions, antisymmetric case]
			Under these assumptions the fact that $\{S_d\}$ is polynomially tractable
			with the constants $C,p>0$ and $q\geq 0$
			implies that $\lambda \in \l_\tau$ for all $\tau>p/2$.\\ 
			Moreover, for $d$ tending to infinity, 
			$\epsilon_d^{\rm init}$ tends to zero faster 
			than the inverse of any polynomial and $b_d\in\0(\ln d)$.
			Thus, $\lim_{d\nach \infty} a_d/d= 1$.\\
			In addition, if $\{S_d\}$ is strongly polynomially tractable 
			then $b_d \in \0(1)$.
\end{prop}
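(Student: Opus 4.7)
The plan is to apply \autoref{prop_NW2} and exploit the product structure of the eigenvalues of the antisymmetric problem, exactly as in \autoref{prop_nec_asy} but now with the sharper choice $f(d)\equiv 1$ that the normalized criterion permits. First, specialising \autoref{prop_NW2} to $d=1$ (where no antisymmetry is imposed so $\lambda_{1,i}=\lambda_i$ and $\lambda_{1,1}=\lambda_1>0$) gives $\sum_{i=1}^\infty (\lambda_i/\lambda_1)^\tau<\infty$ for every $\tau>p/2$, hence $\lambda\in\l_\tau$ for all such $\tau$.

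Next I would derive the bound on $b_d$. Splitting the index set $\nabla_d$ as in the proofs of \autoref{prop_suf_sym} and \autoref{prop_nec_asy} along the $b_d$ symmetry-free and $a_d$ antisymmetric coordinates yields
\begin{gather*}
  \sum_{k\in\nabla_d}\lambda_{d,k}^\tau
  =\left(\sum_{m=1}^\infty \lambda_m^\tau\right)^{b_d}\;\sum_{\substack{j\in\N^{a_d},\\ j_1<\ldots<j_{a_d}}}\lambda_{a_d,j}^\tau
  \geq \norm{\lambda\sep\l_\tau}^{\tau b_d}\cdot \lambda_1^\tau\cdot\ldots\cdot\lambda_{a_d}^\tau.
\end{gather*}
Since $\lambda_{d,\psi(1)}=\lambda_1^{b_d}\cdot\lambda_1\cdot\ldots\cdot\lambda_{a_d}$, dividing by $\lambda_{d,\psi(1)}^\tau$ gives the clean lower bound
\begin{gather*}
  \sum_{i=1}^\infty\left(\frac{\lambda_{d,i}}{\lambda_{d,1}}\right)^\tau \;\geq\; \left(\frac{\norm{\lambda\sep\l_\tau}^\tau}{\lambda_1^\tau}\right)^{b_d}.
\end{gather*}
Combining with the upper bound $C_\tau^\tau d^{r\tau}$ from \autoref{prop_NW2} (where $r=2q/p$), taking logarithms and using that $\lambda_2>0$ forces $\ln(\norm{\lambda\sep\l_\tau}^\tau/\lambda_1^\tau)>0$, I obtain $b_d\leq C_1\ln d+C_2$, i.e.\ $b_d\in\0(\ln d)$. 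In the strongly polynomially tractable case $q=r=0$, so the right-hand side is a constant and $b_d\in\0(1)$. The identity $d=a_d+b_d$ then yields $a_d/d\nach 1$.

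Finally I would verify the super-polynomial decay of the initial error $\epsilon_d^{\rm init}=\sqrt{\lambda_1^{b_d}\cdot\lambda_1\cdot\ldots\cdot\lambda_{a_d}}$. From $\lambda\in\l_\tau$ and the ordering one has $\lambda_m\leq \norm{\lambda\sep\l_\tau}/m^{1/\tau}$ (cf.\ Step~1 in the proof of \autoref{prop_suf_sym}), so
\begin{gather*}
  \prod_{i=1}^{a_d}\lambda_i \;\leq\; \frac{\norm{\lambda\sep\l_\tau}^{a_d}}{(a_d!)^{1/\tau}}.
\end{gather*}
Since $a_d=d-b_d\geq d-C_1\ln d-C_2\nach \infty$, Stirling's formula shows that this product decays faster than the inverse of any polynomial in $d$. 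The remaining factor $\lambda_1^{b_d}$ grows at most polynomially in $d$ because $b_d\in\0(\ln d)$, so the product inside $\epsilon_d^{\rm init}$, and hence $\epsilon_d^{\rm init}$ itself, tends to zero faster than every polynomial.

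The main obstacle is getting a lower bound on the tail sum of normalized eigenvalues that is sharp enough to track $b_d$ precisely; this is where antisymmetry is crucial, because the extra factor $\lambda_1\cdot\ldots\cdot\lambda_{a_d}$ arising from the strict inequality constraint in $\nabla_d$ cancels exactly with the corresponding contribution to $\lambda_{d,\psi(1)}$, leaving only the ``undesired'' factor $(\norm{\lambda\sep\l_\tau}^\tau/\lambda_1^\tau)^{b_d}$. The remaining steps are then essentially the same comparison arguments already used in \autoref{sect_asy_abs}.
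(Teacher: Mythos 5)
Your proposal follows the paper's argument in every essential step: the specialization of \autoref{prop_NW2} to $d=1$ to get $\lambda\in\l_\tau$, the splitting of $\nabla_d$ and the cancellation of $\lambda_1\cdots\lambda_{a_d}$ against $\lambda_{d,\psi(1)}$ to isolate $(\norm{\lambda\sep\l_\tau}^\tau/\lambda_1^\tau)^{b_d}$, and the resulting logarithmic (resp.\ constant) bound on $b_d$. The only place you diverge is the super-polynomial decay of $\epsilon_d^{\rm init}$: the paper simply refers back to the contradiction argument in Step~2 of \autoref{prop_nec_asy}, whereas you give a direct computation using the already-established bound $b_d\in\0(\ln d)$ together with $\lambda_m\leq\norm{\lambda\sep\l_\tau}\,m^{-1/\tau}$ and Stirling; this is a clean simplification made possible precisely because, in the normalized setting, the logarithmic bound on $b_d$ is available \emph{before} the initial-error estimate rather than having to be extracted simultaneously with it.
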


\begin{proof}
From \autoref{prop_NW2} it follows that there is some $C_1>0$ such that for every $d\in\N$
\begin{gather*}
			\frac{1}{\lambda_{d,\psi(1)}^\tau} \sum_{k\in\nabla_d} \lambda_{d,\psi(v)}^\tau 
			= \sum_{v=1}^\infty \left( \frac{\lambda_{d,\psi(v)}}{\lambda_{d,\psi(1)}} \right)^\tau 
			\leq C_1 d^{2\tau q/p},
\end{gather*}
if $\tau > p/2$. 
Once more the rearrangement function $\psi$ and 
the index set $\nabla_d$ are given as in~\link{reordered_eigenvalues}.
Indeed, the proof of \autoref{prop_NW2} yields that it is sufficient to take
$C_1 = 1+C+C^{2\tau/p} \zeta(2\tau/p)$.
In particular, for $d=1$ it is $\nabla_1=\N$ and $\lambda_{1,k}=\lambda_k$, for $k\in\N$,
such that we have $\psi=\id$ because of the ordering of $\lambda$.
Hence, we conclude
\begin{gather*}
			\norm{\lambda \sep \l_\tau}^\tau = \sum_{k=1}^\infty \lambda_k^\tau \leq C_1 \lambda_1^\tau < \infty.
\end{gather*}
In other words, $\lambda \in \l_\tau$.
Moreover, like with the arguments of Step 1 in the proof of \autoref{prop_nec_asy},
it follows
\begin{gather}\label{est_norm}
			\left( \frac{\norm{\lambda \sep \l_\tau}^\tau}{\lambda_1^\tau} \right)^{b_d}
			\leq C_1 d^{2\tau q/p}, 	\quad d\in\N,
\end{gather}
since $\lambda_{d,\psi(1)}=\lambda_1^{b_d}\cdot \lambda_1 \cdot \ldots \cdot \lambda_{a_d}$
and $\norm{\lambda \sep \l_\tau}^\tau > \lambda_1^\tau$ due to the general assertion $\lambda_2>0$.
Thus, polynomial tractability of $\{S_d\}$ implies $b_d \leq C_2 \ln(d)$ for some $C_2>0$,
\ie$b_d \in \0(\ln d)$.
Therefore, obviously, we have
\begin{gather*}
			1\geq \frac{a_d}{d} 
			= 1- \frac{b_d}{\ln d} \cdot \frac{\ln d}{d} 
			\geq 1-C_2 \cdot \frac{\ln d}{d} \longrightarrow 1, 
			\quad d\nach \infty.
\end{gather*}
The proof that strong polynomial tractability leads to $b_d \in \0(1)$ 
can be obtained using \link{est_norm} with the same arguments as before and $q=0$.
Finally, we need to show the assertion concerning $\epsilon_d^{\rm init}$.
To this end, we refer to Step 2 in the proof of \autoref{prop_nec_asy}.
\end{proof}


\section{Application: wave functions}\label{sect_wave}


During the last decades there has been considerable interest in finding approximations 
of \textit{wave functions}, e.g., solutions of the electronic Schr\"{o}dinger equation. 
Due to the so-called \textit{Pauli principle} of quantum physics only functions with certain
(anti-) symmetry properties are of physical interest. 

In this last section of the present paper we briefly introduce wave functions
and show how our results allow to handle the approximation problem
for such classes of functions.
For a more detailed view, see, 
e.g, Hamaekers~\cite{H09}, Yserentant~\cite{Y10}, or Zeiser~\cite{Z10}.
Furthermore, for a comprehensive introduction to the topic, 
as well as a historical survey, 
we refer the reader to Hunziker and Sigal~\cite{HS00}
and Reed and Simon~\cite{RS78}.

In particular, the notion of multiple partial antisymmetry with respect to 
two sets of coordinates is useful for describing wave functions~$\Psi$.
In computational chemistry such functions 
occur as models which describe quantum states of certain physical $d$-particle systems. 
Formally, these functions depend on $d$ blocks of variables $y_i=(x^{(i)},s^{(i)})$, 
for $i=1,\ldots,d$, which represent the spacial coordinates 
$x^{(i)}=(x_1^{(i)},x_2^{(i)},x_3^{(i)})\in\R^3$ and 
certain additional intrinsic parameters 
$s^{(i)} \in C$ of each particle $y$ within the system.
Hence, rearranging the arguments such that 
$x=(x^{(1)},\ldots,x^{(d)})$ and $s=(s^{(1)},\ldots,s^{(d)})$ yields that
\begin{gather*}
				\Psi \colon (\R^{3})^{d} \times C^d \nach \R, \quad (x,s) \mapsto \Psi(x,s).
\end{gather*}
In the case of systems of electrons one of the most important parameters 
is called \textit{spin} and it can take only two values, i.e., 
$s^{(i)}\in C=\{-\frac{1}{2}, + \frac{1}{2}\}$. 
Due to the Pauli principle 
the only wavefunctions $\Psi$ 
that are physically admissible are those which are antisymmetric 
in the sense that for $I\subset\{1,\ldots,d\}$ and $I^C=\{1,\ldots,d\}\setminus I$
\begin{gather*}
				\Psi(\pi(x),\pi(s))) = (-1)^{\abs{\pi}} \Psi(x,s) \quad \text{for all} \quad \pi \in \S_I \cup \S_{I^C}.
\end{gather*}
Thus, $\Psi$ changes its sign if we replace any particles $y_i$ and $y_j$ 
by each other which posses the same spin, \ie$s^{(i)}=s^{(j)}$.
So, the set of particles, and therefore also the set of spacial coordinates,
naturally split into two groups $I_+$ and $I_-$.
In detail, for wave functions of $d$ particles $y_i$
we can (without loss of generality) assume
that the first $\#I_+$ indices $i$ belong to the group of positive spin, 
whereas the rest of them possess negative spin, 
\ie$I_+=\{1,\ldots,\#I_+\}$ and $I_-=\{\#I_+ + 1,\ldots, d\}$.

In physics it is well-known that some problems, e.g., the electronic Schr\"{o}dinger equation,
which involve (general) wave functions can be reduced to a bunch of similar problems,
where each of them only acts on functions $\Psi_s$ 
out of a certain Hilbert space $F_d=F_d(s)$.
That is,
\begin{gather*}
				\Psi_s =\Psi(\cdot,s) \in F_d=\{f \colon (\R^{3})^{d} \nach \R\}
\end{gather*}
with a given fixed spin configuration $s\in C^d$. 
Of course, every possible spin configuration~$s$ 
corresponds to exactly one choice 
$I_+\subset\{1,\ldots,d\}$ of indices. 
Moreover, it is known that $F_d$ is a Hilbert space which
possesses a tensor product structure.
Therefore, we can model wave functions 
as elements of certain classes of smoothness, 
e.g., $F_d \subset H_d=W_2^{(1,\ldots,1)}(\R^{3d})$, 
as Yserentant~\cite{Y10} recently did,
and incorporate spin properties 
by using the projections of the type 
$\AI = \AI_{I_+} \circ \AI_{I_-}$, 
as defined in \autoref{sect_antisym}.

In particular, \autoref{lemma_basis} yields 
\begin{gather*}
				F_d = \AI(H_d) = \AI_{I_+}(H_{\#I_+}) \otimes \AI_{I_-}(H_{\#I_-})
\end{gather*}
and the system of all
\begin{gather*}
				\tilde{\xi_k} = \sqrt{\#S_{I_+} \cdot \#S_{I_-}} \cdot \AI(\eta_k), \quad k \in \tilde{\nabla}_d,
\end{gather*}
with
\begin{gather*}
				\tilde{\nabla}_d = \{ k=(i,j) \in \N^{\#I_++\#I_-} \sep i_1 < i_2 < \ldots < i_{\#I_+} \text{ and } j_{1} < \ldots < j_{\#I_-} \}
\end{gather*}
builds an orthonormal basis of $F_d=\AI(H_d)$, where
the set $\{\eta_k \sep k=(k_1,\ldots,k_d)\in \N^d \}$ 
is once again assumed to be an orthonormal 
tensor product basis of $H_d=H_1\otimes \ldots \otimes H_1$ 
constructed with the help of
$\{\eta_i \sep i\in\N\}$, an arbitrary orthonormal basis of $H_1$. 

Note that in the former sections the underlying 
Hilbert space $H_1$ always consists of 
univariate functions. 
In contrast wave functions of one particle depend on three variables, 
but we want to stress the point that this is just a formal issue.
However, this approach radically decreases the degrees of freedom 
and improves the solvability of certain problems $S_d$ 
like the approximation problem, \ie$S_1=\id\colon H_1 \nach G_1$, 
considered in connection with the electronic Schr\"{o}dinger equation.

\autoref{theo_opt_algo} then provides an algorithm which is
optimal for the $G_d$-approximation of 
$d$-particle wave functions in $F_d$
with respect to all linear algorithms 
that use at most $n$ continuous linear functionals.
Moreover, the error can be calculated 
exactly in terms of the squared singular values 
$\lambda = (\lambda_m)_{m\in\N}$ of $S_1$.

Furthermore, it is possible to prove 
a modification of \autoref{thm_asy_abs} 
for problems dealing with wave functions.
In fact, for the mentioned approximation problem 
polynomial tractability as well as strong polynomial tractability are
equivalent to the fact that the sequence $\lambda$ 
of the squared singular values
of the univariate problem belong to some $\l_\tau$-space
if we consider the absolute error criterion.
The reason is that all the assertions in \autoref{sect_asy_abs}
can be easily extended to the multiple partially antisymmetric case.
In detail, if we denote the number of 
antisymmetric coordinates $x^{(i)}$ within
each antisymmetry group $I_d^{m}\subset\{1,\ldots,d\}$ 
by $a_{d,m}$, $m=1,\ldots,M$,
then the constraint $a_d + b_d = d$ extends to 
$a_{d,1}+\ldots+a_{d,M}+b_d=d$.
Here $b_d$ again denotes the number of coordinates 
without any antisymmetry condition.
In conclusion, the sufficient condition \link{suf_condition} in
\autoref{prop_suf_asy} transfers to
\begin{gather*}
			\frac{1}{d} \sum_{m=1}^M \ln(a_{d,m}!) \geq \norm{\lambda \sep \l_\tau}^\tau, 
			\quad \text{for all} \quad d\geq d_0,
\end{gather*}
which is always satisfied in the case of wave functions, 
since then $M=2$ and at least the cardinality $a_{d,m}$ of one of the groups of the same spin needs to grow linearly with the dimension~$d$.

\section*{Acknowledgments}
The author thanks E. Novak and H. Wo{\'z}niakowski for 
their valuable comments on this paper.
\addcontentsline{toc}{chapter}{Acknowledgments}

\bibliographystyle{acm}
\bibliography{Bibliography}   
\addcontentsline{toc}{chapter}{References}

\newpage
\section*{Appendix}
\addcontentsline{toc}{chapter}{Appendix}


\subsection*{Proof of \autoref{projection} in \autoref{sect_antisym}}


We show \autoref{projection} in the case of function spaces.
\begin{proof}
Obviously, $P \in \{\SI_I, \AI_I\}$ is well-defined due to the assumptions \ref{A1} and \ref{A2}. 
The linearity directly follows from the definition and, using \ref{A3}, 
the operator norm is bounded by $\max{c_\pi \sep \pi \in \S_I}$.

To show that the operators are idempotent, \ie$P^2=P$, 
we first prove that $\AI_I (f)$ satisfies~\link{antisym} for every $f \in F$. 
Therefore, we use the representation
\begin{gather*}
				(\AI_I(f))(\pi(x))
				= \sum_{\sigma \in \S_I} (-1)^{\abs{\sigma}} f(\sigma(\pi (x))) 
				= \sum_{\lambda \in \S_I} (-1)^{\abs{\lambda} + \abs{\pi}} f(\lambda(x))
				= (-1)^{\abs{\pi}} (\AI_I(f))(x)
\end{gather*}
for every fixed $\pi\in \S_I$. 
Here we imposed $\lambda = \sigma \circ \pi \in \S_I$ and used 
\begin{gather*}
				\abs{\lambda \circ \pi^{-1}} = \abs{\lambda} + \abs {\pi^{-1}} = \abs{\lambda} + \abs {\pi}.
\end{gather*}
Hence, we have $\AI_I(F) \subset \{ f \in F \sep f \text{ satisfies } \link{antisym} \}$.
In a second step, it is easy to check that for every function $g\in F$ which satisfies \link{antisym}
it is $\AI_I(g)=g$. Thus, $\{ f \in F \sep f \text{ satisfies } \link{antisym} \} \subset \AI_I(F)$
and $\AI_I$ is a projector onto $\AI_I(F)$.

Since the same arguments also apply for the symmetrizer $\SI_I$ this shows \link{antisymsubspace}, 
as well as $P^2=P$ for $P\in\{\SI_I, \AI_I\}$.
Because of the boundedness of the operators the subsets $\SI_I(F)$ and $\AI_I(F)$ 
are closed linear subspaces of $F$ and we obtain the orthogonal
decompositions
\begin{gather*}
				F = \SI_I(F) \oplus (\SI_I(F))^\bot = \AI_I(F) \oplus (\AI_I(F))^\bot,
\end{gather*}
where the $\bot$ denotes the orthogonal complement with respect to 
$\distr{\cdot}{\cdot}_F$, \ie the image of the projectors $(\id - \SI_I)$ and $(\id-\AI_I)$, respectively.
\end{proof}

The proof of \autoref{projection} in the case of arbitrary
tensor product Hilbert spaces works exactly in the same way.


\subsection*{Proof of \autoref{lemma_basis} in \autoref{sect_antisym}}


We prove \autoref{lemma_basis} in the case of function spaces.
For the case of arbitrary tensor product Hilbert spaces
only slight modifications are needed.
Indeed, the only difference is the conclusion of 
formula~\link{formula_coeff} in Step 2.
In the general setting this simply follows from our definitions.

\begin{proof}
\textit{Step 1}. 
We start by proving orthonormality.
Therefore, let us recall \link{antisym_basis}. 
To abbreviate the notation further, we suppress
the index $H_d$ at the inner products $\distr{\cdot}{\cdot}_{H_d}$ 
in this proof. For $P_I=\AI_I$ and $j,k \in \nabla_d$ easy calculus yields
\begin{align*}
				\distr{\xi_j}{\xi_k} 
				&= \frac{\# \S_I}{\sqrt{M_{I}(j)! \cdot M_{I}(k)!}} \distr{\AI_I (\eta_{d,j})}{\AI_I (\eta_{d,k})} \\
				&= \frac{1}{\# \S_I \sqrt{M_{I}(j)! \cdot M_{I}(k)!}} \sum_{\pi,\sigma \in \S_I} (-1)^{\abs{\pi}+\abs{\sigma}} \distr{\eta_{d,\pi(j)}}{\eta_{d,\sigma(k)}}.
\end{align*}
Of course, up to the factor controlling the sign, the same is true for the case $P_I=\SI_I$.
Assume now there exists $m\in \{1,\ldots,d\}$ such that $j_m \neq k_m$. 
Then the ordering of $j,k\in\nabla_d$ implies that
$\pi(j)\neq\sigma(k)$ for all $\sigma,\pi \in \S_I$,
since $\pi$ and $\sigma$ leave the coordinates $m\in I^C$ fixed.
Hence, we conclude that we have
$\pi(j) = \sigma(k)$ only if $j=k$.

At this point we have to distinguish the antisymmetric and the symmetric case.
For $P=\AI_I$ the only way to conclude $\pi(j) = \sigma(k)$ is to claim $j=k$ and $\pi=\sigma$.
Furthermore, we see that in the antisymmetric case we have $M_{I}(j)!=1$ for all $j\in\nabla_d$, 
since all coordinates~$j_l$, where $l\in I$, differ.
Therefore, in this case the last inner product coincides with 
$\delta_{j,k} \cdot \delta_{\pi,\sigma}$ because of the mutual orthonormality
of $\{\eta_{d,j} \sep j\in \N^d \}$.
Hence,
\begin{gather*}
				\distr{\xi_j}{\xi_k} = \frac{1}{\# \S_I} \sum_{\pi \in \S_I} (-1)^{2\abs{\pi}} \delta_{j,k} = \delta_{j,k} \quad \text{for all} \quad j,k\in\nabla_d
\end{gather*}
as claimed. 

So, let us consider the case $P_I=\SI_I$ and $j=k\in \nabla_d$, 
because we already saw that otherwise $\distr{\xi_j}{\xi_k}$ equals zero.
Then for fixed $\sigma \in \S_I$ there are $M_{I}(j)!$ different permutations
$\pi \in \S_I$ such that $\pi(j) = \sigma(j)$.
This leads to
\begin{gather*}
				\distr{\xi_j}{\xi_j} = \frac{1}{\# \S_I \cdot M_{I}(j)!} \sum_{\sigma \in \S_I} M_{I}(j)! = 1
\end{gather*}
and completes the proof of orthonormality.

\textit{Step 2}. It remains to show that 
the span of $\{\xi_k \sep k\in \nabla_d\}$ 
is dense in $P_I(H_d)$ for $P\in \{\SI_I,\AI_I\}$.
To this end, note that every multi-index $j\in\N^d$ 
can be represented by a uniquely defined multi-index $k\in \nabla_d$
and exactly $M_I(k)!$ different permutations $\pi\in\S_I$
such that $j=\pi(k)$.

Now assume $f \in \AI_I(H_d)$, \ie$f\in H_d$ satisfies \link{antisym}. 
Then the expansion of $f(\pi(x))$ with respect to the basis functions 
$\{\eta_{d,j} \sep j\in\N^d\}$ in $H_d$ yields
for $\pi \in \S_I$
\begin{gather*}
				(-1)^{\abs{\pi}} \sum_{j\in \N^d} \distr{f}{\eta_{d,j}} \cdot \eta_{d,j}(x) 
				= \sum_{k\in \N^d} \distr{f}{\eta_{d,k}} \cdot \eta_{d,k}(\pi(x)) \quad \text{for every} 
				\quad x\in D^d.
\end{gather*}
Similar to the arguments used in \link{pi_inside} we have 
$\eta_{d,k}(\pi(x)) = \eta_{d,\sigma(k)}(x)$ with $\sigma = \pi^{-1}$. 
Therefore, we conclude for $x\in D^d$
\begin{gather*}
				\sum_{j\in \N^d}  \left( (-1)^{\abs{\pi}} \cdot \distr{f}{\eta_{d,j}} \right) \cdot \eta_{d,j}(x) 
				= \sum_{k\in \N^d} \distr{f}{\eta_{d,\pi(\sigma(k))}} \cdot \eta_{d,\sigma(k)}(x) 
				= \sum_{j\in \N^d} \distr{f}{\eta_{d,\pi(j)}} \cdot \eta_{d,j}(x).
\end{gather*}
Because the expansion is uniquely defined we get
\begin{gather}\label{formula_coeff}
				(-1)^{\abs{\pi}} \cdot \distr{f}{\eta_{d,j}} 
				= \distr{f}{\eta_{d,\pi(j)}} \quad \text{for all} \quad j\in\N^d \quad \text{and} \quad \pi \in \S_I.
\end{gather}
Using the observations from the beginning of this step we can decompose 
the basis expansion of $f\in \AI_I(H_d)\subset H_d$ and use the derived 
formula~\link{formula_coeff} to get
\begin{gather*}
				f = \sum_{j\in\N^d} \distr{f}{\eta_{d,j}} \eta_{d,j} 
				= \sum_{k\in \nabla_d} \sum_{\sigma \in \S_I}\frac{ \distr{f}{\eta_{d,\sigma(k)}} \eta_{d,\sigma(k)} }{M_I(k)!} 
				= \sum_{k\in \nabla_d} \frac{1}{M_I(k)!} \sum_{\sigma \in \S_I} (-1)^{\abs{\sigma}} \distr{f}{\eta_{d,k}} \eta_{d,\sigma(k)}.
\end{gather*}
Now \link{antisym_basis} yields that
\begin{gather*}
				f = \sum_{k\in \nabla_d} \sqrt{\frac{\# \S_I}{M_I(k)!}} \cdot \distr{f}{\eta_{d,k}} \cdot \sqrt{\frac{\# \S_I}{M_I(k)!}} \cdot \AI_I(\eta_{d,k}).
\end{gather*}
Furthermore, summing up \link{formula_coeff} with respect to $\pi$ leads to
\begin{gather*}
				\distr{f}{\eta_{d,k}} 
				= \frac{1}{\# \S_I} \sum_{\pi \in \S_I} (-1)^{\abs{\pi}} \distr{f}{\eta_{d,\pi(k)}} 
				=  \distr{f}{ \frac{1}{\# \S_I} \sum_{\pi \in \S_I} (-1)^{\abs{\pi}} \eta_{d,\pi(k)}} 
				=  \distr{f}{\AI_I (\eta_{d,k})}, 
\end{gather*}
for $k\in \nabla_d$, such that finally $f\in \AI_I(H_d)$ possesses the following representation
\begin{gather*}
				f = \sum_{k \in \nabla_d} \distr{f}{\xi_k} \cdot \xi_k,
\end{gather*}
since $\xi_k=\sqrt{\# \S_I / M_I(k)!} \cdot \AI_I(\eta_{d,k})$ per definition. 
This proves the assertion for the case $P_I=\AI_I$.
The remaining case $P_I=\SI_I$ can be treated in the same way.
\end{proof}


\subsection*{Proof of \autoref{theo_bestapprox} in \autoref{sect_auxresults}}


\begin{proof}
The proof is organized as follows. 
First we show that the problem operator $S_d$ and 
the (anti-) symmetrizer $P_I$ commute on $H_d$, \ie it holds \link{commute}.
In a second step we conclude \link{bestapprox} out of this. 
The (anti-) symmetry of $A^*f$ for an optimal algorithm $A^*$ then follows immediately.

\textit{Step 1}. 
Assume $E_d=\{\eta_{d,j} \sep j\in\N^d\}$ to be an arbitrary tensor product ONB of $H_d$,
as defined in \link{basis_eta}.  
Then, for fixed $j\in\N^d$, formula \link{antisym_basis} 
and the structure of the linear tensor product operator 
$S_d=S_1 \otimes \ldots \otimes S_1$ yields in the case $P_I=\AI_I$
\begin{eqnarray*}
				S_d(\AI_I^H (\eta_{d,j})) 
				&=& S_d\left(\frac{1}{\# \S_I} \sum_{\pi \in \S_I} (-1)^{\abs{\pi}} \bigotimes_{l=1}^d \eta_{j_{\pi(l)}}\right)\\ 
				&=& \frac{1}{\# \S_I} \sum_{\pi \in \S_I} (-1)^{\abs{\pi}} \bigotimes_{l=1}^d S_1(\eta_{j_{\pi(l)}})
				= \AI_I^G (S_d (\eta_{d,j})).
\end{eqnarray*} 
Obviously, the same is true for $P_I=\SI_I$.
Hence, it holds \link{commute} at least on the set of basis elements $E_d$ of $H_d$.
Because of the representation
\begin{gather*}
				g = \sum_{j\in\N^d} \distr{g}{\eta_{d,j}}_{H_d} \cdot \eta_{d,j}, \quad \quad g\in H_d,
\end{gather*}
as well as the linearity and boundedness of the operators 
$P_I^H, P_I^G$ and $S_d$ we can extend the relation \link{commute}
from $E_d$ to the whole space $H_d$.

\textit{Step 2}. Now let $f\in P_I^H(H_d)$ and let $Af$ denote 
an arbitrary approximation of $S_d f$.
Then $S_d f = S_d(P_I^H f) = P_I^G(S_d f)$, due to Step 1.
Using the fact that $P_I^G$ provides an orthogonal projection 
onto $P_I^G(G_d)$, see \link{orth_decomp}, we obtain \link{bestapprox},
\begin{eqnarray*}
				\norm{S_d f - Af \sep G_d}^2 
				&=& \norm{P_I^G (S_d f) - [P_I^G (Af) + (\id^G - P_I^G)(Af)] \sep G_d}^2 \\
				&=& \norm{P_I^G (S_d f - Af) \sep G_d}^2 + \norm{(\id^G - P_I^G)(Af) \sep G_d}^2 \\
				&=& \norm{S_d f - P_I^G(Af) \sep G_d}^2 + \norm{Af - P_I^G(Af) \sep G_d}^2,
\end{eqnarray*}
as claimed.
\end{proof}


\subsection*{A self-contained proof of \autoref{theo_opt_algo} in \autoref{sect_OptAlgos}}


In order to deduce a lower bound on the $n$-th minimal error of
approximating $S_d$ on (anti-) symmetric subspaces 
$P_I(H_d)$, where $P\in\{\AI,\SI\}$, let us define 
the classes of algorithms under consideration.

An algorithm $A_{n,d}$ for $S_d\colon F_d=P_I(H_d)\nach G_d$ 
which uses $n$ pieces of information is 
modeled as a mapping $\phi \colon \R^n \nach G_d$ 
and a function $N \colon F_d \nach \R^n$ such that 
$A_{n,d} = \phi \circ N$. 
In detail, the information map $N$ is given by
\begin{gather}\label{non-adapt}
        N(f)=\left( L_1(f), L_2(f), \ldots, L_n(f) \right), \qquad f\in F_d,
\end{gather}
where $L_j \in \Lambda$. Here we distinguish certain classes of 
information operations $\Lambda$. In one case we assume that we can 
compute continuous linear functionals. 
Then $\Lambda=\Lambda^{\rm all}$ coincides with~$F_d^*$, 
the dual space of $F_d$.
If $L_v$ depends continuously on $f$ but is not necessarily linear 
the class is denoted by $\Lambda^{\rm cont}$. 
Note that in both the cases also $N$ is continuous 
and we obviously have $\Lambda^{\rm all} \subset \Lambda^{\rm cont}$. 

Furthermore, we distinguish between \textit{adaptive} and \textit{non-adaptive} 
algorithms. The latter case is described above in formula \link{non-adapt}, 
where $L_v$ does not depend on the previously computed values 
$L_1(f),\ldots,L_{v-1}(f)$. 
In contrast, we also discuss algorithms of the form $A_{n,d}=\phi \circ N$ with
\begin{gather*}
        N(f)= \left( L_1(f), L_2(f;y_1), \ldots, L_n(f;y_1,\ldots,y_{n-1})\right), \qquad f\in F_d,
\end{gather*}
where $y_1=L_1(f)$ and $y_v=L_v(f;y_1,\ldots,y_{v-1})$ for $v=2,3,\ldots,n$. 
If $N$ is adaptive we restrict ourselves to the case where $L_v$ depends 
linearly on $f$, \ie
$L_v(\,\cdot\,; y_1,\ldots,y_{v-1}) \in \Lambda^{\rm all}$.

In all cases of information maps, the mapping $\phi$ can be chosen 
arbitrarily and is not necessarily linear or continuous. 
The smallest class of algorithms under consideration is the class 
of linear, non-adaptive algorithms of the form
\begin{gather*}
        A_{n,d}f=\sum_{v=1}^n L_v(f) \cdot g_v,
\end{gather*}
with some $g_v \in G_d$ and $L_v \in \Lambda^{\rm all}$.
We denote the class of all such algorithms by $\A_{n}^{\rm lin}$. 
On the other hand, the most general classes consist of algorithms 
$A_{n,d}=\phi \circ N$, where $\phi$ is arbitrary and $N$ either uses 
non-adaptive continuous or adaptive linear information. 
We denote the respective classes by $\A_n^{\rm cont}$ and $\A_n^{\rm adapt}$.

For the proof that the upper bound given in \autoref{theo_upperbound} is sharp 
we use a generalization of Lemma 1 in W.~\cite{W11}.

\begin{lemma}\label{needed_lemma}
				Suppose $S$ to be a homogeneous operator between linear normed spaces $X$ and $Y$,
				\ie$S(\alpha x)=\alpha S(x)$ for all $x\in X$ and $\alpha\in\R$. 
				Furthermore, assume that $V \subset X$ is a linear subspace with dimension $m$ and 
				there exists a constant $a \geq 0$ such that
				\begin{gather*}
								a \cdot \norm{f \sep X} \leq \norm{S(f) \sep Y} \quad \text{for all} \quad f\in V.
				\end{gather*}
				Then for every $n<m$ and every algorithm $A_n \in \A_n^{\rm cont} \cup \A_n^{\rm adapt}$
				\begin{gather*}
								e^{\rm wor}(A_n; S, X) = \sup_{f\in \B(X)} \norm{S(f) - A_n(f) \sep Y} \geq a.
				\end{gather*}
\end{lemma}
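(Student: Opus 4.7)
The plan is to exhibit, for every admissible algorithm $A_n$, a vector $f \in V$ with $\norm{f \sep X} = 1$ on which the algorithm cannot distinguish between $f$ and $-f$, i.e. $A_n(f) = A_n(-f)$. Once such an $f$ is found, I would combine the homogeneity $S(-f) = -S(f)$ with the triangle inequality: since both $\pm f$ lie in $\B(X)$ and satisfy $a \leq \norm{S(f) \sep Y}$ by hypothesis,
\begin{align*}
 2a &\leq 2\, \norm{S(f) \sep Y} = \norm{S(f) - S(-f) \sep Y} \\
    &\leq \norm{S(f) - A_n(f) \sep Y} + \norm{A_n(-f) - S(-f) \sep Y} \leq 2\, e^{\rm wor}(A_n; S, X),
\end{align*}
which is exactly the asserted lower bound. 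So everything reduces to producing the indistinguishable $f$ in each of the two information regimes.

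For non-adaptive information in $\Lambda^{\rm cont}$ I would consider the continuous odd map $g\colon f\mapsto N(f)-N(-f)$ restricted to the $X$-unit sphere of the $m$-dimensional subspace $V$. This sphere is homeomorphic to $S^{m-1}$ and $g$ maps into $\R^n$ with $n<m$. The Borsuk--Ulam theorem — in the form that any continuous odd map from $S^{m-1}$ into $\R^n$ with $n < m$ must vanish at some point — produces a unit vector $f\in V$ with $N(f)=N(-f)$, and hence $A_n(f)=\phi(N(f))=\phi(N(-f))=A_n(-f)$.

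For adaptive linear information a purely linear-algebraic argument suffices. Setting $L_1'(\,\cdot\,)=L_1(\,\cdot\,)$ and $L_v'(\,\cdot\,)=L_v(\,\cdot\,;0,\ldots,0)$ for $v\geq 2$, each $L_v'$ is a continuous linear functional on $X$, so their common kernel intersected with $V$ has codimension at most $n$ in $V$ and therefore dimension at least $m-n\geq 1$. Picking a unit vector $f$ in this intersection and arguing by induction on $v$, the adaptive information stream evaluated at $f$ is identically zero; by linearity of each $L_v$ in its first argument the same holds for $-f$, so $N(f)=N(-f)=0$ and $A_n(f)=A_n(-f)$.

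The only non-routine ingredient is the topological step for $\Lambda^{\rm cont}$; the adaptive linear case is elementary because continuity of $\phi$ and of the post-adaptive functionals is never used — the linearity of each $L_v$ in the argument $f$ alone suffices. I therefore expect the Borsuk--Ulam application to be the main obstacle to making the argument fully rigorous, and I would phrase the proof so that the triangle-inequality reduction is shared between the two regimes and presented first.
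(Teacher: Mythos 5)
Your proof is correct and follows essentially the same route as the paper's: produce a unit $f^*\in V$ with $A_n(f^*)=A_n(-f^*)$, then combine the homogeneity of $S$ with the triangle inequality to get the lower bound $a$. The only difference is that the paper delegates the existence of such an $f^*$ to a citation (``well-known'', referring to W.~\cite[Lemma 1]{W11}), whereas you supply that step in full — Borsuk--Ulam for non-adaptive $\Lambda^{\rm cont}$ information and a kernel-dimension count for adaptive linear information — which is precisely the content of the cited result.
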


\begin{proof}
It is well-known that for $A_n=\phi \circ N$ with $n<m$ there exists $f^* \in V$ 
such that $N(f^*)=N(-f^*)$ and $\norm{f\sep X}=1$. Thus, $A_n(f^*)=A_n(-f^*)$.
For a more detailed view, see, W.~\cite[Lemma 1]{W11} and the references in there.
Using the triangle inequality for $Y$ we obtain
\begin{eqnarray*}
				e^{\rm wor}(A_n; X) 
				&\geq& \max{\norm{S(\pm f^*)-A_n(\pm f^*) \sep Y}} = \max{\norm{S(f^*) \pm A_n(f^*) \sep Y}} \\
				&\geq& \frac{1}{2} (\norm{ S(f^*) + A_n(f^*) \sep Y} + \norm{ S(f^*) - A_n(f^*) \sep Y}) \\
				&\geq& \frac{1}{2} \norm{2 \cdot S(f^*) \sep Y} \geq a \norm{f^* \sep X} = a
\end{eqnarray*}
and the proof is complete.
\end{proof}

Now let $X=P_I(H_d)$ and $Y=G_d$.
Furthermore, for a given $n\in\N_0$, 
define $a=\sqrt{\lambda_{d,\psi(n+1)}}$ and consider 
$V = \spann{\xi_{\psi(1)},\ldots, \xi_{\psi(n+1)}} \subset P_I(H_d)$.
Then, obviously, $\dim V = n+1=m>n$ .
With the representation~\link{Sdf} and formula \link{Sdxi_orth} 
from the proof of \autoref{theo_upperbound} we conclude
\begin{gather*}
				\norm{S_d f \sep G_d}^2 
				= \sum_{v=1}^{n+1} \distr{f}{\xi_{\psi(v)}}^2 \cdot \lambda_{d,\psi(v)} 
				\geq \lambda_{d,\psi(n+1)} \sum_{v=1}^{n+1} \distr{f}{\xi_{\psi(v)}}^2 
				= a^2 \norm{f \sep H_d}^2, \quad f\in V,
\end{gather*}
where we used the monotonicity of 
$\{ \lambda_{d,\psi(v)} \}_{v\in\N}$ and Parseval's identity.
This leads to the desired lower bound result:

\begin{prop}[Lower bound]
				Under the assumptions of \autoref{theo_opt_algo}
				the $n$-th minimal error with respect to the class 
				$\A_n^{\rm cont} \cup \A_n^{\rm adapt}$ is bounded from below by
				\begin{gather*}
								e(n,d; P_I(H_d)) 
								= \inf_{A_{n,d}} e^{\rm wor}(A_{n,d}; P_I(H_d)) 
								\geq \sqrt{\lambda_{d,\psi(n+1)}} 
								\quad \text{for all} \quad d\in\N, n\in\N_0.
				\end{gather*}
\end{prop}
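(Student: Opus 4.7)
The plan is to apply \autoref{needed_lemma} to the restricted operator $S_d\colon P_I(H_d) \nach G_d$ with a cleverly chosen finite-dimensional test subspace. Concretely, I would set $X=P_I(H_d)$, $Y=G_d$, take the constant $a=\sqrt{\lambda_{d,\psi(n+1)}}$, and choose
\begin{gather*}
V = \spann{\xi_{\psi(1)},\ldots,\xi_{\psi(n+1)}},
\end{gather*}
where the $\xi_{\psi(v)}$ are the ONB eigenelements from \autoref{theo_opt_algo}. Then $\dim V = n+1 > n$ and $S_d$ is linear (hence homogeneous), so the abstract lemma applies provided I can verify the norm lower bound $a \norm{f \sep X} \leq \norm{S_d f \sep Y}$ on $V$.

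To check this bound, I would expand an arbitrary $f\in V$ in the orthonormal system $\{\xi_{\psi(v)}\}_{v=1}^{n+1}$ and invoke the already-established orthogonality relation $\distr{S_d \xi_{\psi(v)}}{S_d \xi_{\psi(w)}}_{G_d} = \lambda_{d,\psi(v)} \delta_{v,w}$ from~\link{Sdxi_orth}, together with Parseval's identity. The key is then the non-increasing ordering induced by the rearrangement $\psi$: for every $v \leq n+1$ one has $\lambda_{d,\psi(v)} \geq \lambda_{d,\psi(n+1)} = a^2$, which yields
\begin{gather*}
\norm{S_d f \sep G_d}^2
= \sum_{v=1}^{n+1} \distr{f}{\xi_{\psi(v)}}_{H_d}^2 \cdot \lambda_{d,\psi(v)}
\geq a^2 \sum_{v=1}^{n+1} \distr{f}{\xi_{\psi(v)}}_{H_d}^2
= a^2 \norm{f \sep H_d}^2,
\end{gather*}
as required.

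The hypotheses of \autoref{needed_lemma} are now satisfied, so the lemma hands us $e^{\rm wor}(A_{n,d}; P_I(H_d)) \geq a = \sqrt{\lambda_{d,\psi(n+1)}}$ for every $A_{n,d}\in \A_n^{\rm cont}\cup \A_n^{\rm adapt}$. Taking the infimum over all such algorithms gives the claimed lower bound on the $n$-th minimal error. The main conceptual work has already been done in the abstract lemma, where the standard volume/parity trick produces an $f^*\in V$ with $N(f^*)=N(-f^*)$; consequently no obstacle remains here beyond checking that our specific $(X,Y,V,a)$ fit the lemma's format, which the computation above does. A minor point worth emphasizing in the write-up is that homogeneity (rather than full linearity) of $S_d$ suffices, so the argument really does cover the nonlinear algorithms in $\A_n^{\rm cont}$ as well as the adaptive linear ones in $\A_n^{\rm adapt}$.
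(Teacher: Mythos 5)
Your proposal matches the paper's own proof in every essential respect: the same instantiation of \autoref{needed_lemma} with $X=P_I(H_d)$, $Y=G_d$, $a=\sqrt{\lambda_{d,\psi(n+1)}}$, and $V=\spann{\xi_{\psi(1)},\ldots,\xi_{\psi(n+1)}}$, followed by the same verification of the norm lower bound via formula~\link{Sdxi_orth}, Parseval's identity, and the monotonicity from $\psi$. The argument is correct and complete.
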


Hence, this together with \autoref{theo_upperbound} shows that $A^*_{n,d}$ 
given in \link{opt_algo} is $n$-th optimal with respect to the class
$\A_n^{\rm cont} \cup \A_n^{\rm adapt}$ as claimed in \autoref{theo_opt_algo}.


\subsection*{Proof of \autoref{prop_NW} in \autoref{sect_prelim}}


\begin{proof}
If the problem is polynomially tractable then there exist constants $C,p>0$ and $q\geq0$ such that
for all $d\in \N$ and 
$\epsilon \in ( 0, 1]$
\begin{gather*}
				n(\epsilon,d) = n(\epsilon,d; F_d) \leq C \cdot \epsilon^{-p} \cdot d^q.
\end{gather*}
Here, $\epsilon_d^{\rm init} = e(0,d)=\sqrt{\lambda_{d,1}}>0$ denotes the initial error of $T_d$.
Since $e(n,d)=\sqrt{\lambda_{d,n+1}}$ it is $n(\epsilon,d)= \# \{i\in\N \sep \lambda_{d,i} > \epsilon^2\}$ 
and therefore $\lambda_{d, n(\epsilon,d)+1} \leq \epsilon^2$. The non-increasing ordering of $(\lambda_{d,i})_{i\in\N}$
implies
\begin{gather*}
				\lambda_{d, \floor{C\epsilon^{-p}d^q}+1} \leq \epsilon^2.
\end{gather*}
If we set $i=\floor{C\cdot \epsilon^{-p} \cdot d^q}+1$ and vary 
$\epsilon \in (0,1]$ 
then $i$ takes the values 
$\floor{C \cdot d^q}+1$, 
$\floor{C\cdot d^q}+2$, and so forth. 
On the other hand, we have $i \leq C\epsilon^{-p}d^q+1$, 
which is equivalent to $\epsilon^2 \leq (Cd^q/(i-1))^{2/p}$ if $i\geq 2$.
Thus,
\begin{gather*}
				\lambda_{d,i} \leq \lambda_{d, n(\epsilon,d)+1} \leq \epsilon^2 \leq \left( \frac{Cd^q}{i-1} \right)^{2/p} 
				\quad \text{for all} \quad i \geq \max{2,\floor{C\cdot d^q}+1}.
\end{gather*}
Choosing $\tau \geq 0$ and 
$f(d) = \ceil{(1+C)\cdot d^q} \geq \max{2,\floor{C\cdot d^q}+1}$
we conclude
\begin{gather*}
				\sum_{i = f(d)}^\infty \lambda_{d,i}^\tau 
				\leq \sum_{i = f(d)}^\infty \left( \frac{Cd^q}{i-1} \right)^{2\tau/p} 
				= (Cd^q)^{2\tau/p} \sum_{i = f(d)-1}^\infty \frac{1}{i^{2\tau/p}} 
				\leq (Cd^q)^{2\tau/p} \cdot \zeta\left(\frac{2\tau}{p}\right),
\end{gather*}
where $\zeta$ denotes the Riemann zeta function. 
In other words, if $\tau > p/2>0$ then
\begin{gather*}
				\frac{1}{d^{2q/p}} \left( \sum_{i = f(d)}^\infty \lambda_{d,i}^\tau \right)^{1/\tau} 
				\leq C^{2/p} \cdot \zeta\left(\frac{2\tau}{p}\right)^{1/\tau}<\infty
				\quad \text{for every} \quad d\in\N.
\end{gather*}
Setting $r = 2q/p$ proves the assertion, as well as the claimed bound on $C_\tau$. 

Conversely, assume now that \link{sup_condition} holds with
\begin{gather*}
				f(d)
				=\ceil{C\cdot \left(\min{\epsilon_d^{\rm init},1}\right)^{-p}\cdot d^{q}}
				\quad \text{where} \quad C>0 \quad \text{and} \quad p,q \geq 0.
\end{gather*}
That is, for some $r\geq 0$ and $\tau>0$ we have
\begin{gather*}
				0 < C_2 = \sup_{d\in\N} \frac{1}{d^r} \left( \sum_{i=f(d)}^\infty \lambda_{d,i}^\tau \right)^{1/\tau} < \infty.
\end{gather*}
For $n \geq f(d)$, the ordering of $(\lambda_{d,i})_{i\in\N}$ implies $\sum_{i=f(d)}^n \lambda_{d,i}^\tau \geq \lambda_{d,n}^\tau \cdot (n - f(d)+1)$. Hence,
\begin{gather*}
				\lambda_{d,n} \cdot (n - f(d)+1)^{1/\tau} \leq \left( \sum_{i=f(d)}^n \lambda_{d,i}^\tau \right)^{1/\tau} \leq \left( \sum_{i=f(d)}^\infty \lambda_{d,i}^\tau \right)^{1/\tau} \leq C_2 \, d^r,
\end{gather*}
or, respectively, $\lambda_{d,n+1} \leq C_2 \, d^r \cdot ((n+1) - f(d)+1)^{-1/\tau}$, for all $n\geq f(d)-1$. 
Note that for $\epsilon \in (0, \min{\epsilon_d^{\rm init},1}]$ we have 
$C_2 \, d^r \cdot ((n+1) - f(d)+1)^{-1/\tau} \leq \epsilon^2$
if and only if
\begin{gather*}
				n \geq n^* = \ceil{\left( \frac{C_2 \, d^r}{\epsilon^2} \right)^\tau }+ f(d)-2.
\end{gather*}
In particular, it is $\lambda_{d,n+1} \leq \epsilon^2$ at least for $n\geq \max{n^*,f(d)-1}$.
Therefore, for every $d\in\N$ and for all $\epsilon \in (0, \min{\epsilon_d^{\rm init},1}]$ it is
\begin{align*}
				n(\epsilon,d;F_d) 
				&\leq \max{n^*,f(d)-1} 
				\leq f(d)-1 + \left( \frac{C_2 \, d^r}{\epsilon^2} \right)^\tau \\
				&\leq C \cdot \left(\min{\epsilon_d^{\rm init},1}\right)^{-p}\cdot d^{q} + C_2^\tau\, \epsilon^{-2\tau} \, d^{r\tau}\\
				&\leq (C+C_2^\tau) \cdot \epsilon^{-\max{p,2\tau}} \cdot d^{\max{q, r\tau}}.
\end{align*} 
Hence, the problem is polynomially tractable.
\end{proof}


\subsection*{An explicit proof of \autoref{lemma_symNEW} in \autoref{sect_symprob}}


\begin{proof}
\textit{Step 1}. By induction on $s$ we first show for every fixed $m\in\N$
\begin{gather}\label{case_mNEW}
				\sum_{\substack{k\in\N^s,\\m\leq k_1\leq\ldots\leq k_s}} \mu_{s,k}
				= \mu_m^s + \sum_{l=1}^s \mu_m^{s-l}\sum_{ \substack{ j^{(l)}\in\N^l,\\m+1\leq j_1^{(l)}\leq\ldots\leq j_l^{(l)} } } \mu_{l,j^{(l)}}
				\quad \text{for all} \quad s\in\N.
\end{gather}
Easy calculus shows that this holds at least for the initial step $s=1$.
Therefore, assume~\link{case_mNEW} to be true for some $s\in\N$.
Then
\begin{align*}
				\sum_{\substack{k\in\N^{s+1},\\m\leq k_1\leq\ldots\leq k_{s+1}}} \mu_{s+1,k}
				&= \sum_{k_1=m}^\infty \mu_{k_1} \sum_{\substack{h\in\N^{s},\\k_1\leq h_1\leq\ldots\leq h_{s}}} \mu_{s,h}
				= \mu_m \sum_{\substack{h\in\N^{s},\\m\leq h_1\leq\ldots\leq h_{s}}} \mu_{s,h} + \sum_{k_1=m+1}^\infty \mu_{k_1} \sum_{\substack{h\in\N^{s},\\k_1\leq h_1\leq\ldots\leq h_{s}}} \mu_{s,h} \\
				&= \mu_m \sum_{\substack{h\in\N^{s},\\m\leq h_1\leq\ldots\leq h_{s}}} \mu_{s,h} + \sum_{\substack{k\in\N^{s+1},\\m+1\leq k_1\leq\ldots\leq k_{s+1}}} \mu_{s+1,k}
\end{align*}
Now, by inserting the induction hypothesis for the first sum and 
renaming $k$ to $j^{(s+1)}$ in the remaining sum, we conclude
\begin{gather*}
				\sum_{\substack{k\in\N^{s+1},\\m\leq k_1\leq\ldots\leq k_{s+1}}} \mu_{s+1,k}
				= \mu_m^{s+1} + \sum_{l=1}^s \mu_m^{s+1-l}\sum_{ \substack{ j^{(l)}\in\N^l,\\m+1\leq j_1^{(l)}\leq\ldots\leq j_l^{(l)} } } \mu_{l,j^{(l)}} + \sum_{\substack{j^{(s+1)}\in\N^{s+1},\\m+1\leq j^{(s+1)}_1\leq\ldots\leq j^{(s+1)}_{s+1}}} \mu_{s+1,j^{(s+1)}}.
\end{gather*}
Hence, \link{case_mNEW} also holds for $s+1$ and the induction is complete.

\textit{Step 2}. Here we prove \link{estimate_VNEW} via another induction on $V\in\N_0$.
Therefore, let $d\in\N$ be fixed arbitrarily.
The initial step, $V=0$, corresponds to \link{case_mNEW} for $s=d$ and $m=1$.
Thus, assume \link{estimate_VNEW} to be true for some fixed $V\in\N_0$.
Then it is
\begin{align*}
					\sum_{\substack{k\in\N^d,\\1\leq k_1\leq\ldots\leq k_d}} \mu_{d,k}
					&\leq \mu_1^d \, d^V \left( 1 + V + \sum_{L=1}^d  \mu_1^{-L} \sum_{ \substack{ j^{(L)}\in\N^L,\\V+2\leq j_1^{(L)}\leq\ldots\leq j_L^{(L)} } } \mu_{L,j^{(L)}} \right) \\
					&= \mu_1^d \, d^V \left( 1 + V + \sum_{L=1}^d \mu_1^{-L} \left( \mu_{V+2}^L + \sum_{l=1}^L \mu_{V+2}^{L-l} \sum_{ \substack{ j^{(l)}\in\N^l,\\(V+2)+1\leq j_1^{(l)}\leq\ldots\leq j_l^{(l)} } } \mu_{l,j^{(l)}} \right) \right),
\end{align*}
using \link{case_mNEW} for $s=L$ and $m=V+2$.
Now we estimate $1+V$ by $d(1+V)$, take advantage of the non-increasing ordering of $(\mu_m)_{m\in\N}$ and extend the inner sum from $L$ to $d$
in order to obtain
\begin{align*}
					\sum_{\substack{k\in\N^d,\\1\leq k_1\leq\ldots\leq k_d}} \mu_{d,k}
					\leq \mu_1^{d} \, d^{V+1} \left( 1 + (V + 1) + \sum_{l=1}^d \mu_1^{-l} \sum_{ \substack{ j^{(l)}\in\N^l,\\(V+1)+2\leq j_1^{(l)}\leq\ldots\leq j_l^{(l)} } } \mu_{l,j^{(l)}} \right).
\end{align*}
Since this estimate corresponds to \link{estimate_VNEW} for $V+1$ the claim is proven.
\end{proof}


\subsection*{Proof of \autoref{theo_exact} in \autoref{sect_asy_abs}}


\begin{proof}
Note that due to $\lim_{m\nach \infty} \lambda_m=0$ the quantity $i_d$ is well defined, because $(\alpha_i)_{i\in\N}$ 
is a non-increasing sequence which tends to zero for $i$ tending to infinity, and $i_d(\delta^2)>1$
for $\delta < \epsilon_d^{\rm init}$.
Furthermore, we have 
\begin{gather*}
			i_1(\delta^2) = \#\{m\in\N \sep \lambda_m > \delta^2\} + 1 = n^{\rm ent}(\delta,1) + 1 =  n^{\rm asy}(\delta,1) +1
\end{gather*}
and if $d\geq 2$ we can rewrite $i_d$ to obtain
\begin{gather*}
				i_d(\delta^2) = \min{i \in \N \sep \lambda_{i+1} \cdot \ldots \cdot \lambda_{i+d-1} \leq \frac{1}{\lambda_i} \delta^2}.
\end{gather*}
Hence, for every $k=(k_1,\ldots,k_{d-1}) \in \nabla_{d-1}$ with $k_1 > i_d(\delta^2)$ it is
\begin{gather*}
				\lambda_{d-1,k}=\lambda_{k_1}\cdot\ldots\cdot\lambda_{k_{d-1}} \leq \lambda_{i_d(\delta^2)+1}\cdot\ldots\cdot\lambda_{i_d(\delta^2)+d-1} \leq \frac{1}{\lambda_{i_d(\delta^2)}} \delta^2
\end{gather*}
or, equivalently, 
\begin{gather*}
				\left\{k \in \nabla_{d-1} \sep i < k_1 \text{ and } \lambda_{d-1,k} > \frac{1}{\lambda_i} \delta^2 \right\} = \leer \quad \text{for all} \quad i \geq i_d(\delta^2).
\end{gather*}
This leads to the disjoint decomposition of
\begin{eqnarray*}
				\{j \in \nabla_d \sep \lambda_{d,j} > \delta^2\} 
				&=& \left\{ j=(i,k) \in \N\times\nabla_{d-1} \sep i < k_1 \text{ and } \lambda_{d-1,k} > \frac{1}{\lambda_i} \delta^2 \right\} \\
				&=& \bigcup_{i=1}^{i_d(\delta^2)-1} \left\{ (i,k) \sep k\in\nabla_{d-1} \text{ such that } i<k_1 \text{ and } \lambda_{d-1,k} > \frac{1}{\lambda_{i}} \delta^2 \right\}.
\end{eqnarray*}
Therefore, the information complexity of the $d$-variate problem is given by
\begin{eqnarray*}
				n^{\rm asy}(\epsilon,d) 
				&=& \# \{ j \in \nabla_d \sep \lambda_{d,j}>\epsilon^2\}
				= \sum_{i=1}^{i_d(\epsilon^2)-1} \# \left\{ k \in \nabla_{d-1} \sep i<k_1 \text{ and } \lambda_{d-1,k} > \frac{1}{\lambda_i} \epsilon^2 \right\} \\
				&=& \sum_{l_1=2}^{i_d(\epsilon^2)} \# \left\{ k \in \nabla_{d-1} \sep l_1 \leq k_1 \text{ and } \lambda_{d-1,k} > \frac{1}{\lambda_{l_1-1}} \epsilon^2 \right\}.
\end{eqnarray*}
Obviously, for fixed $l_1 \in \{2,\ldots,i_d(\epsilon^2)\}$, we can repeat this procedure and obtain
\begin{align*}
				&\# \{ j \in \nabla_{d-1} \sep l_1 \leq j_1 \text{ and } \lambda_{d-1,j}>\delta^2\} \\
				&\qquad\qquad = \sum_{l_2=l_1+1}^{i_{d-1}(\delta^2)} \# \left\{ k \in \nabla_{d-2} \sep l_2 \leq k_1 \text{ and } \lambda_{d-2,k} > \frac{1}{\lambda_{l_2-1}} \delta^2 \right\},
\end{align*}
if $d>2$ and $\delta^2 = \epsilon^2 / \lambda_{l_1-1}$. Note that $\epsilon < \epsilon_d^{\rm init}$ 
implies $i_{d-1}(\delta^2)\geq l_1+1$ such that  $\{l_1+1,\ldots,i_{d-1}(\delta)^2\} \neq \leer$.
Iterating the argument we get
\begin{align*}
				&n^{\rm asy}(\epsilon,d) \\
				&\qquad = \sum_{l_1=2}^{i_d(\epsilon^2)} \sum_{l_2=l_1+1}^{i_{d-1}(\epsilon^2 / \lambda_{l_1-1})} \ldots \sum_{l_{d-1}=l_{d-2}+1}^{i_{2}(\epsilon^2 / [\lambda_{l_1-1}\cdot\ldots\cdot\lambda_{l_{d-2}-1}])} 
				\# \left\{ k \in \nabla_{1} \sep l_{d-1} \leq k_1 \text{ and } \lambda_{1,k} > \frac{1}{\lambda_{l_{d-1}-1}}\delta^2 \right\}
\end{align*}
with $\delta^2 = \epsilon^2 / [\lambda_{l_1-1} \cdot\ldots\cdot\lambda_{l_{d-2}-1}]$. 
It remains to calculate the cardinality of the last set.
Of course, we have
\begin{align*}
			&\left\{ k \in \nabla_{1} \sep l_{d-1} \leq k_1 \text{ and } \lambda_{1,k} > \frac{1}{\lambda_{l_{d-1}-1}} \delta^2 \right\} \\
			&\qquad\qquad= \left\{ k \in \N \sep l_{d-1} \leq k \text{ and } \lambda_{k} > \frac{1}{\lambda_{l_{d-1}-1}} \delta^2 \right\}\\
			&\qquad\qquad= \left\{ k \in \N \sep \lambda_{k} > \frac{1}{\lambda_{l_{d-1}-1}} \delta^2 \right\} \setminus \left\{ k \in \{1,\ldots,l_{d-1}-1\} \sep \lambda_{k} > \frac{1}{\lambda_{l_{d-1}-1}} \delta^2 \right\}.
\end{align*}
The first of these sets in the last line contains exactly 
$n^{\rm ent}(\delta/\sqrt{\lambda_{l_{d-1}-1}}, 1)$ elements.
On the other hand, if $k\leq l_{d-1} \leq i_2(\delta^2)$ then 
\begin{gather*}
				\lambda_k\lambda_{l_{d-1}-1}\geq \lambda_{i_2(\delta^2)} \lambda_{i_2(\delta^2)-1} > \delta^2,
\end{gather*}
where the last inequality holds due to the definition of $i_2(\delta^2)$. 
Therefore, the last set coincides with $\{1,\ldots,l_{d-1}-1\}$ 
and its cardinality is equal to $l_{d-1}-1$.
Furthermore, note that the estimate also shows that 
$n^{\rm ent}(\delta/\sqrt{\lambda_{l_{d-1}-1}}, 1)$
is at least equal to $l_{d-1}$.
Thus,
\begin{gather*}
				\# \left\{ k \in \nabla_{1} \sep l_{d-1} \leq k_1 \text{ and } \lambda_{1,k} > \frac{1}{\lambda_{l_{d-1}-1}} \delta^2 \right\} 
				= n^{\rm ent}\left( \delta/\sqrt{\lambda_{l_{d-1}-1}}, 1\right) - l_{d-1} + 1 \geq 1
\end{gather*}
and the proof is complete.
\end{proof}


\subsection*{Proof of \autoref{prop_NW2} in \autoref{sect_normkrit}}


One possibility to prove the second point of \autoref{prop_NW2} 
is to apply \autoref{prop_NW} to a scaled problem 
$\{\tilde{T}_d\}$ such that $\tilde{W_d}={ \tilde{T}_d }^{\dagger} \tilde{T}_d$ 
possesses the eigenvalues $\tilde{\lambda}_{d,i} = \lambda_{d,i}/\lambda_{d,1}$ 
for $i\in\N$. 
Then the initial error of $\tilde{T}_d$ equals $1$ such that 
$f$ in \autoref{prop_NW} does not depend on~$p$.
That is, we can choose $f(d)=\ceil{C\, d^{q}}+1$ for some 
$q\geq 0$ in both the assertions.
In order to see why we even can take $f(d) \equiv 1$ in the first point 
and for the sake of completeness we also add a direct proof 
for this proposition.

\begin{proof}
If $\{T_d\}$ is polynomially tractable with respect to the normalized error criterion
then there exist constants $C,p>0$ and $q\geq0$ such that
for all $d\in \N$ and $\epsilon' \in ( 0, 1 ]$
\begin{gather*}
				n(\epsilon' \cdot \epsilon_d^{\rm init},d) = 
				n(\epsilon' \cdot \epsilon_d^{\rm init},d; F_d) 
				\leq C \cdot \left(\epsilon'\right)^{-p} \cdot d^q.
\end{gather*}
As before the quantity $\epsilon_d^{\rm init} = \sqrt{\lambda_{d,1}}>0$ denotes the initial error of $T_d$ and 
$\epsilon'$ is the (multiplicative) improvement of it.
Since $e(n,d)=\sqrt{\lambda_{d,n+1}}$ it is $n(\epsilon,d)= \# \{i\in\N \sep \lambda_{d,i} > \epsilon^2\}$
where $\epsilon = \epsilon' \cdot \epsilon_d^{\rm init}$. 
Therefore, $\lambda_{d, n(\epsilon' \cdot \epsilon_d^{\rm init},d)+1} \leq \left(\epsilon'\right)^2 \cdot \lambda_{d,1} $. 
Hence, the non-increasing ordering of $(\lambda_{d,i})_{i\in\N}$
implies in this setting
\begin{gather*}
				\lambda_{d, \floor{C \left(\epsilon'\right)^{-p} d^{q}}+1} \leq \left(\epsilon'\right)^2 \cdot \lambda_{d,1}.
\end{gather*}
If we set $i=\floor{C \left(\epsilon'\right)^{-p} d^{q}}+1$ and vary 
$\epsilon' \in (0,1]$ then $i$ takes the
values $\floor{C d^q}+1$, $\floor{C d^q}+2$ and so on.
Again we have $1 \leq i \leq C \left(\epsilon'\right)^{-p}d^q+1$ 
on the other hand, which is equivalent to 
$\left(\epsilon'\right)^2 \leq (Cd^q/(i-1))^{2/p}$ if $i\geq2$.
Thus,
\begin{gather*}
				\lambda_{d,i} 
				\leq \lambda_{d, n(\epsilon' \cdot \epsilon_d^{\rm init},d)+1} 
				\leq \left(\epsilon'\right)^2 \cdot \lambda_{d,1} 
				\leq \left( \frac{Cd^q}{i-1} \right)^{2/p} \cdot \lambda_{d,1} \quad \text{for all} \quad i \geq \max{2, \floor{C d^q}+1}.
\end{gather*}
Choosing $\tau \geq 0$ and $f^{*}(d)= \ceil{(1+C)\, d^q} \geq \max{2, \floor{C d^q}+1}$ we conclude here
\begin{gather*}
				\sum_{i = f^{*}(d)}^\infty \left(\frac{\lambda_{d,i}}{\lambda_{d,1}}\right)^\tau 
				\leq \sum_{i = f^{*}(d)}^\infty \left( \frac{Cd^q}{i-1} \right)^{2\tau/p} 
				\leq (Cd^q)^{2\tau/p} \cdot \zeta\left(\frac{2\tau}{p}\right),
\end{gather*}
where $\zeta$ again is the Riemann zeta function. 
On the other hand, it is obvious that
\begin{gather*}
				\sum_{i = 1}^{f^{*}(d)-1} \left(\frac{\lambda_{d,i}}{\lambda_{d,1}}\right)^\tau \leq f^*(d)-1 \leq (1+C) \, d^{q\cdot 2\tau/p},
\end{gather*}
because $\lambda_{d,i}\leq \lambda_{d,1}$ for all $i\in\N$.
Therefore, if $\tau > p/2$, 
\begin{gather*}
				\frac{1}{d^{2\tau q/p}} \sum_{i = 1}^\infty \left(\frac{\lambda_{d,i}}{\lambda_{d,1}}\right)^\tau
				\leq 1+C + C^{2\tau/p} \cdot \zeta\left(\frac{2\tau}{p}\right)<\infty
\end{gather*}
for all $d\in\N$. This proves the assertion setting $r \geq 2q/p$. 

The proof of the second point again works like for \autoref{prop_NW}.
Assume that \link{sup_condition2} holds with 
$f(d) = \ceil{C \, d^{q}}$, where $C>0$ and $q \geq 0$.
That is, for some $r\geq 0$ and $\tau>0$ we have
\begin{gather*}
				C_\tau = \sup_{d\in\N} \frac{1}{d^r} \left( \sum_{i=f(d)}^\infty \left(\frac{\lambda_{d,i}}{\lambda_{d,1}}\right)^\tau \right)^{1/\tau} < \infty.
\end{gather*}
Since $(\lambda_{d,i})_{i\in\N}$ is assumed to be non-increasing 
the same also holds for the rescaled sequence 
$(\lambda_{d,i} / \lambda_{d,1})_{i\in\N}$ 
such that 
$\sum_{i=f(d)}^n (\lambda_{d,i}/\lambda_{d,1})^\tau \geq (\lambda_{d,n}/\lambda_{d,1})^\tau \cdot (n - f(d)+1)$ 
for $n \geq f(d)$.
Hence,
\begin{gather*}
				\frac{\lambda_{d,n}}{\lambda_{d,1}} \cdot (n - f(d)+1)^{1/\tau} 
				\leq \left( \sum_{i=f(d)}^\infty \left(\frac{\lambda_{d,i}}{\lambda_{d,1}}\right)^\tau \right)^{1/\tau} 
				\leq C_\tau \, d^r,
\end{gather*}
or, respectively, 
$\lambda_{d,n+1} \leq C_\tau \, d^r \cdot ((n+1) - f(d)+1)^{-1/\tau} \cdot \lambda_{d,1}$ 
for all $n\geq f(d)-1$. 
As before we have 
$C_\tau \, d^r \cdot ((n+1) - f(d)+1)^{-1/\tau} \leq \left(\epsilon'\right)^2$, for $\epsilon' \in (0, 1]$,
if and only if
\begin{gather*}
				n \geq n^{*} = \ceil{\left( \frac{C_\tau \, d^r}{\left(\epsilon'\right)^2} \right)^\tau }+ f(d)-2.
\end{gather*}
In particular, $\lambda_{d,n+1} \leq \left(\epsilon'\right)^2\cdot \lambda_{d,1}$ 
at least for $n\geq \max{n^{*},f(d)-1}$.
Therefore, we conclude in this setting for all $\epsilon' \in (0, 1]$ and every $d\in\N$
\begin{align*}
				n(\epsilon' \cdot \epsilon_d^{\rm init},d; F_d) 
				&\leq \max{n^{*},f(d)-1} 
				\leq f(d)-1 + \left( \frac{C_\tau \, d^r}{\left( \epsilon'\right)^2} \right)^\tau 
				\leq C\, d^q + C_\tau^\tau \left( \epsilon' \right)^{-2\tau} d^{r\tau} \\
				& \leq (C+C_\tau^\tau) \cdot \left( \epsilon'\right)^{-2\tau} \cdot d^{\max{q, r\tau}}.
\end{align*}
Hence, the problem is polynomially tractable.
Furthermore, strong polynomial tractability 
holds if $r=q=0$.
\end{proof}

\end{document}